\newcommand{\nc}{\newcommand}
 \newcommand{\pol}{{\rm pol\,}}
\nc{\sgn}{{\rm sgn}}
\newcommand{\barA}{{\bar A}}
\newcommand{\barc}{{\bar c}}
\newcommand{\barY}{{\bar Y}}
\newcommand{\cf}{{\rm cf}}
\newcommand{\id}{{\rm id}}
\newcommand{\hate}{{\hat e}}
\newcommand{\Comod}{{\rm Comod}}
\newcommand{\comod}{{\rm comod}}
\newcommand{\Mod}{{\rm Mod}}
 \renewcommand{\l}{{\rm len}}
\newcommand{\K}{\mathbb{K}}
\newcommand{\Z}{\mathcal{Z}}
\newcommand{\zz}{\mathbb{Z}}
\newcommand{\gtdom}{\triangleright}
\newcommand{\ledom}{\trianglelefteq}
\newcommand{\kerp}{\mathcal{K}}
\newcommand{\hecke}{\mathcal{H}}
\newcommand{\tab}{\mathcal{T}}
\newcommand{\bmcom}{\mathcal{C}}
\newcommand{\lng}{\mathrm{lng}}
\newcommand{\ideal}{\mathcal{J}}
\DeclareMathOperator{\Ext}{Ext}
\DeclareMathOperator{\Hom}{Hom}
\DeclareMathOperator{\Tor}{Tor}
\renewcommand{\vert}{{\,|\,}}
\renewcommand{\P}{{\mathcal P}}
\newcommand{\que}{{\mathbb Q}}
\newcommand{\rat}{{\mathbb Q}}
\newcommand{\q}{}
\newcommand{\de}{\delta}
\newcommand{\ep}{{\epsilon}}
\renewcommand{\mod}{{\rm mod}}
\newcommand{\bs}{\bigskip}
\renewcommand{\vert}{\,|\,}
\newcommand{\tbw}{\textstyle\bigwedge}
\newcommand{\zed}{{\mathbb Z}}
\newcommand{\G}{{\mathcal G}}
\renewcommand{\H}{{\mathcal H}}
\newcommand{\F}{{\mathcal  F}}
\nc{\Q}{{\mathcal Q}}
\renewcommand{\Hom}{{\rm Hom}}
\newcommand{\resp}{{\rm resp.\,}}
\newcommand{\ind}{{\rm ind}}
\newtheorem{definition}{Definition}[section]
\newtheorem{proposition}[definition]{Proposition}
\newtheorem{theorem}[definition]{Theorem}
\newtheorem{lemma}[definition]{Lemma}
\newtheorem{corollary}[definition]{Corollary}
\newtheorem{remark}[definition]{Remark}
\newcommand{\si}[1]{\small{\emph{#1}}}
\DeclareRobustCommand{\authorthing}{
\begin{center}
	\begin{tabular}{lll}
		Stephen Donkin\thanks{The first author wishes to thank CMUC for hospitality
in May~2012, when the major part of the research for this paper was done,  and
in  April~2013, when the final version was produced. } & Ana Paula Santana\thanks{This work was partially supported by the Centro de Matem\'atica da
Universidade de Coimbra (CMUC), funded by the European Regional
Development Fund through the program COMPETE and by the Portuguese
Government through the FCT - Funda\c{c}\~ao para a Ci\^encia e a Tecnologia
under the project PEst-C/MAT/UI0324/2011.
} &
		Ivan Yudin\thanksmark{2}\thanksgap{0.3em}\thanks{The third author's work is supported by the FCT Grant
SFRH/BPD/31788/2006.}\\	
\si{Department of}
& \si{CMUC, Department of} & \si{CMUC, Department of}\\
\si{Mathematics}&
\si{Mathematics}&
\si{Mathematics}\\
\si{University of York}
&\si{University of Coimbra}
&\si{University of Coimbra}\\
\si{Heslington, York}
&\si{Coimbra} & \si{Coimbra}\\
\si{UK} & \si{Portugal} &\si{Portugal}\\
\si{stephen.donkin@york.ac.uk} & \si{aps@mat.uc.pt} & \si{yudin@mat.uc.pt}
	\end{tabular}
\end{center}
}
\author{\authorthing}
\title{Homological properties of quantised Borel-Schur algebras and resolutions
of quantised Weyl modules}
\begin{document}
\maketitle
\abstract{We continue the development of the homological theory of quantum
general linear groups previously considered by the first author. The development
is used to transfer information to the representation theory of quantised Schur
algebras. The acyclicity of induction from some rank-one modules for quantised
Borel-Schur subalgebras is deduced. This is used to prove the exactness of the
complexes recently constructed by Boltje and Maisch, giving resolutions of the
co-Specht modules for Hecke algebras.}

 \section{Introduction}

In~\cite{advances} the last two authors constructed characteristic free
projective resolutions  of the Weyl modules for the
classical Schur algebra.
Then, using the Schur functor, obtained resolutions by permutation modules of
the co-Specht modules for the symmetric group. This last result allowed them to
prove Conjecture 3.4 of Boltje and Hartman~\cite{boltjehartmann}. The key
ingredients of~\cite{advances} are the use of the normalised bar resolution in
the context of Borel-Schur algebras and Woodcock's Theorem~\cite{woodcock},
which reduces the construction of projective resolutions for Weyl modules to the
construction of projective resolutions for rank-one modules for the Borel-Schur
algebra.
The original motivation of the present paper was to extend the results
of~\cite{advances} to the context  of quantised Schur algebras and Hecke
algebras. This is easily achieved once one has a quantised version of the
generalization of Woodcock's theorem given in~\cite{advances}. 

Fix positive integers $n$ and $r$,  a commutative ring $R$ and an invertible
element~$q$ in $R$. Consider the quantised Schur algebra $S_{R,q}\left( n,r
\right)$ and the quantised positive Borel-Schur algebra $S^+_{R,q}\left( n,r \right)$. 
For each partition $\lambda=(\lambda_1, \dots, \lambda_n)$ of~$r$ there is a
rank-one module $R_{\lambda}$ for $S^+_{R,q}\left( n,r \right)$. The induced module 
\begin{equation*}
	W^{R,q}_{\lambda} := S_{R,q}\left( n,r \right)\otimes_{S^+_{R,q}\left(
	n,r \right)}
	R_{\lambda}
\end{equation*}
	is the Weyl module associated with $\lambda$.
Following~\cite{advances}, we work in the category of  $S^+_{R,q}\left( n,r
\right)$-modules and use the normalised bar resolution to construct a projective
resolution of $R_{\lambda}$. Next we apply the induction functor $S_{R,q}\left(
n,r \right)\otimes_{S^+_{R,q}\left( n,r \right)}-$\ \ \ to this resolution
 and
 obtain a complex $B^{R,q}_{*,\lambda}$ of finite length 
\begin{equation*}
 \dots \to B^{R,q}_{1,\lambda} \to
 B^{R,q}_{0,\lambda} \to W^{R,q}_\lambda \to 0,
\end{equation*}
where each $B_{k,\lambda}^{R,q}$ is a projective $S_{R,q}(n,r)$-module.

To show that this complex is exact we use Theorem~\ref{mainfirst}, which is the quantised version of  Woodcock's
Theorem. So
$B^{R,q}_{*,\lambda
}$ is a projective resolution of the quantised Weyl  
module $W_\lambda^{R,q}$ and it is simple to see that this resolution is
universal, that is 
\begin{equation*}
	B_{*,\lambda}^{R,q} \cong B_{*, \lambda}^{\Z, t}
	\otimes_{\Z} R, 
\end{equation*}
where $\Z = \zz\left[ t,t^{-1} \right]$ is the universal quantization ring.

Write $\hecke_{R,q}$ for the Hecke algebra over $R$ associated with the
symmetric group $\Sigma_r$. In~\cite{boltjemaish}, Boltje and Maisch
constructed,
for each composition $\lambda = (\lambda_1, \dots, \lambda_n)$ of $r$, a
complex
$\widetilde\bmcom^\lambda_*$ of left $\hecke_{R,q}$-modules and proved that
it
is exact in degrees $0$ and $-1$. Specializing to $q=1$,
$\widetilde\bmcom^\lambda_*$ coincides with the complex  
constructed in~\cite{boltjehartmann}. Suppose that $\lambda$ is a partition
of
$r$. Then the last module in $\widetilde\bmcom^\lambda_*$ is the dual of the
Specht module $S^\lambda$ over $\hecke_{R,q}$. 
It was proved in~\cite{advances} that in this situation upon specializing to
$q=1$ the resulting complex is exact. It is natural to conjecture that the
same
should be true for an arbitrary $q$. 

Returning to our setting, we choose $n\ge r$ and fix $\lambda$ a partition
of
$r$ into at most $n$ parts. We apply the Schur functor
\begin{align*}
	F\colon S_{R,q}(n,r)\mbox{-mod} \to \hecke_{R,q}\mbox{-mod}
\end{align*}
to our resolution $B^{R,q}_{*,\lambda}$ and obtain an exact complex $F\left(
B^{R,q}_{*,\lambda} \right)$ which we prove to be isomorphic to
$\widetilde\bmcom^\lambda_*$. This proves the exactness of
$\widetilde\bmcom^\lambda_*$.

\q We approach the  quantisation of Woodcock's Theorem, as described above,
    via the representation theory of the quantum general linear group~$G(n)$ of
    degree $n$,  introduced in \cite{SD25}.  In fact we take this  opportunity
    to develop the homological theory   previously considered in \cite{SD37}
    and \cite{SD42}.  The focus here is on a comparison between  the homological
    algebra in the category of polynomial modules and in  the full category of
    modules for the quantum group.  We work over an arbitrary field $\K$ and
    non-zero parameter $q\in \K$.

    \q Let $B(n)$ be the negative Borel (quantum) subgroup of $G$. We prove in
    particular that the derived functors of induction $R^i\ind_{B(n)}^{G(n)}$
    take polynomial modules to polynomial modules, Corollary 7.7.  Furthermore we show that if
    $V$ is a homogeneous polynomial $B(n)$-module of degree $r$  then
    $R^i\ind_{B(n)}^{G(n)}V=0$ for all $i>r$, Lemma 6.2. In general the tensor product is
    not commutative in the category of modules for a quantum group. However, we
    show that if  $L$ and $M$ are $B$-modules and $L$ is one dimensional then
    the $B$-module $L\otimes M$ and $M\otimes L$ are isomorphic, Proposition~7.1. Using this
    property and a Koszul resolution we show that the polynomial part of the
    coordinate algebra of  $B(n)$ is acyclic for the induction functor. This
    leads to the fact that the derived functors of induction applied to  a
    polynomial $B(n)$-module are the same whether computed in the polynomial
    category or the full module category, Theorem 7.5.   Kempf's Vanishing
    Theorem for representations of quantum groups,  when expressed in the
    polynomial
    category, is essentially the quantised version of Woodcock's Theorem, over a
    field.  Some further work is needed to expressed this in terms of the
    acyclicity theorem for induction over Schur algebras, over an arbitrary
    coefficient ring, mentioned above, Theorem 8.4.

    \q Though not needed for the application to resolutions we also take the
    opportunity to give the generalisation to the quantum Borel subgroup
    $B(n)$ of another theorem of Woodcock, \cite{WoodcockBielefeld}, Theorem 7
    and \cite{WoodcockComm} (see also \cite{WoodcockMasterpiece} for related
    material obtained by working with global bases). This theorem asserts that
    the extension groups between polynomial $B(n)$-modules of the same degree
    whether calculated in the polynomial category or the full $B(n)$-module
    category are the same, Theorem 5.2. We approach the quantised version by considering the  derived
    functors of the functor $\pol$, which takes a $B(n)$-module to its largest
    polynomial submodule. Though in detail it looks quite different it is in
    spirit rather close to the approach of  \cite{WoodcockBielefeld}, and we
    gratefully acknowledge the influence of this unpublished work.

The organization of the present paper is as follows. We first study the
homological results for quantum $G(n)$ and its negative Borel subgroup. Then we
use this to obtain the quantised version  of Woodcock's Theorem, Theorem~\ref{mainfirst}. 
In the last part of the paper we construct universal projective resolutions for
quantised Weyl modules. Using these resolutions, we prove the exactness of Boltje and Maisch
complexes for dominant weights.
\section{Restriction and induction of comodules}

\rm
\bs

\q We fix a field $\K$. For a vector space $V$  over $\K$ we write $V^*$ for the linear dual $\Hom_\K(V,\K)$ and if $W$ is also a  vector space over $\K$ we write simply $V\otimes W$ for the tensor product $V\otimes_\K W$.  We write $\id_X$ for the identity map on a set $X$.

\q For a coalgebra $A=(A,\de_A,\ep_A)$ over $\K$ we write $\Comod(A)$ for the
category of right $A$-comodules and write $\comod(A)$ for the category of finite dimensional right $A$-comodules.   We recall for future use the definition of the coefficient space of an $A$-comdodule.   Let $V=(V,\tau)$ be a right $A$-comodule and let $\{v_i:i\in I\}$,  be a $\K$-basis of $V$. The coefficient space $\cf(V)$ is the $\K$-span of the elements $f_{ij}\in A$ defined by the equations
$$\tau(v_i)=\sum_{j\in I} v_j\otimes f_{ji}$$
for $i\in I$. (This space is independent  of the choice of basis.  For further properties see \cite{LFR}.)

\q Let $B=(B,\de_B,\ep_B)$ also be a coalgebra and suppose $\phi\colon A\to B$ is a coalgebra map.
 Recall that for $V=(V,\tau)\in \Comod(A)$ we have  
 \begin{equation*} 
  \phi_0(V)=(V,(\id_V\otimes \phi)\circ\tau)\in \Comod(B).
   \end{equation*}
	 If $f\colon V\to V'$ is a morphism of right comodules then  the same
	 map $f\colon V\to V'$ is also a morphism of $B$-comodules. In this way we have an exact functor $\phi_0\colon \Comod(A)\to \Comod(B)$, with $\phi_0(f)=f$, for  $f$ a morphism of $A$-comodules. We call $\phi_0$ the $\phi$-restriction (or just restriction) functor.

\q More interestingly perhaps, we have  the $\phi$-induction functor \\
$\phi^0\colon \Comod(B)\to \Comod(A)$. This is described in  \cite{SD1}, Section 3,   and we briefly recall  the construction and some properties.  
If $X$ is a $\K$-vector space (possibly with extra structure) we write $|X|\otimes A$ for the vector space $X\otimes A$ regarded as an $A$-comodule with structure map $\id_X\otimes \de_A$.
Let  $(W,\mu)\in \Comod(B)$.   The  set  of all $s\in W\otimes A$ such that 
$$(\mu\otimes \id_A)(s)=(\id_W\otimes (\phi\otimes \id_A)\circ \de_A)(s)\in W\otimes B\otimes A$$
 is an $A$-subcomodule of $|W|\otimes A$, which we denote $\phi^0(W)$.  If $f\colon W\to W'$ is a morphism of $B$-comodules  then the map $f\otimes \id_A$ restricts to an  $A$-comodule map  $\phi^0(f)\colon \phi^0(W)\to \phi^0(W')$. In this way we obtain a left exact functor $\phi^0\colon \Comod(B)\to \Comod(A)$.  Let  $V=(V,\lambda)\in \Comod(A)$ and $W=(W,\mu)\in \Comod(B)$. 
 We have a natural isomorphism $ \Hom_B(\phi_0(V),W))\to \Hom_A(V,\phi^0(W))$, taking $\alpha\in \Hom_B(\phi_0(V),W))$ to $\tilde\alpha=(\alpha\otimes \id_A)\circ \lambda$.
 
\q Suppose now that $A$ is finite dimensional. We consider the dual algebra $S=A^*=\Hom_\K(A,\K)$.  Given a right $A$-comodule $V$ with structure map $\tau\colon V\to V\otimes A$ we may also regard $V$ as a left $S$-module with action 
$\alpha v = (\id_V\otimes \alpha) \tau(v)$.  If $\theta\colon  V\to V'$ is a morphism
of right $A$-comodules then, regarding $V$ and $V'$ as left $S$-modules,
$\theta\colon V\to V'$ is also a morphism in the category of left $S$-modules. In this
way we have an equivalence  between the categories of  finite dimensional right
$A$-comodules and  of finite dimensional left $S$-modules.  For finite dimensional right $A$-comodules $V,V'$ this equivalence of categories induces a  $\K$-linear isomorphism $\Ext^i_A(V,V')\to \Ext^i_S(V,V')$ in each degree $i$.

\q If $S$ is a $\K$-algebra and $V$ is a left (resp. right) $A$-module then the linear dual $V^*$ is naturally a right (\resp  left) $S$-module.   Now suppose $\phi\colon A\to B$ is a morphism of finite dimensional $\K$-coalgebras and let $T=B^*$.  The linear dual $\phi^*\colon T\to S$ is a $\K$-algebra map.   Now $A$ is naturally an $(S,S)$-bimodule with left action $\alpha a=(\id_A\otimes \alpha)\de_A(a)$ and right action $a \beta=(\beta\otimes \id_A)\de(a)$, for $a\in A$, $\alpha,\beta\in S$. We view an $S$-module also as a $T$-module via $\phi^*$.

\q We have the natural linear isomorphism  $\eta\colon  V\otimes A\to (V^*\otimes A^*)^*$.   The tensor product $V^*\otimes_T A^*$ is a quotient of $(V^*\otimes A^*)$ and we thus identify $(V^*\otimes_T A^*)^*$ with a subspace of  $(V^*\otimes A^*)^*$. From the definitions one checks  
 that an element $y$ of $V\otimes A$ lies in $\phi^0(V)$ if and only if $\eta(y)$ lies in  $(V^*\otimes_T  A^*)^*$. The map $\eta$ restricts to an isomorphism of left $A$-modules 
 \begin{equation*}
  \phi^0V\to (V^*\otimes_T A^*)^*.
   \end{equation*}
	 It follows that the derived functors of $\phi^0$ are given as follows.
\bs

\begin{proposition}Let $\phi\colon A\to B$ be  a morphism of finite dimensional
	coalgebras over $\K$. Then for $V\in \comod(B)$ we have
$$R^i\phi^0V= (\Tor_i^{B^*}(V^*,A^*))^*$$
for $i\geq 0$.
\end{proposition}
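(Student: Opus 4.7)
The level-zero identification $\phi^0 V \cong (V^*\otimes_T A^*)^*$, established immediately before the statement of the proposition, handles the case $i=0$; the plan is to lift it to arbitrary $i$ by evaluating $\phi^0$ on a carefully chosen injective resolution of~$V$.

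Since $T=B^*$ is a finite-dimensional $\K$-algebra and $V^*$ is a finite-dimensional right $T$-module, choose a resolution $P_\bullet\to V^*$ by finitely generated (hence finite-dimensional) projective right $T$-modules. Applying the exact contravariant functor $(-)^*=\Hom_\K(-,\K)$ yields an exact sequence
\[
0\to V\to P_0^*\to P_1^*\to\cdots.
\]
Over the finite-dimensional algebra $T$, the $\K$-dual of a finitely generated projective right module is an injective left module, so, under the equivalence between $\Comod(B)$ and the category of left $T$-modules recalled earlier, each $P_i^*$ is an injective object in $\Comod(B)$. Consequently $V\to P_\bullet^*$ is an injective resolution of $V$ in $\Comod(B)$, and
\[
R^i\phi^0 V\cong H^i\bigl(\phi^0(P_\bullet^*)\bigr).
\]

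The isomorphism $\phi^0 W\cong(W^*\otimes_T A^*)^*$ is induced by (a restriction of) the natural map $\eta$ described just above the proposition, and is therefore natural in the finite-dimensional comodule~$W$. Degreewise application to $W=P_i^*$, noting that $W^*=P_i$, produces an isomorphism of cochain complexes
\[
\phi^0(P_\bullet^*)\cong (P_\bullet\otimes_T A^*)^*.
\]
Because $\K$-linear duality is exact, it commutes with cohomology on complexes of finite-dimensional vector spaces, so
\[
H^i\bigl((P_\bullet\otimes_T A^*)^*\bigr)\cong\bigl(H_i(P_\bullet\otimes_T A^*)\bigr)^*=\bigl(\Tor_i^{T}(V^*,A^*)\bigr)^*,
\]
which, together with the previous isomorphisms, gives the claim.

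The only delicate point is the bookkeeping with left versus right module structures and the direction of the equivalence $\Comod(B)\simeq\Mod(T)$, combined with the classical fact, needed to turn the dualised projective resolution into an injective resolution, that $(-)^*$ interchanges projectives and injectives over a finite-dimensional algebra. Once these are in place the proposition is a formal consequence of the level-zero identification and the exactness of $\K$-linear duality.
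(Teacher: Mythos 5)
Your proposal is correct and spells out, with the right bookkeeping, exactly the argument the paper leaves implicit when it says ``It follows that the derived functors of $\phi^0$ are given as follows'' right after establishing the degree-zero isomorphism $\phi^0 V\cong(V^*\otimes_T A^*)^*$. The key points you supply --- choosing a finitely generated projective resolution of the right $T$-module $V^*$, using that $\K$-duality over the finite-dimensional algebra $T=B^*$ carries f.g.\ projectives to injectives so that $P_\bullet^*$ is an injective resolution of $V$ in $\Comod(B)$, applying the natural isomorphism degreewise, and then commuting duality past (co)homology --- are precisely what one needs to turn the level-zero identification into the stated formula for $R^i\phi^0 V$.
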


\section{The quantum polynomial algebra in $n^2$ variables}

\q We shall work with the quantum general linear groups  defined in  \cite{SD25}. We briefly recall the construction and some properties, starting with the construction of the quantum polynomial algebra.  We fix $n\geq 1$. Let $R$ be a commutative ring and let $q\in R$. We write $A_{R,q}(n)$ for the $R$-algebra given by generators  $c_{ij}$, $1\leq i,j\leq n$,  and relations:

\begin{align*} c_{ir}c_{is}&=c_{is}c_{ir},  \hbox{ for  } 1\leq i,  r,s\leq n;\cr
c_{jr}c_{is}&=qc_{is}c_{jr}, \hbox{ for } 1\leq i <j \leq n, 1\leq r\leq s < n;\cr
c_{js}c_{ir}&=c_{ir}c_{js}-(q-1)c_{is}c_{jr}, \hbox{ for }  1\leq i<j \leq n, 1\leq r<s\leq n.
\end{align*}

We call the elements  $c_{ij}$ the $(i,j)$ coordinate elements  of $A_{R,q}(n)$.
Since the relations are homogeneous,  $A_{R,q}(n)$ has an $R$-algebra grading 
$A_{R,q}(n)=\bigoplus_{r\geq 0} A_{R,q}(n,r)$ in  which each coordinate element has degree $1$. Then by  \cite{SD25}, Theorem~1.1.8 the elements
\begin{equation*}
	c_{11}^{m_{11}}c_{12}^{m_{12}}\ldots
	c_{1n}^{m_{1n}}c_{21}^{m_{21}}\ldots
	c_{nn}^{m_{nn}},
\end{equation*}
	with  
 $m_{11},\ldots,m_{nn}\geq 0$, form an $R$-basis of $A_{R,q}(n)$. We make this slightly more formal.
 
 \q Let $r\geq 0$. As in \cite{Green},   we write $I(n,r)$ for the set of maps
 $i\colon \{1,\ldots,r\}\to \{1,\ldots,n\}$. We identify $i\in I(n,r)$ with the
 sequence $(i_1,\ldots,i_r)$ in the obvious way. For $i,j\in I(n,r)$ we write $c_{ij}$ for the product
 $c_{i_1j_1}\ldots c_{i_rj_r}$.   We write  $i\leq j$ if $i_a\leq j_a$, for all $1\leq a\leq r$, and write $i<j$ if $i\leq j$ and $i\neq j$.  We write $Y(n,r)$ for the set of all pairs $(i,j)\in I(n,r)$ such that $i_1\leq \cdots\leq i_r$ and whenever, for some  $1\leq a<r$,  we have  $i_a=i_{a+1}$ then $j_a\leq j_{a+1}$. We write $Y(n)$ for the disjoint union of the sets $Y(n,r)$, $r\geq 0$.

\begin{lemma} The elements $c_{ij}$, with $i,j\in Y(n)$ form an $R$-basis of $A_{R,q}(n)$ and, for $r\geq 0$, the elements $c_{ij}$, with $i,j\in Y(n,r)$, form an $R$-basis of $A_{R,q}(n,r)$.

\end{lemma}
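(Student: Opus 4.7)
The plan is to deduce this from the already-cited basis result (Theorem 1.1.8 of \cite{SD25}), which says that the ordered monomials
\begin{equation*}
c_{11}^{m_{11}}c_{12}^{m_{12}}\cdots c_{nn}^{m_{nn}}, \qquad m_{11},\ldots,m_{nn}\geq 0,
\end{equation*}
form an $R$-basis of $A_{R,q}(n)$. The only task is to match this basis with the reindexed family $\{c_{ij}:(i,j)\in Y(n)\}$.

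First I would define a bijection $\Phi$ between $Y(n,r)$ and the set of exponent tuples $(m_{ab})_{1\leq a,b\leq n}$ with $\sum_{a,b} m_{ab}=r$. Given $(i,j)\in Y(n,r)$, put $m_{ab}=\#\{k:i_k=a,\ j_k=b\}$; conversely, given an exponent tuple, list each pair $(a,b)$ exactly $m_{ab}$ times in lexicographic order (first by $a$, then by $b$) to produce a sequence of pairs $(i_1,j_1),\ldots,(i_r,j_r)$. The two defining conditions on $Y(n,r)$ — weakly increasing $i$, and weakly increasing $j$ within runs of equal $i$ — are exactly what is needed for this procedure to produce a unique element of $Y(n,r)$, so $\Phi$ is a well-defined bijection.

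Next I would verify that the corresponding products are equal in $A_{R,q}(n)$. For $(i,j)\in Y(n,r)$ the word $c_{i_1j_1}c_{i_2j_2}\cdots c_{i_rj_r}$ groups the factors according to the value of $i_k$. Inside each group the first index is fixed, and the first family of relations,
\begin{equation*}
c_{ir}c_{is}=c_{is}c_{ir}\qquad (1\leq i,r,s\leq n),
\end{equation*}
says that factors sharing the same first index commute. Hence within the $a$-th group the product equals $c_{a1}^{m_{a1}}c_{a2}^{m_{a2}}\cdots c_{an}^{m_{an}}$ regardless of the internal ordering, and the full word equals the standard monomial $c_{11}^{m_{11}}\cdots c_{nn}^{m_{nn}}$ corresponding to $\Phi(i,j)$. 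Thus the two families $\{c_{ij}:(i,j)\in Y(n)\}$ and $\{c_{11}^{m_{11}}\cdots c_{nn}^{m_{nn}}\}$ coincide, and the first assertion follows from Theorem~1.1.8 of~\cite{SD25}.

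Finally, for the graded statement, observe that $c_{ij}$ with $(i,j)\in Y(n,r)$ is a product of exactly $r$ coordinate elements, hence is homogeneous of degree $r$; combined with the direct sum decomposition $A_{R,q}(n)=\bigoplus_{r\geq0} A_{R,q}(n,r)$ this forces $\{c_{ij}:(i,j)\in Y(n,r)\}$ to be an $R$-basis of $A_{R,q}(n,r)$. The only delicate point is checking that $\Phi$ is genuinely a bijection (in particular that the $Y(n,r)$-conditions are precisely the lexicographic-ordering conditions), but this is purely combinatorial and presents no real obstacle; the algebraic content is entirely absorbed into the single commutation relation within a row.
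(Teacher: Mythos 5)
Your proof is correct and matches what the paper intends: the paper gives no explicit argument (prefacing the lemma with ``We make this slightly more formal''), since the $Y(n,r)$ conditions are precisely the statement that the pairs $(i_1,j_1),\ldots,(i_r,j_r)$ are listed in lexicographic order, so $c_{ij}$ for $(i,j)\in Y(n,r)$ is literally the same word as the standard monomial $c_{11}^{m_{11}}\cdots c_{nn}^{m_{nn}}$ under your bijection $\Phi$. One small remark: invoking the commutativity relation $c_{ir}c_{is}=c_{is}c_{ir}$ is harmless but actually unnecessary here, because the $j$-entries are already weakly increasing within each constant-$i$ block, so no rearrangement is needed to collect the factors into powers.
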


\q  We write $I$ for the ideal of $A_{R,q}(n)$ generated by all $c_{ij}$, with $1\leq i<j\leq n$.  We leave it to the reader to check (by an easy induction argument using the defining relations)   the following result.

\begin{lemma} The ideal $I$ has  $R$-basis $c_{ij}$, with $(i,j)\in Y(n,r)$  for some $r$  and $i_a<j_a$, for some $1\leq a\leq r$.

\end{lemma}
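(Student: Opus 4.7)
The plan is to show that $I$ coincides with the $R$-submodule $J$ of $A_{R,q}(n)$ spanned by those standard basis elements $c_{ij}$, $(i,j)\in Y(n)$, for which $i_a<j_a$ holds for at least one index $a$. The inclusion $J\subseteq I$ is immediate: such a basis element factorises as $c_{ij}=c_{i_1j_1}\cdots c_{i_rj_r}$, and the factor $c_{i_aj_a}$ belongs to $I$ by its very definition as one of the generators.

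For the reverse inclusion $I\subseteq J$, I would observe that every element of $I$ is an $R$-linear combination of products $u\cdot c_{pq}\cdot v$ with $p<q$ and $u,v\in A_{R,q}(n)$. Expanding $u$ and $v$ in the standard basis from the preceding lemma and distributing, the task reduces to showing that any (not necessarily standard) monomial $c_{p_1q_1}\cdots c_{p_mq_m}$ containing at least one factor $c_{p_aq_a}$ with $p_a<q_a$ lies in $J$ once reduced to the standard basis.

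The key step is to establish, by induction on the number of straightening steps, the following invariant: \emph{if $x\in A_{R,q}(n,m)$ is expressed as an $R$-linear combination of monomials each containing an upper factor, then after a single application of one of the three defining relations to a substring within one of those monomials, the resulting expression is again a combination of monomials each containing an upper factor.} For the commutation relations
\begin{equation*}
c_{ir}c_{is}=c_{is}c_{ir},\qquad c_{jr}c_{is}=q\,c_{is}c_{jr},
\end{equation*}
the multiset of index pairs appearing in the rewritten substring is unchanged, so the invariant is clear. The delicate case is the third relation
\begin{equation*}
c_{js}c_{ir}=c_{ir}c_{js}-(q-1)c_{is}c_{jr}\qquad (i<j,\ r<s),
\end{equation*}
whose second right-hand term introduces the new pairs $(i,s)$ and $(j,r)$. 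If the upper factor of the ambient monomial lies outside the rewritten substring it is carried along unchanged; if instead it is the pair $(j,s)$ (so $j<s$) or $(i,r)$ (so $i<r$), then the inequalities $i<j$ and $r<s$ combined with $j<s$ or $i<r$ force $i<s$, so the factor $c_{is}$ introduced on the right is itself upper.

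The only substantive obstacle is this last case analysis; the remainder is the routine bookkeeping suggested by the authors' remark that the proof is an ``easy induction argument''.
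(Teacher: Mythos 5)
Your argument is correct and is precisely the ``easy induction argument using the defining relations'' that the paper indicates without writing out. The key observation --- that only the third relation can destroy an upper factor inside the rewritten pair, and that the inequalities $i<j$, $r<s$ together with $j<s$ or $i<r$ then force $i<s$, so the new factor $c_{is}$ is itself upper --- is exactly the point that makes the induction close, and the remaining steps (reduction to monomials containing a fixed upper generator, and the trivial check for relations one and two) are handled correctly.
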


\q We set $\barA(n)=A(n)/I$. For $f\in A(n)$ we set $\bar f=f+I\in \barA(n)$.
For $r\geq 0$, we write $\barY(n,r)$ for the set of all $(i,j)\in Y(n,r)$ such that $i\geq j$. We set  $\barY(n)=\bigcup_{r\geq 0} \barY(n,r)$.
As an $R$-module we have $A_{R,q}(n)=I\oplus D$, where $D=\oplus_{(i,j) \in \barY(n)}  Rc_{ij}$. Hence we have the following.

\begin{lemma} $\barA_{R,q}(n)$ has $R$-basis $\barc_{ij}$, $(i,j)\in \barY(n)$,  and, for $r\geq 0$, $\barA_{R,q}(n,r)$ has $R$-basis $\barc_{ij}$, $(i,j)\in \barY(n,r)$.

\end{lemma}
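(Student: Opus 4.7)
The plan is to extract the result as an immediate bookkeeping consequence of the two preceding lemmas, together with the decomposition $A_{R,q}(n)=I\oplus D$ already recorded just before the statement.

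First I would fix $r\ge 0$ and consider the homogeneous component $A_{R,q}(n,r)$. By the first lemma, this has $R$-basis $\{c_{ij}:(i,j)\in Y(n,r)\}$. The set $Y(n,r)$ partitions as a disjoint union
\begin{equation*}
 Y(n,r)=\barY(n,r)\ \sqcup\ \{(i,j)\in Y(n,r):i_a<j_a\text{ for some }1\le a\le r\},
\end{equation*}
since the condition $i\ge j$ (i.e.\ $i_a\ge j_a$ for all $a$) is precisely the negation of the existence of some coordinate $a$ with $i_a<j_a$. By the second lemma, the second piece of this partition indexes an $R$-basis of $I\cap A_{R,q}(n,r)$; note that this intersection really is the degree-$r$ part of $I$ because $I$ is generated by degree-one elements and hence is a homogeneous ideal.

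Next I would assemble this into an $R$-module direct sum decomposition
\begin{equation*}
 A_{R,q}(n,r) \;=\; \bigl(I\cap A_{R,q}(n,r)\bigr)\ \oplus\ D_r, \qquad D_r=\bigoplus_{(i,j)\in \barY(n,r)} R\,c_{ij},
\end{equation*}
which is just the restriction to degree $r$ of the decomposition $A_{R,q}(n)=I\oplus D$ stated in the excerpt. The quotient map $A_{R,q}(n,r)\to\barA_{R,q}(n,r)$ restricted to $D_r$ is therefore an $R$-linear isomorphism, which sends $c_{ij}\mapsto\barc_{ij}$. Hence $\{\barc_{ij}:(i,j)\in\barY(n,r)\}$ is an $R$-basis of $\barA_{R,q}(n,r)$. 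Summing over $r\ge 0$ yields the ungraded statement, since $\barY(n)$ is by definition the disjoint union of the $\barY(n,r)$.

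There is essentially no obstacle: everything hinges on the bases already provided, on the observation that the generators of $I$ are homogeneous (so $I$ meets each $A_{R,q}(n,r)$ in the $R$-span of those $c_{ij}$ with $(i,j)\in Y(n,r)$ and $i_a<j_a$ for some $a$), and on the tautology that $\barY(n,r)$ is exactly the complement of that index set inside $Y(n,r)$. The only thing one must be careful about is not confusing the ordering $i\ge j$ on multi-indices with the possibility $i=j$, which is allowed and contributes the ``diagonal'' basis elements $\barc_{ii}$ to $\barA_{R,q}(n,r)$.
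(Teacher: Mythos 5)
Your proof is correct and follows the same route the paper takes: the paper simply records the $R$-module decomposition $A_{R,q}(n)=I\oplus D$ with $D=\oplus_{(i,j)\in\barY(n)}Rc_{ij}$ and reads the lemma off immediately, which is exactly what you do, with the extra (harmless and correct) care of spelling out the homogeneity of $I$ and restricting to each degree $r$.
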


\section{ Quantum general linear groups}

\bs\rm

\q  Let $\K$ be a field. The  category of quantum groups over $\K$  is the dual of the category of Hopf algebras over $\K$.  More informally, we shall use the expression \lq\lq $G$ is a quantum group over $\K$"  to indicate that we have in mind a Hopf algebra over $\K$, which we will denote $\K[G]$ and call the coordinate algebra of $G$. By the expression \lq\lq $\theta\colon G\to H$ is a morphism of quantum groups (over $\K$)"   we indicate  that $G$ and $H$ are quantum groups and that we have in mind a Hopf algebra morphism from $\K[H]$ to $\K[G]$, which we call the comorphism  of $\theta$ and denote $\theta^\sharp$. We shall say that a quantum group $H$ is a (quantum) subgroup of a quantum group $G$ over $\K$ to indicate that $\K[H]=\K[G]/I_H$ for some Hopf ideal $I_H$ of $\K[G]$, which we call the defining ideal of $H$ in $G$. If $H$ is a quantum subgroup of the quantum group $G$ then by the inclusion map  $i\colon H\to G$ we mean the quantum group homomorphism  such that $i^\sharp\colon \K[G]\to \K[H]$ is the natural map.

\q Let $G$ be a quantum group over $\K$. By the category of left $G$-modules we
mean the category of right $\K[G]$-comodules.  We write $\Mod(G)$ for the
category of left $G$-modules and $\mod(G)$ for the category of finite
dimensional left  $G$-modules.  For $V,W\in \Mod(G)$ and $i\geq 0$ we write $\Ext^i_G(V,W)$ for $\Ext^i_{\K[G]}(V,W)$. Let $H$ be a quantum subgroup of $G$. Then we have the induction functor $\ind_H^G=\phi^0\colon \Mod(H)\to \Mod(G)$, where $\phi=i^\sharp$ is the comorphism of the inclusion map $i\colon H\to G$. The functor $\ind_H^G$ is left exact so we have the derived functors $R^i\ind_H^G\colon \Mod(H)\to \Mod(G)$, for $i\geq 0$.

\q We work with the quantum coordinate algebra $A_{R,q}(n)$ of the previous section, now taking $R=\K$ and $q\neq 0$. We write $\Sigma_r$ for the symmetric group on $\{1,2,\ldots,r\}$, for $r$ a positive integer. To simplify notation
we will omit  $\K$ and $q$ in subscript in the objects defined in the previous section, where confusion seems unlikely.

By~\cite{SD25}, Theorem~1.4.2, $A(n)$ has a unique structure of a  bialgebra
with comultiplication $\de\colon A(n)\to A(n)\otimes A(n)$ and counit $\ep\colon A(n)\to \K$, satisfying
$$\de(c_{ij})=\sum_{r=1}^n c_{ir}\otimes c_{rj}, \hskip 20pt \ep(c_{ij})=\de_{ij}$$
for $1\leq i,j\leq n$ and where $\de_{ij}$ is the Kronecker delta. 

\q The quantum determinant 
$$d=\sum_{\pi\in \Sigma_n} \sgn(\pi) c_{1,\pi(1)}c_{1,\pi(2)}\ldots c_{n,\pi(n)} $$
is a group-like element of $A(n)$.  Here $\sgn(\pi)$ denotes  the sign of a
permutation $\pi$. Furthermore, we have $c_{ij}d=q^{i-j} d c_{ij}$ for $1\leq
i,j\leq n$ (see \cite[Section~4]{SD25}). It follows that we can form the Ore localisation  $A(n)_d$.  The
bialgebra structure on $A(n)$ extends to $A(n)_d$ and indeed the localisation
$A(n)_d$ is a Hopf algebra. We write $G(n)$ for the quantum group with coordinate algebra $\K[G(n)]=A(n)_d$.   

\q  We write $B(n)$ for the quantum subgroup whose defining ideal $I_{B(n)}$  is
generated by all $c_{ij}$, with $1\leq i<j\leq n$. We write $T(n)$ for the
quantum subgroup whose defining ideal is generated by all $c_{ij}$ with $1\leq
i,j\leq n$ and $i\neq j$. The inclusion map $A(n)\to \K[G(n)]$ gives rise to an injective map $\barA(n)\to \K[B(n)]$ by which we  identify $\barA(n)$ with a subbialgebra of $\K[B(n)]$.   A $G(n)$-module $V$ is called polynomial (\resp  polynomial of degree $r$) if $\cf(V)\leq A(n)$ (\resp $\cf(V)\leq A(n,r)$) and a $B(n)$-module $M$ is called polynomial (\resp polynomial of degree $r$) if $\cf(M)\leq \barA(n)$ (\resp $\cf(V)\leq \barA(n,r)$). 
We shall often identify a polynomial $G(n)$-module (\resp $B(n)$-module) with the corresponding $A(n)$-comodule (\resp $\barA(n)$-comodule).

\q We shall also need the parabolic (quantum) subgroups containing $B(n)$.  We fix a string $a=(a_1,\ldots,a_m)$  of positive integers whose sum is $n$. We let $I(a)$ be the ideal of $k[G(n)]$ generated by all $c_{ij}$ such that $1\leq i<j\leq a_1$ or $a_1+\cdots + a_r < i< j\leq a_1+\cdots +a_{r+1}$ for some $1\leq r<m$. Then  $I(a)$ is a Hopf ideal and we denote by $P(a)$ the quantum subgroup of $G(n)$ with defining ideal $I(a)$.  Thus we have $P(1,1,\ldots,1)=G(n)$ and $P(n)=B(n)$.  For $1\leq i<m$ we shall write $P_i$ for the \lq \lq minimal parabolic" $P(a)$, where $a=(1,1,\ldots,2,1,\ldots,1)$ (with $2$ in the $i$th position).

\q We now introduce certain combinatorial objects associated with the representation theory of $G(n)$ and its subgroups, following \cite{SD37}. We set $X(n)=\zed^n$. We shall write $\de_n$, or simply $\de$,  for $(n-1,n-2,\ldots,1,0)\in X(n)$. For $1\leq i\leq n$ we set $\ep_i=(0,\ldots,0,1,0,\ldots,0)$ (with $1$ in the $i$th position).  We have the dominance order $\unlhd$ on $X(n)$: for $\lambda=(\lambda_1,\ldots,\lambda_n), \mu=(\mu_1,\ldots,\mu_n)$ we write  $\lambda\unlhd  \mu$ if $\lambda_1+\cdots+\lambda_i\leq \mu_1+\cdots+\mu_i$, for $1\leq i<n$, and $\lambda_1+\cdots+\lambda_n=\mu_1+\cdots+\mu_n$.

\q We  write $X^+(n)$ for the set of all
$\lambda=(\lambda_1,\ldots,\lambda_n)\in X(n)$ with $\lambda_1\geq \cdots\geq
\lambda_n$. Elements of $X(n)$ will sometimes be called weights and elements of
$X^+(n)$ called dominant weights. We write $\Lambda(n)$ for the set of
polynomial weights, i.e., the set of $\lambda=(\lambda_1,\ldots,\lambda_n)\in
X(n)$ with all $\lambda_i\geq 0$, and write $\Lambda^+(n)$ for the set of
dominant weights, i.e., $X^+(n)\bigcap \Lambda(n)$. We define the degree of a
polynomial weight $\lambda=(\lambda_1,\ldots,\lambda_n)$ by
$\deg(\lambda)=\lambda_1+\cdots+\lambda_n$.  For $r\geq 0$ we  define
$\Lambda(n,r)\subset \Lambda(n)$  to be the set of all polynomial weights of
degree $r$ (or compositions of $r$). We define  the length, $\l(\lambda)$, of a polynomial weight $\lambda$ to be  $0$ if $\lambda=0$ and to be the number of non-zero entries of $\lambda$ if $\lambda\neq 0$.

\q For $\lambda=(\lambda_1,\ldots,\lambda_n)\in X(n)$ we have a one dimensional
$B(n)$-module $\K_\lambda$: the comodule  structure map $\tau\colon \K_\lambda\to \K_\lambda\otimes k[B(n)]$ takes $v\in \K_\lambda$ to $v\otimes (c_{11}^{\lambda_1}\ldots c_{nn}^{\lambda_n}+I_{B(n)})$. We regard $\K_\lambda$ also as a $T(n)$-module by restriction. 

The modules $\K_\lambda$, $\lambda\in X(n)$, form a complete set of pairwise
non-isomorphic irreducible $T(n)$-modules. For a $T(n)$-module $V$ we have the
weight space decomposition  $V=\oplus_{\lambda\in X(n)} V^\lambda$, where $V^\lambda$ is a direct sum of copies of $\K_\lambda$, $\lambda\in X(n)$.

\q  For $\lambda\in X(n)$ the induced module $\ind_{B(n)}^{G(n)}\K_\lambda$ is non-zero if and only if $\lambda\in X^+(n)$.  We  set $\nabla(\lambda)=\ind_{B(n)}^{G(n)}\K_\lambda$, for $\lambda\in X^+(n)$.  The socle $L(\lambda)$ of $\nabla(\lambda)$ is simple. The modules $L(\lambda)$, $\lambda\in X^+(n)$, form a complete set of pairwise non-isomorphic irreducible $G(n)$-modules and the modules $L(\lambda)$, $\lambda\in \Lambda^+(n)$, form a complete set of pairwise non-isomorphic irreducible polynomial  $G(n)$-modules. We will write $D$ for the determinant module, i,e., the (one dimensional) left $G(n)$-module $L(1,\ldots,1)$.

\q Let $1\leq i<n$.  Let $\lambda=(\lambda_1,\ldots,\lambda_n)\in X(n)$ and suppose that $m=\lambda_i-\lambda_{i+1}\geq 0$.  We define $\nabla_i(\lambda)=\ind_{B(n)}^{P_i(n)}\K_\lambda$. Then $\nabla_i(\lambda)$ has weights $\lambda-r(\ep_i-\ep_{i+1})$, $0\leq r\leq m$, each occurring with multiplicity one (see \cite{SD37}, p251). 

\q We shall need that a $G(n)$-module whose composition factors  have the form $L(\lambda)$ with $\lambda\in \Lambda^+(n)$ (resp. $\lambda\in \Lambda^+(n,r)$) is polynomial (resp. polynomial of degree $r$). Given the results of \cite{SD37} this follows from the arguments in the classical case in  \cite{SD17}. We make this explicit.

\q  Let $\pi\subseteq X^+(n)$.  We say that a $G(n)$-module $V$ belongs to $\pi$ if each composition factor of $V$ belongs to  $\{L(\lambda)\vert\ \lambda\in \pi\}$.  For an arbitrary $G(n)$-module we write $O_\pi(V)$ 
 for the largest $G(n)$-submodule of $V$ belonging to $\pi$.  Regarding $\K[G(n)]$ as the  left regular $G(n)$-module we define $A(\pi)=O_\pi(\K[G])$.  Then, by the arguments for the classical case, \cite{SD17},  Section 1.2, one has the following.

 \begin{lemma} $A(\pi)$ is a subcoalgebra of $\K[G(n)]$  and a $G(n)$-module $V$ belongs to $\pi$ if and only if $\cf(V)\leq A(\pi)$.
 \end{lemma}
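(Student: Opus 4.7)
The plan is to adapt the argument from the classical case \cite{SD17}, Section 1.2 to the quantum setting. The essential point is that the Hopf algebra $\K[G(n)]$ carries two commuting comodule structures over itself, both given by $\Delta$: a right $\K[G(n)]$-comodule structure (giving the left regular $G(n)$-module structure used in the definition of $A(\pi)$) and a left $\K[G(n)]$-comodule structure, the commutation being exactly coassociativity.

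First I would show that $A(\pi)$ is a subcoalgebra. The inclusion $\Delta(A(\pi)) \subseteq A(\pi) \otimes \K[G(n)]$ is immediate from the definition of $A(\pi)$ as a left $G(n)$-submodule. For the reverse $\Delta(A(\pi)) \subseteq \K[G(n)] \otimes A(\pi)$, fix $\xi \in \K[G(n)]^*$ and form $\rho_\xi = (\xi \otimes \id) \circ \Delta \colon \K[G(n)] \to \K[G(n)]$. Coassociativity gives $\Delta \circ \rho_\xi = (\rho_\xi \otimes \id) \circ \Delta$, so $\rho_\xi$ is a morphism of left $G(n)$-modules. Hence $\rho_\xi(A(\pi))$ is a submodule of $\K[G(n)]$ belonging to $\pi$, and by maximality $\rho_\xi(A(\pi)) \subseteq A(\pi)$. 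Letting $\xi$ range over $\K[G(n)]^*$ then forces $\Delta(A(\pi)) \subseteq \K[G(n)] \otimes A(\pi)$. Intersecting the two inclusions and using the identity $(U \otimes \K[G(n)]) \cap (\K[G(n)] \otimes U) = U \otimes U$ for any subspace $U$ yields $\Delta(A(\pi)) \subseteq A(\pi) \otimes A(\pi)$.

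For the characterization via $\cf$, both directions follow formally. If $\cf(V) \leq A(\pi)$ then the coaction of $V$ factors through $V \otimes A(\pi)$, so $V$ becomes a right $A(\pi)$-comodule; the $\K[G(n)]$-subcomodule lattice of $V$ coincides with its $A(\pi)$-subcomodule lattice, so each composition factor is a simple $A(\pi)$-comodule, which in turn embeds into $A(\pi)$ and is therefore of the form $L(\lambda)$ with $\lambda \in \pi$. Conversely, if $V$ belongs to $\pi$, one reduces to finite-dimensional $V$ (coefficient spaces pass to unions over finite-dimensional subcomodules). Then $\tau \colon V \hookrightarrow V \otimes \K[G(n)]$ is an embedding of left $G(n)$-modules when the target carries the comodule structure $\id_V \otimes \Delta$; this target is isomorphic to $\K[G(n)]^{\dim V}$, whose largest submodule belonging to $\pi$ is $A(\pi)^{\dim V}$, and so $\tau(V) \subseteq V \otimes A(\pi)$, which is precisely the statement $\cf(V) \leq A(\pi)$. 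The main obstacle is the subcoalgebra step: one must correctly identify $\rho_\xi$ as a morphism of left $G(n)$-modules via coassociativity and assemble the pointwise information $\rho_\xi(A(\pi)) \subseteq A(\pi)$ into the global inclusion $\Delta(A(\pi)) \subseteq \K[G(n)] \otimes A(\pi)$; the remaining parts are essentially formal consequences of the definition of coefficient space.
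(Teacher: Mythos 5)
Your proof is correct, and since the paper gives no argument of its own but simply cites the classical case (\cite{SD17}, Section~1.2), you have done exactly what the paper asks the reader to do: reconstruct the classical argument in the quantised setting. The key steps all check out. For the subcoalgebra claim, the inclusion $\Delta(A(\pi))\subseteq A(\pi)\otimes\K[G]$ is indeed the defining property of $A(\pi)$ as a subcomodule of the right regular comodule; the use of the \lq\lq left translation" operators $\rho_\xi=(\xi\otimes\id)\circ\Delta$, which are comodule endomorphisms of $\K[G]$ by coassociativity, and which carry any homomorphic image of $A(\pi)$ (hence any module belonging to $\pi$) back into $A(\pi)$ by maximality, correctly forces $\Delta(A(\pi))\subseteq\K[G]\otimes A(\pi)$; and the intersection identity $(U\otimes W)\cap(W\otimes U)=U\otimes U$ for a subspace $U\le W$ is a standard linear-algebra fact. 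For the $\cf$ characterisation, both directions are sound: factoring the coaction through $V\otimes A(\pi)$ makes $V$ an $A(\pi)$-comodule whose simple subquotients embed in $A(\pi)$ and hence lie in $\pi$; conversely $\tau\colon V\hookrightarrow V\otimes\K[G]\cong\K[G]^{\dim V}$ is an embedding of comodules, $O_\pi$ commutes with finite direct sums, and landing in $V\otimes A(\pi)$ is exactly $\cf(V)\le A(\pi)$, with the reduction to finite dimensions justified by local finiteness of comodules. This is essentially the same route as the classical reference.
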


 In the case $\pi=\Lambda^+(n,r)$, $r\geq 0$,  we have $A(\pi)=A(n,r)$, see \cite{SD37}, p263,  and taking $\pi=\Lambda^+(n)$, since $\Lambda^+(n)=\bigcup_{r\geq 0} \Lambda^+(n,r)$, we have $A(\pi)=A(n)$. Hence the above lemma gives:

\begin{lemma}  A $G(n)$-module $V$  is polynomial (resp. polynomial of degree
	$r$)   if and only if each  composition factor  of $V$ belongs to  $\{L(\lambda)\vert \lambda\in \Lambda^+(n)\}$  (resp.  $\{L(\lambda)\vert \lambda\in \Lambda^+(n,r)\}$). 
\end{lemma}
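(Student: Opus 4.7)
The plan is to deduce this lemma as an immediate corollary of the preceding lemma together with the identifications $A(\Lambda^+(n,r)) = A(n,r)$ and $A(\Lambda^+(n)) = A(n)$ already announced in the paragraph just before the statement (and referenced to \cite{SD37}, p.\,263). There is essentially nothing new to do beyond matching definitions.

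First I would recall the \emph{definition}: a $G(n)$-module $V$ is polynomial (resp.\ polynomial of degree~$r$) precisely when $\cf(V) \leq A(n)$ (resp.\ $\cf(V) \leq A(n,r)$). On the other hand, by the preceding lemma applied to $\pi = \Lambda^+(n)$ (resp.\ $\pi = \Lambda^+(n,r)$), the condition that every composition factor of $V$ is of the form $L(\lambda)$ with $\lambda\in\pi$ is equivalent to $\cf(V) \leq A(\pi)$.

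So the whole content is the identification of the two coefficient spaces. For the degree-$r$ case I would invoke the identification $A(\Lambda^+(n,r)) = A(n,r)$ from \cite{SD37}, p.\,263, which is the quantised analogue of the classical fact. For the ungraded case I would then note that
\begin{equation*}
A(\Lambda^+(n)) \;=\; O_{\Lambda^+(n)}(\K[G(n)]) \;=\; \bigcup_{r\geq 0} O_{\Lambda^+(n,r)}(\K[G(n)]) \;=\; \bigcup_{r\geq 0} A(n,r) \;=\; A(n),
\end{equation*}
using $\Lambda^+(n) = \bigcup_{r\geq 0} \Lambda^+(n,r)$ and the fact that $O_\pi$ commutes with directed unions of subsets of $X^+(n)$ (since a module belongs to a union $\bigcup_\alpha \pi_\alpha$ iff each composition factor belongs to some $\pi_\alpha$, and for finitely generated submodules this happens inside a single $\pi_\alpha$). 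Combining these identifications with the preceding lemma yields both equivalences in the statement.

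No real obstacle is expected. The only subtle point is justifying $A(\Lambda^+(n)) = \bigcup_r A(n,r)$, which reduces to the observation that any finite-dimensional submodule of $\K[G(n)]$ belonging to $\Lambda^+(n)$ already belongs to $\Lambda^+(n,r)$ for some single $r$ (take $r$ to bound the degrees of the finitely many composition factors); this in turn uses the fact, already available from the previous lemma applied to each $\Lambda^+(n,r)$, that polynomial composition factors of bounded degree yield coefficient space inside $A(n,r)$.
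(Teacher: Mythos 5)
Your overall outline does match the paper's: use Lemma~4.1 with $\pi=\Lambda^+(n)$ and $\pi=\Lambda^+(n,r)$, together with the identifications $A(\Lambda^+(n,r))=A(n,r)$ (cited to \cite{SD37}, p.\,263) and $A(\Lambda^+(n))=A(n)$. However, your justification of the second identification has a genuine gap.

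The family $\{\Lambda^+(n,r)\}_{r\ge 0}$ is a \emph{disjoint} partition of $\Lambda^+(n)$ by degree, not a directed family, so the principle that $O_\pi$ commutes with directed unions does not apply. The parenthetical ``for finitely generated submodules this happens inside a single $\pi_\alpha$'' is exactly where the argument breaks down: a finite-dimensional $G$-submodule of $\K[G(n)]$ belonging to $\Lambda^+(n)$ can perfectly well have composition factors of different degrees (e.g.\ $A(n,1)\oplus A(n,2)$), and such a module does not belong to $\Lambda^+(n,r)$ for any single $r$. Passing to the increasing family $\pi_{\le r}:=\bigcup_{s\le r}\Lambda^+(n,s)$ repairs the directedness but only shifts the gap: one then needs $A(\pi_{\le r})=\bigoplus_{s\le r}A(n,s)$, i.e.\ that $O_{\pi_1\cup\pi_2}(\K[G])=O_{\pi_1}(\K[G])+O_{\pi_2}(\K[G])$ for the sets $\pi_s=\Lambda^+(n,s)$. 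That is not a formal property of the operator $O$ on disjoint unions; it requires knowing that dominant polynomial weights of different degrees lie in different blocks (are unlinked), so that a module with composition factors in $\pi_1\sqcup\pi_2$ decomposes as a direct sum of a $\pi_1$-piece and a $\pi_2$-piece. Equivalently, one could argue directly that $\K[G]/A(n)$ has no composition factor $L(\lambda)$ with $\lambda\in\Lambda^+(n)$, using the exhaustion $\K[G]=\bigcup_m d^{-m}A(n)$ and tracking highest weights of the factors $d^{-m}A(n,r)/(d^{-m}A(n,r)\cap A(n))$. Either way, an extra structural input is needed that your proposal asserts but does not supply; the paper covers this by appealing to the classical arguments of \cite{SD17} rather than by a directed-union formalism.
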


\begin{remark}  We note that if $M$ is a polynomial $B$-module then $\ind_B^GM$ is a polynomial $G$-module. It is enough to check this for $M$ finite dimensional since induction commutes with direct limits. By the left exactness of induction and the  Lemma 4.2  it is enough to check this for $M$ one dimensional. So we may assume that $M=\K_\lambda$ for some $\lambda\in \Lambda(n)$. But now we have 
$$\ind_B^G\K_\lambda=\begin{cases}
\nabla(\lambda), & \hbox{ if } \lambda\in \Lambda^+(n);\cr
0, & \hbox{ if }  \lambda\not\in \Lambda^+(n)
\end{cases}$$
and, in particular,  $\ind_B^GM$ is polynomial.
\end{remark}

\section{Extensions of  $B$-modules  and polynomial \\ $B$-modules}

\q Though it is not needed for the application to resolutions of modules for the
Borel-Schur algebras, we take this opportunity to put on record the quantised
version of \cite[Theorem~7]{WoodcockMasterpiece} giving that, for homogeneous polynomial $B(n)$-modules, the extension groups $\Ext^i(V,X)$ are the same whether calculated in the module category of the Borel-Schur algebra or the full $B(n)$-module category.  Though the proof given here looks rather different it is similar at key points to that of Woodcock in the classical case, \cite{WoodcockBielefeld} and we gratefully acknowledge the influence of  \cite{WoodcockBielefeld}. A later proof was given in \cite{WoodcockComm} using the deep theory of  cohomology of line bundles on Schubert varieties due to van der Kallen, \cite{vdK} and related results are to be found in the later work \cite{WoodcockMasterpiece} using the theory of global bases.

\q In this section we adopt the following notation. We put $B=B(n)$, $A=\barA(n)$ and $A_m=\K[\barc_{m1},\barc_{m2},\ldots,\barc_{mm}]$, $x_m=\barc_{mm}$, $y_m=\barc_{mm}^{-1}$ , for $1\leq m\leq n$.  For $\alpha=(\alpha_1,\ldots,\alpha_n)\in \Lambda(n)$ we put $x^\alpha=x_1^{\alpha_1}\ldots x_n^{\alpha_n}$ and $y^\alpha=y_1^{\alpha_1}\ldots y_n^{\alpha_n}$. We write simply $d$ for the restriction of the determinant to the quantum subgroup $B(n)$,  i.e., $d=x_1\ldots x_n$.

\q We have $A=A_1\otimes \cdots \otimes A_n$ and it is easy to check that 
$$A_m/x_mA_m\cong \begin{cases} A_{m-1}, & \hbox{ for $ 1<m\leq n$};\cr
\K, & \hbox{ for $m=1$}
\end{cases}$$
as $B(n)$-modules. 

\q We shall need the following result.

\begin{lemma} Let $\lambda \in \Lambda(n)$ and suppose $1\leq m\leq n$ is such that $\lambda_m\neq 0$. Let $Z$ be a polynomial $B$-module such that for each weight $\mu$ of $Z$, we have $\mu_m=\mu_{m+1}=\cdots=\mu_n=0$. Then  we have
$$\Ext^i_A(\K_\lambda,Z\otimes A_{m+1}\otimes \cdots \otimes A_n)=0$$
for all $i\geq 0$. In particular, we have 
$$\Ext^i_A(\K_\lambda,A/x_mA)=0$$
for all $i\geq 0$.

\end{lemma}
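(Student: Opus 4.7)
My plan is to prove the lemma by induction on $n-m$. The principal tool is the short exact sequence
\[0 \to A_k \otimes \K_{\epsilon_k} \to A_k \to A_{k-1} \to 0\]
coming from the isomorphism $A_k/x_k A_k \cong A_{k-1}$ noted in the preamble (the identification $x_k A_k \cong A_k \otimes \K_{\epsilon_k}$ uses that $x_k$ is central in the commutative ring $A_k$), together with Proposition~7.1, which allows one-dimensional factors to be transposed freely inside tensor products. The ``In particular'' case follows from the main statement applied to $Z = A_1 \otimes \cdots \otimes A_{m-1} \otimes A_{m-1}$, since by the tensor decomposition of $A$ combined with $A_m/x_m A_m \cong A_{m-1}$ one obtains a $B$-module isomorphism $A/x_m A \cong Z \otimes A_{m+1} \otimes \cdots \otimes A_n$ for this~$Z$, whose weights are supported in coordinates $1, \ldots, m-1$.

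First I would reduce to the case where $Z$ is finite-dimensional and homogeneous: since $\Ext$ commutes with direct sums in the second variable and $\Ext^i_A(\K_\lambda,-)$ is nonzero only in total degree $|\lambda|$, we may restrict to a single finite-dimensional graded piece. For the base case $m=n$, the tensor product on the right is empty and I must show $\Ext^i_A(\K_\lambda,Z)=0$ when $\lambda_n \neq 0$ and every weight $\mu$ of $Z$ satisfies $\mu_n=0$. Vanishing of $\Hom$ follows from the weight mismatch in the $n$-th coordinate. For higher $\Ext$, the condition $\mu_n=0$ for every weight of $Z$ forces its coaction to land in matrix coefficients of right weight having $n$-th component zero; combined with the constraint $i \geq j$ inside $\bar A$, this means any coefficient coupling weight $\lambda$ (right weight, $\lambda_n \geq 1$) to a weight of $Z$ (left weight, $n$-component zero) must involve $\bar c_{n,n}=x_n$, whose own left weight has $n$-component $1$, a contradiction. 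Thus every extension splits, and dimension shifting via an injective resolution of $Z$ inside the full subcategory of polynomial $B$-modules with $\mu_n=0$ gives the vanishing for all $i$.

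For the inductive step $n-m \geq 1$, I would tensor the SES for $A_n$ with $Z \otimes A_{m+1} \otimes \cdots \otimes A_{n-1}$ to obtain
\[0 \to Z \otimes A_{m+1} \otimes \cdots \otimes A_{n-1} \otimes A_n \otimes \K_{\epsilon_n} \to Z \otimes A_{m+1} \otimes \cdots \otimes A_n \to Z \otimes A_{m+1} \otimes \cdots \otimes A_{n-1} \otimes A_{n-1} \to 0,\]
and apply $\Ext^i_A(\K_\lambda,-)$. The resulting long exact sequence reduces the desired vanishing to the vanishing for the two outer terms. Using Proposition~7.1 the left term is isomorphic to $(Z \otimes \K_{\epsilon_n}) \otimes A_{m+1} \otimes \cdots \otimes A_n$, while the right term is isomorphic to $(Z \otimes A_{n-1}) \otimes A_{m+1} \otimes \cdots \otimes A_{n-1}$ (one fewer factor of the ``$A_n$-type''). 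For both outer terms I would apply the inductive hypothesis, which requires a slight strengthening: for the right term one uses the inductive hypothesis at $n-m-1$ after decomposing $Z \otimes A_{n-1}$ by weight and treating its $m$-th weight contribution via the $A_{n-1}$-SES; for the left term one iterates the SES to peel off further $\K_{\epsilon_n}$ factors, a process which terminates because within a fixed homogeneous component of degree $|\lambda|$ only finitely many twists can contribute.

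The main obstacle is Step~3: the twist by $\K_{\epsilon_n}$ on the left and the extra $A_{n-1}$ factor on the right both move one outside the exact form of the hypothesis. Handling this cleanly requires either a strengthened inductive statement (allowing a controlled violation of the weight hypothesis in the $n$-th coordinate, absorbed by an inner induction on degree) or a careful double induction on $(n-m, \deg)$, exploiting the finite-dimensionality in each fixed homogeneous degree to guarantee termination.
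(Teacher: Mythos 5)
Your reduction to $Z$ finite-dimensional and homogeneous is fine, and your base case $m=n$ is essentially correct: it amounts to observing that $A_1\otimes\cdots\otimes A_{n-1}$ is a direct summand of the right regular comodule $A$, hence an injective $A$-comodule, and that it has no weight vector of weight $\lambda$ when $\lambda_n\neq 0$. But this is precisely the special case $m=n$ of the paper's own argument, so the real work is in the inductive step, and that step has a genuine gap which you yourself flag and do not close.

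The gap is not a matter of bookkeeping; the strategy of inducting on $n-m$ via the short exact sequence $0\to \K_{\ep_n}\otimes A_n\to A_n\to A_{n-1}\to 0$ produces two outer terms, both of which fall outside the scope of any formulation of the inductive hypothesis. On the left, $Z\otimes\K_{\ep_n}$ has every weight $\mu$ satisfying $\mu_n=1$, violating the required condition $\mu_n=0$; peeling off more copies of $\K_{\ep_n}$ only pushes $\mu_n$ further from zero, and while the degree bound forces eventual termination, you still have to control the right-hand quotient at each stage. On the right, $Z\otimes A_{m+1}\otimes\cdots\otimes A_{n-1}\otimes A_{n-1}$ cannot be rewritten as $(Z\otimes A_{n-1})\otimes A_{m+1}\otimes\cdots\otimes A_{n-1}$: Proposition~7.1 only transposes one-dimensional factors, and $A_{n-1}$ is infinite-dimensional. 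Even granting the transposition, $Z\otimes A_{n-1}$ has weights with nonzero $m$-th component (since $A_{n-1}$ has weights supported on coordinates $1,\ldots,n-1\ge m$), so the inductive hypothesis simply does not apply. The proposed double induction on $(n-m,\deg)$ is not spelled out, and I do not see how it resolves this: the obstruction on the right-hand side is not a degree phenomenon but a weight-support phenomenon.

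The paper's proof avoids the difficulty entirely by never decomposing the factors $A_{m+1},\ldots,A_n$. After reducing to $Z=\K_\mu$ with $\mu_m=\cdots=\mu_n=0$, it uses the comodule structure map to embed $\K_\mu$ into $A_1\otimes\cdots\otimes A_{m-1}$, obtaining a short exact sequence $0\to\K_\mu\to A_1\otimes\cdots\otimes A_{m-1}\to Q\to 0$ in which both middle and cokernel have all weights with vanishing $m$-th component. Tensoring with $A_{m+1}\otimes\cdots\otimes A_n$ makes the middle term $A_1\otimes\cdots\otimes A_{m-1}\otimes A_{m+1}\otimes\cdots\otimes A_n$, which is a $B$-module direct summand of $A$ itself (since $A_m$ has $\K$ as a graded direct summand), hence an injective $A$-comodule on which $\Hom_A(\K_\lambda,-)$ vanishes. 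A single dimension shift, packaged as a minimality argument in $i$, finishes the proof. In short: the paper resolves $\K_\mu$ by a suitable injective rather than filtering the $A_k$'s, and that single injectivity observation is the ingredient your approach is missing.
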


\begin{proof} Suppose not and let $i$ be minimal for which the lemma fails.  Since $\Ext^i_A(\K_\lambda, -)$ commutes with direct limits, the lemma fails for some finite dimensional $Z$  and by the long exact sequence we may assume that $Z=\K_\mu$, for some $\mu\in \Lambda(n)$, with $\mu_m=\mu_{m+1}=\cdots =\mu_n=0$. Now $\K_\mu\otimes A_{m+1}\otimes \cdots \otimes A_n$ has socle $\K_\mu\otimes \K[x_{m+1},\ldots,x_n]$ and so for each weight $\nu$ of the socle we have $\nu_m=0$. Since 
$\lambda_m\neq 0$ there can be no non-zero image of $\K_\lambda$ in the socle of $\K_\mu\otimes A_{m+1}\otimes \cdots \otimes A_n$ 
and therefore $\Hom_A(\K_\lambda,\K_\mu\otimes A_{m+1}\otimes \cdots \otimes A_n)=0$. Thus we must have $i>0$. 

\q Now we have a short exact sequence of $A$-comodules (or polynomial $B$-modules)
$$0\to \K_\mu \to A_1\otimes\cdots\otimes  A_{m-1}\to Q\to 0$$
and for each weight $\nu$ of  $A_1\otimes\cdots\otimes  A_{m-1}$, and hence $Q$, we have $\nu_m=0$.
Tensoring with $A_{m+1}\otimes \cdots\otimes A_n$ we obtain the short exact sequence 
\begin{align*}0\to \K_\mu\otimes A_{m+1}\otimes \cdots\otimes A_n\to  &A_1\otimes \cdots \otimes A_{m-1}\otimes A_{m+1}\otimes \cdots \otimes A_n\cr
\to & Q\otimes A_{m+1}\otimes \cdots \otimes A_n\ \to 0.
\end{align*}
But now $A_1\otimes \cdots \otimes A_{m-1}\otimes A_{m+1}\otimes \cdots \otimes
A_n$ is an injective $A$-comodule (since it is a direct summand of $A$, viewed as the right regular comodule) so we get
$$\Ext^i_A(\K_\lambda,\K_\mu\otimes A_{m+1}\otimes \cdots\otimes A_n)=\Ext^{i-1}_A(\K_\lambda,Q\otimes A_{m+1}\otimes \cdots\otimes A_n)$$
from the long exact sequence. This is $0$, by the minimality of $i$, and so we are done.
\end{proof}

\q We  now consider the functor $\pol\colon \Mod(B)\to \Comod(A)$, taking $X\in\Mod(B)$ to the largest polynomial submodule of $X$. For a morphism  $\theta\colon X\to X'$, of $B$-modules, $\pol(\theta)\colon \pol(X)\to \pol(X')$ is the restriction of $\theta$.

\q For $V\in \Comod(A)$, $X\in \Mod(B)$, since the image of any $B$-module
homomorphism from $V$ to $X$ is contained in $\pol(X)$, we have
$\Hom_B(V,X)=\Hom_A(V,\pol(X))$.  Thus we get a factorisation of left exact functors
$$\Hom_B(V,-)=\Hom_A(V,-)\circ \pol.$$
Moreover, $\pol(\K[B])=A$ and it follows that $\pol$ takes injective $B$-modules to injective $A$-comodules. Thus, for $V\in \Comod(A)$, $X\in \Mod(B)$, we have a Grothendieck spectral sequence,  with second page $\Ext^i_A(V,R^j\pol X)$, converging to $\Ext^*_B(V,X)$. In particular, if $k>0$ and $R^j \pol (X)=0$ for all $0<j<k$, then we have the $5$-term exact sequence
\begin{equation}
	\label{fiveterm}
	\begin{aligned}0\to \Ext^k_A(V,\pol X)\to &\Ext^k_B(V,X)\to
		\Hom_A(V,R^k\pol X)\cr
		\to &\Ext^{k+1}_A(V,\pol X)\to \Ext^{k+1}_B(V,X).
	\end{aligned}
\end{equation}

\begin{theorem} (i) Let $X$ be a polynomial $B$-module. Then we have \\
$R^i\pol (X)=0$, for all $i>0$.

(ii) If $V$ is also a polynomial $B$-module then the above spectral sequence degenerates and we have $\Ext^i_A(V,X)=\Ext^i_B(V,X)$, for all $i\geq 0$.

\end{theorem}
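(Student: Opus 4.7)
Part (ii) follows from (i) via the Grothendieck spectral sequence recorded above: the $E_2$-page $E_2^{i,j}=\Ext^i_A(V,R^j\pol X)$ degenerates to its $j=0$ column under (i), and since $\pol X=X$ for polynomial $X$ this yields $\Ext^i_B(V,X)\cong\Ext^i_A(V,X)$. All the real content is in (i), which I would prove by induction on $i\ge 1$.

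The first reduction is to the single statement $R^i\pol(A)=0$ for all $i\ge 1$. Given a polynomial $X$, embed it in $\Comod(A)$ into a direct sum $I=A^{(J)}$ of copies of $A$ (available since $A$ is injective in $\Comod(A)$) with polynomial cokernel $Y$. The long exact sequence for $R^\bullet\pol$ applied to $0\to X\to I\to Y\to 0$, together with the inductive vanishing of $R^{i-1}\pol(Y)$ (for $i\ge 2$; for $i=1$ one uses the exactness of $X\to I\to Y$ directly), produces an injection $R^i\pol(X)\hookrightarrow R^i\pol(I)$. Since $\K[B]$ is a coalgebra, arbitrary direct sums of injective $\K[B]$-comodules are injective, so $R^\bullet\pol$ commutes with direct sums and $R^i\pol(I)\cong \bigoplus_{J} R^i\pol(A)$; it is thus enough to establish the vanishing for $A$.

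To handle $A$, apply $R^\bullet\pol$ to $0\to A\to\K[B]\to\K[B]/A\to 0$. Since $\K[B]$ (the right regular comodule) is injective in $\Mod(B)$, the long exact sequence gives $R^1\pol(A)\cong\pol(\K[B]/A)$ and $R^i\pol(A)\cong R^{i-1}\pol(\K[B]/A)$ for $i\ge 2$, so it suffices to prove $R^j\pol(\K[B]/A)=0$ for every $j\ge 0$. I would filter $\K[B]/A$ via the submodules $N_m=A[x_m^{-1},\ldots,x_n^{-1}]$ (so $A=N_{n+1}\subset N_n\subset\cdots\subset N_1=\K[B]$) and refine each successive quotient $N_{m-1}/N_m=N_m[x_{m-1}^{-1}]/N_m$ by the powers of $y_{m-1}$, whose associated graded pieces are copies of $(A/x_{m-1}A)[x_m^{-1},\ldots,x_n^{-1}]$ with weight shifts by $-k\ep_{m-1}$ for $k\ge 1$. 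Since $R^j\pol$ commutes with filtered colimits, it suffices to establish the vanishing on these graded pieces. Using Proposition~7.1 (which gives $M\otimes L\cong L\otimes M$ when $L$ is one-dimensional) and the tensor--hom adjunction for the invertible module $L=\K_{-k\ep_{m-1}}$, testing $\Hom_A(\K_\lambda,\pol(\cdot))$ on a shifted piece reduces to testing $\Hom_A(\K_{\lambda+k\ep_{m-1}},\cdot)$ on the unshifted localization. Iteratively stripping the remaining localizations $[x_s^{-1}]$ for $s\in\{m,\ldots,n\}$ in the same way accumulates further shifts that make every coordinate $s\in\{m-1,\ldots,n\}$ of the test weight strictly positive, placing us in the setting of Lemma~5.1 applied to tensor products of the shape $Z\otimes A_{m+1}\otimes\cdots\otimes A_n$ with $Z$ polynomial and the required weight constraints. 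The conclusion $\Ext^i_A(\K_\lambda,\cdot)=0$ of the lemma, combined with the $5$-term exact sequence recorded above (valid inductively once the lower $R^j\pol$'s on these pieces have been shown to vanish), produces the desired vanishing of $R^j\pol$, both in the base case $j=0$ and in the inductive step.

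The main obstacle I expect is the careful bookkeeping of the iterated filtrations, weight shifts, and the $5$-term exact sequence: at each stage one must verify that the accumulated shifts $\sum_s k_s\ep_s$ (with $k_s\ge 1$ at every non-trivial stripping step) genuinely place us in the regime where Lemma~5.1 applies. The rest---direct sum and filtered colimit compatibilities, long exact sequences, the twist/adjunction identities for one-dimensional modules, and the spectral sequence degeneration for part (ii)---is formal.
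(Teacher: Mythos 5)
Your proposal shares the overall architecture of the paper's proof: reduce the vanishing of $R^i\pol$ to the case of the regular comodule $A$, pass to $\K[B]/A$ via the injective $\K[B]$, filter to isolate pieces of the form $A/x_mA$ (up to weight shifts by one-dimensional modules), and invoke Lemma~5.1. The reduction to direct sums of copies of $A$ and the degeneration argument for (ii) are exactly as in the paper.

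However, there is a genuine gap in the crucial step. Lemma~5.1 asserts vanishing of $\Ext^i_A(\K_\lambda,-)$, i.e.\ Ext in the polynomial category $\Comod(A)$. But every piece of information you can extract about $R^j\pol$ by dimension-shifting through the injective $\K[B]$ is an $\Ext^{j-1}_B$-statement (Ext in the full $B$-module category), since $\K[B]$ is injective in $\Mod(B)$, not in $\Comod(A)$. To transfer a non-vanishing of $\Ext^{j-1}_B(\K_\nu, A/x_mA)$ into a contradiction with Lemma~5.1, one needs to already know the comparison isomorphism $\Ext^{j-1}_A = \Ext^{j-1}_B$ on polynomial modules in degree $j-1$ --- which is precisely part of what is being proved. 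The paper resolves this by a \emph{joint} induction: the inductive statement $P(k)$ simultaneously asserts (a) $R^j\pol X'=0$ for $0<j<k$ and (b) $\Ext^j_A(V',X')=\Ext^j_B(V',X')$ for $j<k$, for all polynomial $V',X'$. The proof of $R^k\pol A=0$ proceeds by contradiction, dimension-shifting down to a non-zero $\Ext^{k-1}_B(\K_\nu,A/x_mA)$, and then using clause (b) at level $k-1$ to land in $\Ext^{k-1}_A$, where Lemma~5.1 gives the contradiction; the five-term sequence then upgrades (b) to level $k$ from (a). Your parenthetical ``(valid inductively once the lower $R^j\pol$'s on these pieces have been shown to vanish)'' supplies only the hypothesis needed to write down the five-term sequence, not the Ext-comparison that moves you from the $B$-side to the $A$-side; without explicitly carrying clause (b) in the induction, the argument is circular.

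A secondary, more stylistic difference: you filter $\K[B]/A$ by localizing one variable $x_m$ at a time ($N_m=A[x_m^{-1},\ldots,x_n^{-1}]$) and then by $y_{m-1}$-powers, which forces iterated stripping of the remaining localizations and an accumulation of shifts before Lemma~5.1 applies. The paper instead filters by total denominator degree ($d^{-s}A$), picks $\alpha,\beta$ with $\deg\alpha-\deg\beta$ minimal subject to $\Ext^{k-1}_B(\K_\lambda, y^\alpha A/y^\beta A)\neq 0$, and concludes $\alpha=\beta+\ep_m$ in one step. Both filtrations should be workable, but the paper's minimality trick avoids the nested bookkeeping you flag as the ``main obstacle.''
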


\begin{proof} For $k>0$ we prove by induction the statement $P(k)$: for all polynomial $B$-modules $V',X'$ we have $R^i\pol X'=0$ for all $0< i < k$  and $\Ext^i_A(V',X')=\Ext^i_B(V',X')$, for all $0\leq i < k$.

\q Note that $P(1)$ is true since $\Hom_A(V',X')=\Hom_B(V',X')$ for polynomial $B$-modules $V',X'$.  We now assume $P(k)$ and deduce $P(k+1)$.   

\q We claim that $R^k\pol A=0$.   Assume, for a contradiction, that this is not the case. Then the $B$-socle of
$R^k\pol A$ is not zero so we have \\
$\Hom_B(\K_\lambda, R^k\pol A)\neq 0$ for some $\lambda\in\Lambda(n)$.  Dimension shifting, using the short exact sequence
$$0\to A\to \K[B]\to \K[B]/A\to 0$$
gives $\Ext^{k-1}_B(\K_\lambda,\K[B]/A)\neq 0$. Now $\K[B]$ has an ascending
exhaustive filtration $A\subseteq d^{-1} A\subseteq d^{-2} A\subseteq \cdots$
and $\Ext^{k-1}_B(\K_\lambda, -)$ commutes with direct limits so we must have
$\Ext^{k-1}_B(\K_\lambda, d^{-s} A/A)\neq 0$ for some $s>0$.  Hence we have
$\Ext_B^{k-1}(\K_\lambda,y^\alpha A/A)\neq 0$, for some $\alpha\in \Lambda(n)$.
We choose $\alpha,\beta\in \Lambda(n)$ with $\beta_i\leq \alpha_i$, for $1\leq
i\leq n$, and with $\deg(\alpha)-\deg(\beta)$ minimal subject to the condition
$\Ext^{k-1}_B(\K_\lambda,y^\alpha A/y^\beta A)\neq 0$. Note that in fact we must
have $\deg(\alpha)=\deg(\beta)+1$ since if $\gamma\in \Lambda(n)$ with
$\beta_i\leq \gamma_i\leq \alpha_i$, for all $i$, then we get a short exact sequence
$$0\to y^\gamma A/y^\beta A\to y^\alpha A/y^\beta A\to y^\alpha A/y^\gamma A\to 0$$
and so we must have 
\begin{equation*}
	\Ext^{k-1}_B(\K_\lambda,y^\gamma A/y^\beta A)\neq 0 \mbox{ or }
	\Ext^{k-1}_B(\K_\lambda,y^\alpha A/y^\gamma A)\neq 0.
\end{equation*}
	Thus we have $\alpha=\beta+\ep_m$, for some $1\leq m\leq n$. Hence we have $\Ext^{k-1}_B(\K_\lambda, y^{\beta+\ep_m}A/y^\beta A)\neq 0$ and so 
$$\Ext^{k-1}_B(\K_\lambda\otimes \K_{\beta+\ep_m}, \K_{\beta+\ep_m} \otimes (y^{\beta+\ep_m} A/y^\beta A))\neq 0.$$

Thus we have $\Ext^{k-1}_B(\K_{\nu},A/x_mA)\neq 0$, where $\nu=\lambda+\beta+\ep_m$. By the inductive hypothesis, we have  
\begin{equation*}
	\Ext^{k-1}_A(\K_\nu, A/x_mA)=\Ext^{k-1}_B(\K_{\nu},A/x_mA) \neq 0
\end{equation*}
and this contradicts Lemma~5.1.  Hence we have $R^k\pol A=0$.  Since $R^k\pol$ commutes with direct limits we also have $R^k\pol Z=0$, where $Z$ is a direct sum of copies of the right regular comodule $A$. Let $X'$ be any  polynomial $B$-module. Then $X'$ embeds in a direct sum of copies of $A$, via the comodule structure map. Thus we have a short exact sequence $0\to X'\to Z\to Y\to 0$, where $Z$ is a direct sum of copies of  $A$ and $Y$ is a polynomial $B$-module.  Now the derived functors of $\pol$ give the exact sequence 
$$R^{k-1}\pol Y \to R^k \pol X'\to R^k\pol Z=0.$$
But also, we have $R^{k-1}\pol Y=0$, from the inductive hypothesis, so that $R^k\pol X'=0$. 

\q Now for $V',X'\in \Comod(A)$ the $5$-term exact sequence~\eqref{fiveterm}  gives an isomorphism $\Ext^k_A(V',X')\to \Ext^k_B(V',X')$.
This completes the proof of $P(k+1)$. Hence $P(k)$ is true for all $k$. Thus we have $R^i\pol X=0$ for all $i>0$.  This proves (i). 

 (ii) follows from (i).

\end{proof}

\begin{corollary} A $B$-module is polynomial if and only if all its weights are polynomial.
\end{corollary}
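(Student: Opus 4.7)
The forward direction is a direct check: if $\cf(V)\subseteq \barA(n)$, restricting the coaction along the quotient $\K[B]\twoheadrightarrow \K[T(n)]$ (which sends $\barc_{ij}$ to $0$ for $i\neq j$ and $\barc_{ii}$ to $x_i$) lands the coefficient space in $\K[x_1,\ldots,x_n]$, so every weight of $V$ lies in $\Lambda(n)$.

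For the converse, assume all weights of $V$ lie in $\Lambda(n)$, set $P=\pol V$, and let $W=V/P$. Weights of $W$ are inherited from $V$, hence still polynomial, and it suffices to show $W=0$. My plan is first to prove $\pol W=0$, for which the key tool is the statement that any extension of polynomial $B$-modules is polynomial. Given a short exact sequence $0\to P'\to V'\to W'\to 0$ in $\Mod(B)$ with $P'$ and $W'$ polynomial, apply $\pol$: by Theorem~5.2(i) we have $R^1\pol(P')=0$, so the initial segment of the long exact sequence becomes
\begin{equation*}
0\to P'\to\pol V'\to W'\to 0,
\end{equation*}
forcing $\pol V'=V'$. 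Applied to the preimage in $V$ of any polynomial submodule $W'\subseteq W$, this preimage is a polynomial extension of $W'$ by $P$, hence contained in $P$ by maximality of $P$; so $W'=0$ and $\pol W=0$.

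To finish, suppose for contradiction that $W\neq 0$ and choose a nonzero finite-dimensional $B$-submodule $W_0\subseteq W$. Its $B$-socle is nonzero and semisimple, so a direct sum of one-dimensional simple modules $\K_{\lambda_i}$; each $\lambda_i$ is a weight of $W_0\subseteq W$, hence lies in $\Lambda(n)$, and so each $\K_{\lambda_i}$ is polynomial. The socle is then a nonzero polynomial submodule of $W_0$, contradicting $\pol W_0\subseteq \pol W=0$. Thus $W=0$ and $V=\pol V$ is polynomial.

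The pivotal step is the extension argument, whose real content is the vanishing $R^1\pol(P')=0$ provided by Theorem~5.2(i); without that input one has no leverage to splice polynomial quotients back onto $P$. Once the extension step is in hand, the weight/socle bookkeeping is essentially formal.
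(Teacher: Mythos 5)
Your proof is correct, and it reaches the same conclusion from the same underlying input (Theorem~5.2), but the organization is genuinely different from the paper's. The paper's converse argument first reduces to finite-dimensional $V$ via local finiteness, then inducts on $\dim V$: pick a one-dimensional submodule $L\cong\K_\lambda$, use the inductive hypothesis to get $V/L$ polynomial, and then invoke Theorem~5.2(ii) in the concrete form $\Ext^1_{A(n)}(V/L,L)\cong\Ext^1_B(V/L,L)$ to conclude that the extension $V$ of $V/L$ by $L$ is itself an $A(n)$-comodule. Your route instead splits off $P=\pol V$ at the top, formulates the key step as a general ``extensions of polynomial by polynomial are polynomial'' lemma proved via $R^1\pol(P')=0$ from Theorem~5.2(i) and the long exact sequence for $\pol$, and then finishes with a socle argument on a finite-dimensional submodule $W_0\subseteq W=V/P$. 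The two formulations of the crucial lemma are equivalent under the Grothendieck spectral sequence relating $\pol$ and $\Ext$, so the mathematical content is identical; what you gain is avoiding the explicit induction on dimension (your extension lemma is proved once, uniformly, even for infinite-dimensional $P'$), at the mild cost of having to argue separately that $\pol W=0$ forces $W=0$. Your forward direction (restricting the coaction along $\K[B]\twoheadrightarrow\K[T(n)]$) is also a slightly different bookkeeping from the paper's (embedding $V$ into a sum of copies of $A(n)$), but both are routine and correct.
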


\begin{proof} If $V$ is a polynomial $B$-module then $V$ embeds, via the comodule structure map, into a direct sum of copies of $A(n)$ and it follows that all weights of $V$ are polynomial. To prove that a $B$-module $V$ with all weights polynomial is polynomial it suffices, by local finiteness, to consider the case in which $V$ is finite dimensional. If $V$ is one dimensional then it is  isomorphic to $\K_\lambda$, for some $\lambda\in \Lambda(n)$, and hence polynomial. Suppose now that $V$ has dimension bigger than one and let $L$ be a one dimensional submodule. We may assume inductively  that $V/L$ is polynomial. We have a natural isomorphism $\Ext^1_{A(n)}(V/L,L)\to \Ext^1_B(V/L,L)$, by the theorem and it follows that every extension of $V/L$ by $L$ arises from an  $A(n)$-comodule, in particular $V$ is polynomial.
\end{proof}

Let $r\geq 0$. We define the negative (quantised) Borel-Schur algebra  $S^-(n,r)$ to be the  dual algebra of $\barA(n,r)$. We now obtain the quantised version of a theorem of Woodcock, \cite{WoodcockBielefeld}, Theorem 7.

\begin{corollary} Let $V$ and $X$ be polynomial $B$-modules which are homogeneous of degree $r$.  Then we have $\Ext^i_{S^-(n,r)}(V,X)=\Ext^i_B(V,X)$, for all $i\geq 0$.

\end{corollary}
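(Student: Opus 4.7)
The plan is to chain together three canonical isomorphisms. First, by Theorem~5.2(ii) applied to the polynomial $B$-modules $V$ and $X$ we have $\Ext^i_B(V,X)\cong\Ext^i_A(V,X)$ where $A=\barA(n)$, so everything reduces to computing $\Ext$ in the comodule category of $\barA(n)$ and comparing it with $\Ext$ over $S^-(n,r)$.

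The second step exploits the graded coalgebra structure of $\barA(n)$. The formula $\de(c_{ij})=\sum_k c_{ik}\otimes c_{kj}$ shows that $\de$ sends a product of $s$ coordinate elements to a sum of pure tensors each of whose factors is again a product of $s$ coordinate elements, so $\de$ takes $A(n,s)$ into $A(n,s)\otimes A(n,s)$; because the ideal $I$ is generated by the degree-one elements $c_{ij}$ with $i<j$, the same holds modulo $I$. The counit restricts to each summand non-trivially through the diagonal products $c_{i_1i_1}\cdots c_{i_si_s}$, so
\[\barA(n)=\bigoplus_{s\geq 0}\barA(n,s)\]
is a direct sum of subcoalgebras. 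Consequently $\Comod(\barA(n))$ splits orthogonally into blocks $\Comod(\barA(n,s))$: every $\barA(n)$-comodule decomposes canonically according to degree and there are no morphisms between distinct degrees. Moreover $\barA(n,r)$, being a direct summand of $\barA(n)$ as a right comodule over itself, is injective in $\Comod(\barA(n))$, and therefore any injective resolution of $X$ in $\Comod(\barA(n,r))$ remains an injective resolution in $\Comod(\barA(n))$. Applying $\Hom_{\barA(n)}(V,-)$ then computes the same groups as $\Hom_{\barA(n,r)}(V,-)$, giving $\Ext^i_{\barA(n)}(V,X)\cong\Ext^i_{\barA(n,r)}(V,X)$.

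Third, by Lemma~3.3 the coalgebra $\barA(n,r)$ is finite dimensional, with dual algebra $S^-(n,r)$ by definition. The construction recalled in Section~2 attaching to a right $\barA(n,r)$-comodule $(V,\tau)$ the left $S^-(n,r)$-module with action $\alpha\cdot v=(\id_V\otimes\alpha)\tau(v)$ is an equivalence of categories; the inverse, picking a basis of $\barA(n,r)$ and dualising to reconstruct $\tau$ from an action, involves only a finite sum and imposes no rationality condition, so the equivalence (stated in Section~2 for finite-dimensional objects) extends at once to all comodules. Since equivalences of abelian categories preserve $\Ext$, this yields $\Ext^i_{\barA(n,r)}(V,X)\cong\Ext^i_{S^-(n,r)}(V,X)$ in every degree, and chaining the three isomorphisms gives the corollary. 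The only point requiring any care is the second step; the main obstacle is checking that $\barA(n,r)$ really is a direct summand of $\barA(n)$ at the coalgebra level, but this is precisely what the homogeneity of $I$ and the form of $\de$ on the generators supply.
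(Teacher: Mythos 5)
Your proof is correct and follows the same route as the paper, which simply records the chain of canonical isomorphisms $\Ext^i_{S^-(n,r)}(V,X)=\Ext^i_{\barA(n,r)}(V,X)=\Ext^i_{\barA(n)}(V,X)=\Ext^i_B(V,X)$ without elaboration. You have merely filled in the standard justifications for each link (Theorem~5.2(ii), the block decomposition of $\Comod(\barA(n))$ induced by the coalgebra grading, and the comodule/module equivalence for the finite-dimensional coalgebra $\barA(n,r)$), all of which are accurate.
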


\begin{proof} We have 
$$\Ext^i_{S^-(n,r)}(V,X)=\Ext^i_{\barA(n,r)}(V,X)=\Ext^i_{\barA(n)}(V,X)=\Ext^i_B(V,X).$$

\end{proof}

\section{A vanishing theorem for polynomial \\ {modules}}

\q To save on notation we shall abbreviate    $G(n),B(n),T(n)$ to $G,B,T$ where confusion seems unlikely.

\q We shall need a  bound for the vanishing of $R^i\ind_B^G \K_\lambda$, for $\lambda\in \Lambda(n)$.  We do this by an inductive argument using the function $b$ that we now introduce. For each $\lambda\in \Lambda(n)$ we shall define a non-negative integer $b(\lambda)$.  We define $b$ on $\Lambda(n,r)$, for $r\geq 0$ by descending induction on the dominance order.  If $\lambda$ is dominant or if  $\lambda_j-\lambda_{j+1}=-1$ for some $1\leq j<n$ we set  $b(\lambda)=0$. In particular this defines $b(\lambda)$ for $\lambda=(r,0,\ldots,0)$.  If $\lambda\in \Lambda(n,r)$ is not of the form already considered then we have $\lambda_j-\lambda_{j+1}=-m_j$, with $m_j\geq 2$, for some $1\leq j<n$. We define 
$$b_j(\lambda)=\max\{ b(\lambda+t(\ep_j-\ep_{j+1}))\vert 0< t  < m_j\}+1.$$
and 
\begin{equation}
	\label{blambda}
	b(\lambda)=\min\{b_j(\lambda) \vert 1\leq j<n,
	\lambda_j-\lambda_{j+1}\leq -2\}.
\end{equation}
	By an easy induction one sees that if $\lambda=(\lambda_1,\ldots,\lambda_m,0,\ldots,0)$ with \\
 $\lambda_1,\ldots,\lambda_m\neq 0$ then $b(\lambda)=b(\mu)$, where $\mu=(\lambda_1-1,\ldots,\lambda_m-1,0,\ldots,0)$. 

\begin{lemma} For $\lambda\in \Lambda(n)$ we have $b(\lambda)\leq \deg(\lambda)-\l(\lambda)$.

\end{lemma}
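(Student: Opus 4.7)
The plan is a double induction: outer on $\deg(\lambda)$ and inner, within a fixed degree, on descending dominance order. The base $\deg(\lambda)=0$ is immediate, since then $\lambda=0$, $b(\lambda)=0$, and $\l(\lambda)=0$. In the inductive step, the cases where $\lambda$ is dominant or $\lambda_j-\lambda_{j+1}=-1$ for some $j$ are trivial: in both, $b(\lambda)=0$ by definition, while $\deg(\lambda)\geq\l(\lambda)$ always. So I may assume $b(\lambda)>0$, which means $\lambda$ is not dominant and no adjacent difference equals $-1$.

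The strategy is to exhibit a single $j$ with $\lambda_j-\lambda_{j+1}\leq -2$ such that $b_j(\lambda)\leq\deg(\lambda)-\l(\lambda)$. Write $d=\deg(\lambda)$ and $m=\l(\lambda)$, and separate two cases according to whether $\lambda$ has the form $(\lambda_1,\ldots,\lambda_m,0,\ldots,0)$ with $\lambda_i\neq 0$ for all $i\leq m$.

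Assume first that $\lambda$ does \emph{not} have this form. Then some zero entry of $\lambda$ is followed, possibly after further zeros, by a non-zero entry, and one can pick $j$ with $\lambda_j=0$ and $\lambda_{j+1}\neq 0$; by the running assumption on $\lambda$ we must have $\lambda_{j+1}\neq 1$, hence $\lambda_{j+1}\geq 2$, so $\lambda_j-\lambda_{j+1}\leq -2$. For $0<t<m_j=\lambda_{j+1}$, the weight $\mu(t)=\lambda+t(\ep_j-\ep_{j+1})$ satisfies $\mu(t)_j=t>0$ and $\mu(t)_{j+1}=\lambda_{j+1}-t>0$, so $\l(\mu(t))=\l(\lambda)+1$, and clearly $\mu(t)\gtdom\lambda$. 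The inner induction hypothesis yields $b(\mu(t))\leq d-m-1$, hence $b_j(\lambda)\leq d-m$, as required. Suppose instead that $\lambda$ does have the above form with $m\geq 1$. Then I invoke the invariance $b(\lambda)=b(\mu)$, with $\mu=(\lambda_1-1,\ldots,\lambda_m-1,0,\ldots,0)$, noted just before the lemma. Since $\deg(\mu)=d-m<d$, the outer induction hypothesis applies and gives $b(\mu)\leq\deg(\mu)-\l(\mu)\leq d-m$.

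The main obstacle is that descending induction on dominance alone does not suffice: when $\lambda_j>0$ at the chosen index, the weights $\mu(t)$ retain the same length as $\lambda$, and the inner hypothesis only gives $b(\mu(t))\leq d-m$, producing the too-weak conclusion $b_j(\lambda)\leq d-m+1$. The invariance is precisely what lets one short-circuit this difficulty by reducing the remaining case to a strictly smaller degree, where the outer hypothesis is available.
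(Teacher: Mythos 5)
Your proof is correct and is, in substance, the paper's argument made fully explicit. The paper argues by minimal counterexample (minimal $n$, then minimal degree in $\Lambda(n)$); it disposes of the case $\lambda=(\lambda_1,\ldots,\lambda_m,0,\ldots,0)$ with all $\lambda_i\neq0$ by the same invariance $b(\lambda)=b(\lambda_1-1,\ldots,\lambda_m-1,0,\ldots,0)$ that you use, and in the remaining case it chooses the same $j$ with $\lambda_j=0$, $\lambda_{j+1}\geq 2$ and exploits $\l(\mu(t))=\l(\lambda)+1$. Where your write-up is cleaner: the paper asserts $b(\mu(t))\leq\deg(\mu(t))-\l(\mu(t))$ for $\mu(t)\gtdom\lambda$ of the \emph{same} degree without having explicitly chosen $\lambda$ dominance-maximal among minimal-degree counterexamples; your explicit double induction (outer on degree, inner on descending dominance within a fixed degree) supplies precisely this justification. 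Your closing remark correctly identifies that the dominance induction alone would only give the off-by-one bound $d-m+1$ in the ``block of nonzeros'' case, so the degree-reducing invariance is genuinely needed. You also observe, implicitly, that the paper's outer induction on $n$ is dispensable, since the $n=1$ case is already covered by the $\lambda$ dominant subcase.
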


\begin{proof}    Since  $\Lambda(1)$ consists of dominant weights the result holds for $n=1$. Suppose that it is false and let $n$ be minimal for which it fails.
Let  $\lambda\in \Lambda(n)$ be a counterexample of smallest possible degree . If $\lambda=(\lambda_1,\ldots,\lambda_m,0,\ldots,0)$ with $\lambda_1,\ldots,\lambda_m\neq 0$ then
\begin{align*}
	b(\lambda)&=b(\lambda_1-1,\ldots,\lambda_m-1,0,\ldots,0)\cr
&\leq \deg(\lambda_1-1,\ldots,\lambda_m-1,0,\ldots,0)
\cr &\phantom{\le\le} - \l(\lambda_1-1,\ldots,\lambda_m-1,0,\ldots,0)\cr
&= \deg(\lambda)-m-\l(\lambda_1-1,\ldots,\lambda_m-1,0,\ldots,0)\cr
&\leq \deg(\lambda)-m=\deg(\lambda)-\l(\lambda).
\end{align*}
Hence there exists some $1\leq j<n$ such that $\lambda_j=0$, $\lambda_{j+1}> 0$.  If $\lambda_{j+1}=1$ then $b(\lambda)=0$ and $\lambda$ is not a counterexample. Hence we have $\lambda_{j+1}=m\geq 2$.  But now we have
\begin{align*}b(\lambda)&\leq b_j(\lambda)\cr
&=\max\{b(\lambda+t(\ep_i-\ep_{i+1})\vert 0< t < m\}+1.
\end{align*}
We consider $\mu=\lambda+t(\ep_j-\ep_{j+1})$ with $0<t<m$.   Note that $\mu$ has entry $t\neq 0$ in the $j$th position and entry $\lambda_{j+1}-t\geq \lambda_{j+1}-(m-1)=1$ in the $(j+1)$st position. Moreover, $\lambda$ and $\mu$ agree in all positions other than $j$ and $j+1$. Hence we have $\l(\mu)=\l(\lambda)+1$. Moreover, $\mu$ is greater than $\lambda$, in the dominance order. Hence we have 
$$b(\mu)\leq \deg(\mu)- \l(\mu)=\deg(\lambda)-\l(\lambda)-1$$
i.e., $b(\mu)+1\leq \deg(\lambda)-\l(\lambda)$.

\q Since this is true for all $\mu$ of the form $\lambda+t(\ep_i-\ep_{i+1})$
with $0<t<m$,  from~\eqref{blambda},  we have $b(\lambda)\leq \deg(\lambda)-\l(\lambda)$.

\end{proof}

\begin{lemma} (i) For $\lambda\in \Lambda(n)$ we have $R^i\ind_B^G \K_\lambda=0$ for all $i>b(\lambda)$, and hence for $i\geq \deg(\lambda)>0$.

(ii) If $V$ is a polynomial  $B$-module of degree $r$ then $R^i\ind_B^G V=0$ for $i>r$.
\end{lemma}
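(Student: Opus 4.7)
My plan is to prove (i) by descending induction on the dominance order in $\Lambda(n,r)$, reducing to a dominance-greater weight via the Grothendieck spectral sequence coming from the transitivity of induction $\ind_B^G=\ind_{P_j}^G\circ\ind_B^{P_j}$ for a suitable minimal parabolic $P_j$. The two external inputs will be Kempf-type vanishing for quantum $G(n)$ applied to dominant weights and the rank-one quantum $\mathrm{SL}_2$ computation of $R^q\ind_B^{P_j}\K_\lambda$. Part (ii) will then follow from (i) by a routine filtration and long-exact-sequence argument.

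I first dispose of the two subcases of $b(\lambda)=0$. If $\lambda\in\Lambda^+(n)$, quantum Kempf vanishing (available within the framework of \cite{SD37}) gives $R^i\ind_B^G\K_\lambda=0$ for $i>0$. If instead $\lambda_j-\lambda_{j+1}=-1$ for some $j$, then the rank-one quantum $\mathrm{SL}_2$ calculation gives $R^q\ind_B^{P_j}\K_\lambda=0$ for every $q\geq 0$, and the Grothendieck spectral sequence collapses to force $R^i\ind_B^G\K_\lambda=0$ for all $i$.

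For the inductive step, choose $j$ realising the minimum $b(\lambda)=b_j(\lambda)$, so that $\lambda_j-\lambda_{j+1}=-m_j$ with $m_j\geq 2$. The rank-one calculation now identifies $R^q\ind_B^{P_j}\K_\lambda$ as $0$ for $q\neq 1$ and as $\nabla_j(s_j\cdot\lambda)$ for $q=1$, where $s_j\cdot\lambda=\lambda+(m_j-1)(\ep_j-\ep_{j+1})$ is the $\rho$-shifted reflection. The Grothendieck spectral sequence thus degenerates to
\[
R^n\ind_B^G\K_\lambda\;\cong\;R^{n-1}\ind_{P_j}^G\nabla_j(s_j\cdot\lambda)\qquad(n\geq 1).
\]
Because $s_j\cdot\lambda$ is dominant for $P_j$, we have $\nabla_j(s_j\cdot\lambda)=\ind_B^{P_j}\K_{s_j\cdot\lambda}$ with $\K_{s_j\cdot\lambda}$ acyclic for $\ind_B^{P_j}$; a second application of the transitivity spectral sequence gives $R^{n-1}\ind_{P_j}^G\nabla_j(s_j\cdot\lambda)\cong R^{n-1}\ind_B^G\K_{s_j\cdot\lambda}$. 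The weight $s_j\cdot\lambda$ is strictly greater than $\lambda$ in the dominance order, so the inductive hypothesis applies and gives vanishing of the right-hand side whenever $n-1>b(s_j\cdot\lambda)$. Since $s_j\cdot\lambda$ occurs as the $t=m_j-1$ term inside the max defining $b_j(\lambda)$, we have $b(\lambda)=b_j(\lambda)\geq b(s_j\cdot\lambda)+1$, so the assumption $n>b(\lambda)$ forces $n-1>b(s_j\cdot\lambda)$ and completes the inductive step.

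The consequence $R^i\ind_B^G\K_\lambda=0$ for $i\geq\deg(\lambda)>0$ then follows from Lemma~6.1, since $\lambda\neq 0$ gives $\l(\lambda)\geq 1$ and hence $b(\lambda)\leq\deg(\lambda)-1$. For (ii) I reduce to finite-dimensional $V$ by local finiteness, take a $B$-composition series $0=V_0\subset V_1\subset\cdots\subset V_k=V$ whose simple subquotients are one-dimensional characters $\K_{\mu_s}$ with $\mu_s\in\Lambda(n,r)$, and use the long exact sequence of $R^i\ind_B^G$ on each short exact sequence $0\to V_{s-1}\to V_s\to\K_{\mu_s}\to 0$ together with part (i) and the bound $b(\mu_s)\leq\deg(\mu_s)-\l(\mu_s)\leq r$ from Lemma~6.1. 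The step I anticipate as the main obstacle is the precise rank-one quantum Borel-Weil-Bott identification of $R^1\ind_B^{P_j}\K_\lambda$ with $\nabla_j(s_j\cdot\lambda)$ in the $-m_j$ regime; once that is granted, the rest is organised spectral-sequence bookkeeping.
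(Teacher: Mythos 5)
Your proof reproduces the paper's base cases exactly, and the deduction of (ii) from (i) via a $B$-composition series and the long exact sequence also matches. The inductive step, however, takes a genuinely different route, and the difference is exactly the point you flag as "the main obstacle."

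You route the induction through the quantum rank-one Borel--Weil--Bott identification: you claim $R^{q}\ind_B^{P_j}\K_\lambda=0$ for $q\neq 1$ and $R^{1}\ind_B^{P_j}\K_\lambda\cong\nabla_j(s_j\cdot\lambda)$, then use transitivity twice to conclude $R^{n}\ind_B^{G}\K_\lambda\cong R^{n-1}\ind_B^{G}\K_{s_j\cdot\lambda}$. That reduction is efficient and the bookkeeping with $b(s_j\cdot\lambda)\leq b(\lambda)-1$ is correct. But the rank-one BWB in the $-m$ regime --- including the degree-$\geq 2$ vanishing and the identification of $R^1$ --- is \emph{not} among the tools supplied by the paper's cited reference (it cites only Kempf vanishing and the $\lambda_j-\lambda_{j+1}=-1$ vanishing from Lemma~3.1(ii) of \cite{SD37}), and the paper's proof is constructed precisely so as to avoid invoking it. Granting it is the gap.

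What the paper does instead: it forms the $P_j$-module $N=\nabla_j\bigl(\lambda+(m-1)(\ep_j-\ep_{j+1})+\delta\bigr)\otimes\K_{-\delta}$. Because $(-\delta)_j-(-\delta)_{j+1}=-1$, the already-available Lemma~3.1(ii) gives $R^{i}\ind_B^{P_j}\K_{-\delta}=0$ for all $i$; the tensor identity then makes $N$ acyclic for $\ind_B^{P_j}$, hence (by transitivity) for $\ind_B^{G}$. The $B$-module $N$ has weights $\lambda+t(\ep_j-\ep_{j+1})$, $0\leq t\leq m-1$, with $\K_\lambda$ as a bottom submodule, so the short exact sequence $0\to\K_\lambda\to N\to Q\to 0$ yields $R^{i}\ind_B^{G}\K_\lambda\cong R^{i-1}\ind_B^{G}Q$, and the induction closes using \emph{all} of the weights $\lambda+t(\ep_j-\ep_{j+1})$, $1\leq t\leq m-1$, each of which has $b$-value $\leq b(\lambda)-1$, not just the single reflected weight $s_j\cdot\lambda$. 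In other words, the paper trades your BWB identification for the embedding-into-an-acyclic-module trick driven by the $\pm\delta$-twist, which only requires the weaker and already-cited $-1$ vanishing. If you want your version to stand on its own, you would have to supply a proof of the quantum rank-one BWB in this framework; otherwise, the $\delta$-twist construction is the cleaner path given what is actually available.
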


\begin{proof} (i) We argue by induction on $b(\lambda)$. If $b(\lambda)=0$ then either $\lambda$ is dominant or $\lambda_j-\lambda_{j+1}=-1$ for some $1\leq j<n$, and $R^i\ind_B^G\K_\lambda=0$ for $i>0$, \cite{SD37}, Theorem 3.4 and Lemma 3.1, (ii), 
 and the result holds. So suppose $b(\lambda)>0$ and the result holds for all $\mu\in \Lambda(n)$ with $b(\mu)<b(\lambda)$. We have $b(\lambda)=b_j(\lambda)+1$ for some $1\leq j<n$ with $\lambda_j-\lambda_{j+1}=-m$, $m\geq 2$. Consider the module $\nabla_j(\lambda+(m-1)(\ep_j-\ep_{j+1})+\delta)$.
Writing $\mu=\lambda+(m-1)(\ep_j-\ep_{j+1})+\delta$ we have 
$$\mu_j-\mu_{j+1}=-m+2(m-1)+1=m-1.$$
  Hence $\nabla_j(\lambda+(m-1)(\ep_j-\ep_{j+1})+\delta)$ has weights $\lambda+(m-1)(\ep_j-\ep_{j+1})+\delta$, $\lambda+(m-2)(\ep_j-\ep_{j+1})+\delta$, $\ldots$, $\lambda+\de$, each occurring with multiplicity $1$. Hence the module $\nabla_j(\lambda+(m-1)(\ep_j-\ep_{j+1}+\de)\otimes \K_{-\de}$ has bottom weight $\lambda$ and we have a short exact sequence of $B$-modules
  \begin{equation}
	  \label{klambda}
   0\to \K_\lambda\to \nabla_j(\lambda+m(\ep_j-\ep_{j+1})+\delta)\otimes
   \K_{-\delta}\to Q\to 0.
   \end{equation}
	  where $Q$ has weights $\lambda+i(\ep_j-\ep_{j+1})$, with $1\leq i\leq m-1$.

\q Now  we have $R^i\ind_B^{P_j} \K_{-\delta}=0$ for all $i$, by \cite{SD37}, Lemma~3.1~(ii) and so by the 
 tensor identity and \cite{SD37}, Proposition~1.3~(iii), we have 
 $$R^i\ind_B^{P_j}(\nabla_j(\lambda+(m-1)(\ep_j-\ep_{j+1})+\delta)\otimes \K_{-\delta})=0$$
 for all $i$.   By the spectral sequence arising from the  transitivity of
 induction, \cite{SD37}, Proposition 1.2, we get
 $R^i\ind_B^G(\nabla_j(\lambda+m(\ep_j-\ep_{j+1})+\delta)\otimes
 \K_{-\delta})=0$ for all $i$.   Hence from~\eqref{klambda}  we get $R^i\ind_B^G \K_\lambda= R^{i-1}\ind_B^G Q$.
 
 \q But a   weight $\nu$ of $Q$ has the form $\lambda+t(\ep_j-\ep_{j+1})$, with $1\leq t\leq m-1$, and $b(\nu)\leq b(\lambda)-1$. So that for $i>b(\lambda)$ we have $i-1>b(\nu)$ and hence $R^{i-1}\ind_B^G \K_\nu=0$, by the inductive hypothesis. Since this holds for all weights of $Q$, i.e., for all composition factors $\K_\nu$ of $Q$,  we get $R^{i-1}\ind_B^GQ=0$, from the long exact sequence,  and hence $R^i\ind_B^G \K_\lambda=0$

(ii) This follows from (i) and the long exact sequence.

\end{proof}

\section{Kempf vanishing for quantised  Schur algebras}

\q At this point we introduce the natural left $G$-module for use  later in
this section. 
We write $E$ for the $\K$-vector space with basis $e_1,\ldots,e_n$. Then $E$ is
a $G$-module via the comodule structure map $\tau\colon E\to E\otimes \K[G]$ defined by $\tau(e_i)=\sum_{j=1}^n e_j\otimes c_{ji}$, $1\leq i\leq n$. We shall also need the symmetric powers $S^rE$ and exterior powers $\tbw^rE$ of $E$. We recall the construction from \cite{SD25} and \cite{SD42}.  Let $T(E)$ be the tensor algebra $\oplus_{r\geq 0} E^{\otimes r}$. Thus $T(E)$ is a  graded $\K$-algebra, in such a way that each $e_i\in E$ has degree $1$.
The ideal generated by all $e_ie_j-e_je_i$, $1\leq i,j\leq n$, is homogeneous
and is a $G$-submodule, so the (usual) symmetric algebra $S(E)$ inherits a grading 
$S(E)=\bigoplus_{r\geq 0} S^r(E)$ and each $S^r(E)$ is a $G$-submodule of $S(E)$.
 Also, the ideal of $T(E)$ generated by the elements $e_i^2, e_ke_l+qe_le_k$, $1\leq i\leq n$, $1\leq k<l\leq n$, is homogeneous and a $G$-submodule and we write $\tbw(E)$ for the quotient algebra. Thus $\tbw(E)$ inherits  a grading $\tbw(E)=\oplus_{r\geq 0} \tbw^r(E)$ and each $\tbw^r(E)$ is a $G$-submodule.
 
 \q For $i=(i_1,\ldots,i_r)\in I(n,r)$ we write  $e_i$ for  $e_{i_1}\otimes
 \cdots \otimes e_{i_r}\in E^{\otimes r}$ and $\hate_i$ for the  image of $e_i$  in
 $\tbw^r(E)$. The module $\tbw^r(E)$ has basis $\hate_i$, with $i\in I(n,r)$, running over all maps with $i_1>\cdots > i_r$.

\begin{proposition} (i) Let $1\leq r\leq n$ and let $L_r$ be the simple $B$-module with weight $\ep_r$. Then for any $B$-module $M$ the $\K$-linear map 
$\phi_M\colon M\otimes L_r\to L_r\otimes M$ given by 
$$\phi(m\otimes l)=q^{\alpha_1+\cdots +\ \alpha_r} l\otimes m$$
for $\alpha=(\alpha_1,\ldots,\alpha_n) \in X(n)$ and $m\in M^\alpha$, 
is a $B$-module isomorphism.

(ii) For any $B$-module $M$ and one dimensional $B$-module $L$  the $B$-modules $M\otimes L$ and $L\otimes M$ are isomorphic.

\end{proposition}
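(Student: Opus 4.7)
The plan is to verify part~(i) by a direct check of the comodule map equation, reducing it to a commutation identity in $\barA(n)$, and then to deduce~(ii) by iterating~(i).

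For~(i), write $m \in M^\alpha$ and $\mu_M(m) = \sum_\beta m_\beta \otimes a_\beta$ with $m_\beta \in M^\beta$ and $a_\beta$ lying in the $T \times T$ bi-weight space $(\beta, \alpha)$ of $\barA(n)$. Using $\mu_{L_r}(l) = l \otimes \barc_{rr}$ and unwinding the tensor product comodule structures, the comodule map equation for $\phi_M$ reduces to the single identity
\begin{equation*}
a_\beta \barc_{rr} = q^{(\alpha_1+\cdots+\alpha_r) - (\beta_1+\cdots+\beta_r)} \barc_{rr} a_\beta.
\end{equation*}

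I would prove this by a case analysis on standard monomials. For a single generator $\barc_{ij}$ (with $i \geq j$), the defining relations of $A(n)$ give $\barc_{ij}\barc_{rr} = q\,\barc_{rr}\barc_{ij}$ when $i > r$ and $j \leq r$, and $\barc_{ij}\barc_{rr} = \barc_{rr}\barc_{ij}$ in all other cases. The case needing care is $i > r,\, j > r$, where relation~3 introduces a correction term $(q-1)\,\barc_{rj}\barc_{ir}$; but $j > r$ forces $\barc_{rj} = 0$ in $\barA(n)$, so the correction vanishes, and the analogous collapse handles the case $i < r$ via $\barc_{ir} = 0$. Hence for a standard monomial $c^M = \prod \barc_{ij}^{M_{ij}}$ in the bi-weight space $(\beta, \alpha)$ we obtain $c^M \barc_{rr} = q^N \barc_{rr} c^M$ with $N = \sum_{i > r,\, j \leq r} M_{ij}$. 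Using the row and column sum identities $\beta_i = \sum_j M_{ij}$ and $\alpha_j = \sum_i M_{ij}$, together with the vanishing $M_{ij} = 0$ for $i < j$ (which annihilates the quadrant $i \leq r < j$), a short count identifies $N$ with $(\alpha_1+\cdots+\alpha_r) - (\beta_1+\cdots+\beta_r)$, completing the proof of the identity and hence of~(i).

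Part~(ii) follows by iterating~(i). Any one-dimensional $B$-module is of the form $\K_\lambda$ for some $\lambda \in X(n)$. When $\lambda \in \Lambda(n)$, $\K_\lambda$ is isomorphic to a tensor product of the simple modules $L_1, \dots, L_n$ with respective multiplicities $\lambda_1, \dots, \lambda_n$, so repeated application of~(i) yields $M \otimes \K_\lambda \cong \K_\lambda \otimes M$. For arbitrary $\lambda \in X(n)$, write $\lambda = \mu - k(1,\ldots,1)$ with $\mu \in \Lambda(n)$ and $k \geq 0$; since $d = x_1\cdots x_n$ is invertible in $\K[B]$, the module $\K_{-(1,\ldots,1)}$ exists as a one-dimensional $B$-module, and combining the tensor-commutativity for $\K_{k(1,\ldots,1)}$ with that of its inverse gives the result.

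The main technical obstacle is verifying that the correction terms produced by relation~3 of $A(n)$ all vanish on passage to $\barA(n)$; this is precisely what makes the clean scalar commutation formula valid and explains why the proposition is more transparent in the Borel quotient than it would be in the full quantum coordinate algebra.
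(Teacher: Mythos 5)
Your proof is correct, but it takes a genuinely different route from the paper's in part~(i). The paper defines a class of \emph{admissible} $B$-modules (those for which $\phi_M$ is a comodule map), shows this class is closed under tensor products, submodules, quotients, and direct sums, verifies one-dimensional modules and the natural module $E$ are admissible by hand, and then bootstraps: $S^\beta E$ admissible, every polynomial $G$-module embeds in a direct sum of such injectives (invoking \cite{SD42}), hence all $G$-modules are admissible, hence $\K[B]$ (a quotient of $\K[G]$) is admissible, hence every $B$-module (embedded in a direct sum of copies of $\K[B]$ via its structure map) is admissible. Your approach instead verifies the comodule equation directly, reducing it to the $q$-commutation identity $a\,\barc_{rr} = q^{(\alpha_1+\cdots+\alpha_r)-(\beta_1+\cdots+\beta_r)}\,\barc_{rr}\,a$ for $a$ of bi-weight $(\beta,\alpha)$, and proving it by generator-by-generator commutation with the observation that all correction terms from relation~3 die in $\barA(n)$ because the offending coordinate element $\barc_{ij}$ has $i<j$. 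Your counting argument, $N = \sum_{i>r,\,j\leq r}M_{ij} = (\alpha_1+\cdots+\alpha_r)-(\beta_1+\cdots+\beta_r)$, exploits the vanishing of $M_{ij}$ in the quadrant $i\leq r<j$ and is a nice touch. What your approach buys: it is shorter, self-contained, and makes the scalar $q^{\alpha_1+\cdots+\alpha_r}$ transparent; the external input is just the monomial basis of $\barA(n)$ (Lemma~3.3). What the paper's approach buys: it avoids computation with the defining relations altogether, at the cost of relying on the structural result that the $S^\beta E$ are injective cogenerators of the polynomial category. For part~(ii) you and the paper do essentially the same thing (reduce to polynomial $\lambda$, then handle a general $\lambda\in X(n)$ via invertibility of the determinant character); one small remark is that your phrase ``tensor-commutativity for $\K_{k(1,\ldots,1)}$ with that of its inverse'' should really be spelled out as the conjugation trick the paper uses --- one cannot quote~(i) for $\K_{-(1,\ldots,1)}$ directly since its weight is not in $\Lambda(n)$ --- but the idea is clearly the right one.
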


\begin{proof} (i) Certainly $\phi_M$ is a linear isomorphism so it remains to show that it is $B$-module homomorphism.  We shall call a $B$-module $M$ admissible if $\phi_M$ is a $B$-module homomorphism. So the point is to show that all $B$-modules are admissible.  
Note also that admissibility is preserved by isomorphism.  Let $M$ and $N$ be $B$-modules. Then the map $\phi_{M\otimes N}\colon  M\otimes N \otimes L \to L\otimes M\otimes N$ factorizes as $(\phi_M\otimes \id_N)\circ (\id_M\otimes \phi_N)$ so that admissibility is preserved under tensor products.  

\q Suppose now that $M$ is a submodule of $N$.  Then  the map $\phi_N\colon N\otimes L_r\to L_r\otimes N$ restricts to $\phi_M\colon M\otimes L_r\to M\otimes L_r$. Thus if $N$ is admissible then so is $M$.  Similarly, if $N$ is admissible then so is the quotient $N/M$.   By the  local finiteness of $B$-modules it suffices to prove that finite dimensional $B$-modules are admissible.  

\q We now prove that all $G$-modules are admissible.   Note that if $M$ is one dimensional then the twisting map $\theta\colon M\otimes  L_r\to L_r\otimes  M$, given by 
$\theta(m\otimes l)=(l\otimes m)$, for $l\in L_r$, $m\in M$, is a $B$-module map and $\phi_M$ is a scalar multiple of $\theta$. Hence one dimensional $B$-modules are admissible. 
Now if $M$ is any finite dimensional $G$-module then  $D^{\otimes r}\otimes M$ is polynomial for some $r\geq 0$. Hence $M$ is isomorphic to a module of the form 
$Z\otimes N$, where $Z$ is the dual of $D^{\otimes r}$ and $N$ is polynomial. Hence it suffices to prove that finite dimensional polynomial modules are admissible. 

\q Note that if $M=M_1\oplus \cdots \oplus M_t$, for $G$-modules $M_1,\ldots,M_t$ then $\phi_M=\phi_{M_1}\oplus\cdots \oplus \phi_{M_t}$ so that if each $M_i$ is admissible then so is $M$. Now if $M$ is a polynomial $G$-module then, for some $r\geq 0$,  we may write $M=M_0 \oplus\cdots \oplus M_s$, where $M_r$ is polynomial of degree $r$, for $0\leq r\leq s$. Hence it suffices to prove that for each $r$, all polynomial $G$-modules of degree $r$ are admissible. 

\q We now check that the natural module $E$ is admissible.  Let $Z$ be the subspace of $\K[B]$ spanned by the elements $\barc_{ni} \barc_{rr}$, $1\leq i\leq n$. It is  seen from the defining relations that  $Z$ is  also spanned by $\barc_{rr}\barc_{ni}$, $1\leq i\leq n$.  We fix a non-zero element $l_0$ of $L_r$. 
The subspace $Z$ is a left $B$-submodule of $\K[B]$ and we have $B$-module
isomorphisms $\theta\colon  E\otimes L_r\to  Z$, $\eta\colon L_r \otimes E\to Z$ satisfying  
$\theta(e_i\otimes l_0)=\barc_{ni}\barc_{rr}$ and 
$\eta(l_0\otimes e_i)=\barc_{rr}\barc_{ni}$, for $1\leq i\leq n$.

\q Hence we have an isomorphism $\psi=\eta^{-1}\circ \theta\colon E\otimes L_r \to L_r\otimes E$.  We consider first the case $r=n$. The element  $\barc_{nn}$ commutes with the elements $\barc_{ni}$ and $\psi\colon E\otimes L_n \to  L_n\otimes E$ is the twisting map, taking $e_i\otimes l_0\to l_0\otimes e_i$, for $1\leq i\leq n$. The map $\phi_E$ is $q\psi$ and hence is a homomorphism. Now suppose that $r<n$.   For $1\leq i\leq r$ we have $\theta(e_i\otimes l_0)=\barc_{ni}\barc_{rr}=q\barc_{rr}\barc_{ni}=\eta(q l_0\otimes e_i)$ and hence $\psi( e_i\otimes l_0)=q l_0\otimes e_i$. For $i>r$ we have $\theta(e_i\otimes l_0)=\barc_{rr}\barc_{ni}=\barc_{ni}\barc_{rr}$ (from the defining relations) and so 
$\theta(e_i\otimes l_0)=\eta(l_0\otimes e_i)$.  We have shown that the $B$-module homomorphism $\psi\colon E\otimes L_r \to L_r\otimes E$ is given by
$$\psi(e_i\otimes l_0)=\begin{cases} q l_0\otimes e_i, & \hbox{ if } 1\leq i\leq r;\cr
l_0\otimes e_i, & \hbox{ if } r< i\leq n.
\end{cases}$$
Thus $\psi=\phi_E$ and therefore $\phi_E$ is a $B$-module homomorphism.

\q Since the class of admissible modules is closed under taking tensor products and quotients  we get that the $j$th symmetric power $S^jE$ is admissible for all $j\geq 0$. Now we get that for any $\beta=(\beta_1,\ldots,\beta_n)\in \Lambda(n)$ the module $S^\beta E=S^{\beta_1}E\otimes \cdots\otimes  S^{\beta_n}E$ is admissible. But these modules $S^\beta E$ are injective in the category of polynomial $G$-modules and every finite dimensional polynomial $G$-module embeds in a direct sum of copies of the modules $S^\beta E$, \cite{SD42}, Section 2.1. Hence every finite dimensional polynomial module is admissible and hence all $G$-modules are admissible. 

\q The left regular $B$-module $\K[B]$ is admissible since it is the image of  the restriction homomorphism $\K[G]\to \K[B]$. Hence a direct sum of copies of $\K[B]$ is admissible.  Let $M$ be a $B$-module. Then  the structure map $\tau\colon M\to M\otimes \K[B]$ embeds $M$ into a direct sum of copies of the left regular $B$-module and hence $M$ is admissible. This complete the proof of (i).

(ii) We have $L=\K_\lambda$, for some $\lambda\in\Lambda(n)$.    If $\lambda=0$ there is nothing to prove.  If $\lambda\neq 0$ and $\lambda\in \Lambda(n)$  we may write $\lambda=\mu+\ep_r$, for some $1\leq r\leq n$ and $\mu\in \Lambda(n)$.  Then we have $\K_\lambda  \cong  \K_\mu\otimes L_r$ so  we get 
$$\K_\lambda \otimes M\cong \K_\mu\otimes L_r\otimes M\cong \K_\mu\otimes M\otimes L_r$$
 by part (i) and now it follows by induction on degree that $\K_\lambda \otimes M$ is isomorphic to $M\otimes \K_\lambda$.  Finally, if $\lambda=\mu-\tau$, where $\mu,\tau\in \Lambda(n)$,  then we have that 
 \begin{align*}\K_\tau \otimes \K_\lambda \otimes M&\cong \K_\mu\otimes M\cong M\otimes \K_\mu\cr
 &\cong M\otimes \K_\tau\otimes \K_\lambda \cong \K_\tau\otimes M\otimes \K_\lambda.
 \end{align*}
  So we have that $\K_\tau\otimes \K_\lambda \otimes M$ is isomorphic to $\K_\tau\otimes M\otimes \K_\lambda$ and tensoring on the left with the dual of $\K_\tau$ gives the desired  result.

\end{proof}

\begin{corollary}   Let $t\geq 1$. Let $V_j$ be a polynomial $B$-module of degree $r_j$, for $1\leq j\leq t$ and let $M_j$ be a $G$-module for $1\leq j\leq t+1$. Then we have
$$R^i\ind_B^G(M_1\otimes V_1\otimes \cdots\otimes M_t\otimes V_t\otimes M_{t+1})=0$$
for $i>r_1+\cdots+r_t$.

\end{corollary}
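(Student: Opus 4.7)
The plan is to reduce to the case in which every $V_j$ is one-dimensional, and then to commute the $V_j$ past the $G$-modules using Proposition~7.1~(ii), pull the $G$-factor out using the tensor identity, and apply Lemma~6.2~(ii) to conclude.

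First I would reduce to finite-dimensional $V_j$ by local finiteness: each polynomial $B$-module is a direct limit of its finite-dimensional polynomial submodules, tensor product over $\K$ commutes with direct limits in each variable, and $R^i\ind_B^G$ commutes with direct limits. I then argue by induction on $N=\dim V_1+\cdots+\dim V_t$. If some $V_j$ has dimension at least $2$, pick a one-dimensional $B$-submodule $L\subseteq V_j$; since a submodule or quotient of a polynomial $B$-module of degree $r_j$ is again polynomial of degree $r_j$, both $L$ and $V_j/L$ are polynomial of degree $r_j$. Tensoring the short exact sequence $0\to L\to V_j\to V_j/L\to 0$ on both sides with the remaining factors yields a short exact sequence of $B$-modules whose outer terms have vanishing $R^i\ind_B^G$ for $i>r_1+\cdots+r_t$ by the inductive hypothesis; the long exact sequence then supplies the required vanishing on the middle term.

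In the base case, with every $V_j$ one-dimensional, iterated application of Proposition~7.1~(ii) produces a $B$-module isomorphism
\begin{equation*}
M_1\otimes V_1\otimes\cdots\otimes V_t\otimes M_{t+1}\cong (V_1\otimes\cdots\otimes V_t)\otimes(M_1\otimes\cdots\otimes M_{t+1}).
\end{equation*}
Writing $V=V_1\otimes\cdots\otimes V_t$ and $M=M_1\otimes\cdots\otimes M_{t+1}$, the factor $V$ is polynomial of degree $r_1+\cdots+r_t$ (its coefficient space lies in $\cf(V_1)\cdots\cf(V_t)\subseteq \barA(n,r_1+\cdots+r_t)$), while $M$ is a $G$-module. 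The tensor identity (\cite{SD37}, Proposition~1.3~(iii)) gives $R^i\ind_B^G(V\otimes M)\cong R^i\ind_B^G(V)\otimes M$, which vanishes for $i>r_1+\cdots+r_t$ by Lemma~6.2~(ii).

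The main obstacle I anticipate is entirely conceptual rather than computational: one must ensure that every rearrangement of tensor factors in the base case is carried out at the level of genuine $B$-module isomorphisms, not merely $\K$-linear ones, so that $R^i\ind_B^G$ transfers correctly. This is exactly the content of Proposition~7.1~(ii), which is precisely what allows one to circumvent the non-commutativity of tensor products in the quantum setting via one-dimensional twists.
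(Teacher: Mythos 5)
Your proof follows the same route as the paper's: reduce by exact sequences to the case where each $V_j$ is one-dimensional, then use Proposition~7.1~(ii) to slide the one-dimensional factors past the $G$-modules, and finish with the tensor identity and Lemma~6.2~(ii). The paper compresses the reduction step into the single phrase ``by the long exact sequence we may assume that each $V_i$ is one dimensional,'' whereas you unpack it (local finiteness to pass to finite-dimensional $V_j$, then induction on total dimension via a one-dimensional submodule); this is exactly what the paper's phrase is shorthand for, and your justification that submodules and quotients of a degree-$r_j$ polynomial $B$-module are again polynomial of degree $r_j$ is the point that makes the inductive hypothesis applicable. The only cosmetic difference is that the paper rearranges to $M\otimes V$ and you rearrange to $V\otimes M$; since $V$ is one-dimensional, Proposition~7.1~(ii) makes these interchangeable, and the tensor identity applies to either ordering. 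In short: correct, and essentially the paper's argument with the implicit steps made explicit.
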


\begin{proof} By the long exact sequence we may assume that each $V_i$ is one
	dimensional. Then by the Proposition $M_1\otimes V_1\otimes
	\cdots\otimes M_t\otimes V_t\otimes M_{t+1}$ is isomorphic to $M\otimes
	V$, where $M=M_1\otimes \cdots \otimes M_{t+1}$ and $V=V_1\otimes
	\cdots\otimes  V_t$ and so the result follows from the tensor identity and 
Lemma~6.2(ii).

\end{proof}

\begin{remark} Recall (or check, by dimension shifting)  that if $m\geq 0$ and 
$$0\to X_r\to X_{r-1}\to \cdots \to X_0\to M\to 0$$
 is an exact sequence of $B$-modules such that $R^i\ind_B^GX_j=0$ for all $i>m+j$ then $R^i\ind_B^G M=0$ for all $i>m$.  In particular if $R^i\ind_B^G X_j=0$ for all $i>j$ then $R^i\ind_B^G M=0$ for all $i>0$.
 \end{remark}

\begin{proposition}
\label{indovAisA}
We have
$$R^i\ind_B^G \barA(n)=\begin{cases} A(n), & \hbox{\rm if } i=0;\cr
0, & \hbox{\rm if } i>0.
\end{cases}$$

\end{proposition}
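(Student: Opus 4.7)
The strategy is to construct a bounded $B$-equivariant resolution $X_\bullet\to\barA(n)$ in which the term $X_d$ in homological degree $d$ is acyclic for $R^i\ind_B^G$ in the range $i>d$; Remark~7.3 will then force $R^i\ind_B^G\barA(n)=0$ for $i>0$. The resolution is a multi-variable Koszul complex, built from the tensor decomposition $\barA(n)=A_1\otimes\cdots\otimes A_n$ of Section~3.

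\textbf{Identifying each factor.} Since same-row generators commute, each $A_m=\K[\barc_{m1},\ldots,\barc_{mm}]$ is a commutative polynomial ring in $m$ variables. The comodule formula $\tau(\barc_{mj})=\sum_{j\le r\le m}\barc_{mr}\otimes\barc_{rj}$ identifies the linear span $V_m$ of the generators, via $\barc_{mj}\leftrightarrow e_j$, with the quotient $E/F_m$ of the natural module, where $F_m=\mathrm{span}(e_{m+1},\ldots,e_n)$ is a $B$-submodule of $E|_B$. Consequently $A_m\cong S(E)/(F_m)$ as $B$-algebras, and in particular $A_n\cong S(E)|_B$ is the restriction of a $G$-module.

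\textbf{Koszul resolution and tensor product.} Since $S(E)$ is the classical commutative polynomial algebra in $n$ variables and $e_{m+1},\ldots,e_n$ is a regular sequence in it, the Koszul complex
\begin{equation*}
0\to S(E)\otimes\Lambda^{n-m}F_m\to\cdots\to S(E)\otimes F_m\to S(E)\to A_m\to 0
\end{equation*}
is a $B$-equivariant resolution, where $\Lambda^pF_m$ denotes the classical antisymmetric $p$-th exterior power of the $B$-module $F_m$; this is polynomial of degree $p$ by Corollary~5.3, since its weights lie in $\Lambda(n,p)$. Tensoring these $n$ resolutions over $\K$ (exact, as we tensor over a field) yields a bounded exact complex $X_\bullet\to\barA(n)$ with
\begin{equation*}
X_d=\bigoplus_{p_1+\cdots+p_n=d}S(E)\otimes\Lambda^{p_1}F_1\otimes S(E)\otimes\Lambda^{p_2}F_2\otimes\cdots\otimes S(E)\otimes\Lambda^{p_n}F_n.
\end{equation*}
Each summand has precisely the alternating shape required by Corollary~7.2, with $M_j=S(E)$ a $G$-module and $V_j=\Lambda^{p_j}F_j$ polynomial of degree $p_j$; hence $R^i\ind_B^GX_d=0$ for $i>\sum p_j=d$, and Remark~7.3 then yields $R^i\ind_B^G\barA(n)=0$ for all $i>0$.

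\textbf{The $R^0$ computation and main obstacle.} For $i=0$, Remark~4.3 gives that $\ind_B^G\barA(n)$ is polynomial, and the embedding $\barA(n)\hookrightarrow\K[B]$ with left exactness produces $\ind_B^G\barA(n)\hookrightarrow\ind_B^G\K[B]=\K[G]$ (tensor identity); the image, being polynomial, lies in the subcoalgebra $A(n)$. Conversely, $A(n)$ is a subcoalgebra of $\K[G]$ and $\pi\colon\K[G]\to\K[B]$ carries $A(n)$ into $\barA(n)$, so $(\pi\otimes 1)\delta_G$ maps $A(n)$ into $\barA(n)\otimes\K[G]$, placing $A(n)$ inside $\ind_B^G\barA(n)$ under the isomorphism $\K[G]\cong\ind_B^G\K[B]$. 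The one genuinely delicate point in the whole argument is the $B$-equivariance of the Koszul differential: because $S(E)$ in the paper is the classical commutative symmetric algebra, one is forced to work with the classical antisymmetric $\Lambda^pF_m$ rather than the $q$-deformed $\bigwedge^p$ used elsewhere, and must verify that both the multiplication $S(E)\otimes E\to S(E)$ and the classical antisymmetrisation on $F_m^{\otimes p}$ are $B$-equivariant — both follow immediately from $F_m\subseteq E$ being a $B$-submodule and $S(E)$ being a $B$-algebra.
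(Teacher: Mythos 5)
Your $i=0$ argument reproduces the paper's, and the overall Koszul strategy for $i>0$ is also the paper's, but the substitution of the classical exterior power for the quantum one is a genuine mistake that breaks the argument for $q\neq 1$.

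The assertion that ``one is forced to work with the classical antisymmetric $\Lambda^p F_m$ rather than the $q$-deformed $\bigwedge^p$'' is backwards. For the Dipper--Donkin quantisation the classical commutative $S(E)$ and the \emph{quantum} exterior algebra $\tbw(E)$ (relations $e_i^2=0$, $e_ke_l+qe_le_k=0$ for $k<l$) are the compatible pair: both are $G$-module quotients of $T(E)$, and the Koszul differential is the $G$-module map $\psi\colon\tbw^r E\to E\otimes\tbw^{r-1}E$ of \cite{SD37}, Lemma 3.3(ii), \emph{not} the classical antisymmetriser. Your final sentence — that classical antisymmetrisation on $F_m^{\otimes p}$ is $B$-equivariant because $F_m\subseteq E$ is a $B$-submodule — is the precise false step: permuting tensor factors is never a morphism of comodules over a non-cocommutative coalgebra, regardless of the factors, so the classical antisymmetriser is not a $B$-map and the ideal of $T(F_m)$ generated by $v^2$, $v\in F_m$, is not $B$-stable when $q\neq 1$. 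Consequently $\Lambda^p F_m$ has no $B$-module structure induced from $F_m^{\otimes p}$, Corollary~5.3 does not apply to it, and the claimed $B$-equivariant Koszul complex does not exist. The fix is exactly what the paper does: take $\tbw^p V_m$ to be the image of $V_m^{\otimes p}$ in $\tbw^p E$ — which \emph{is} a $B$-submodule, being the image of a $B$-submodule under the $G$-module map $E^{\otimes p}\to\tbw^p E$ — and restrict the quantum Koszul resolution of $S^a(E/V_m)$. With that replacement, your degree count and the appeal to Corollary~7.2 and Remark~7.3 go through as you describe.
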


\begin{proof}  We first consider the case $i=0$. The natural map $\pi\colon  A(n)\to \barA(n)$  gives rise to a $G$-module map $\tilde\pi\colon A(n)\to \ind_B^G \barA(n)$, given by 
$$\tilde\pi(f)=\sum_{i=1}^m \pi(f_i)\otimes f_i'$$
where $\de_{\K[G]}(f)=\sum_{i=1}^m f_i\otimes f_i'\in  A(n)\otimes \K[G]$. Now
if $\tilde\pi(f)=0$ then applying $\ep_{\K [B]}\otimes \id_{\K [G]}$ we get 
$$0=\sum_{i=1}^m \ep_{\K [B]}\pi(f_i)f_i'=\sum_{i=1}^m \ep_{\K [G]}(f_i)f_i'=f.$$
Hence $\tilde\pi$ is injective. Now the inclusion $\barA(n)$ of $\K [B]$ gives
rise to an injective $G$-module homomorphism $\ind_B^G \barA(n)\to \ind_B^G
\K[B]=\K [G]$. Moreover, $\ind_B^G \barA(n)$ is polynomial, by Remark 4.3,  so that this map goes onto $A(n)$. Hence we have a composition of injective $G$-module homomorphisms
$$A(n)\to \ind_B^G \barA(n)\to A(n).$$
But now, restricting to degree  $r$ we get an injective homomorphism $A(n,r)\to A(n,r)$ and since $A(n,r)$ is finite dimensional this map is surjective, for all $r\geq 0$. Hence the composite $A(n)\to \ind_B^G \barA(n)\to A(n)$ is surjective and the second map $\ind_B^G\barA(n)\to A(n)$ is a $G$-module isomorphism.

\q  We now suppose $i>0$.  Let $A_r$ be the subaglebra of $\barA(n)$ generated by the elements $\barc_{r1},\ldots,\barc_{rr}$. Then $A_r$ is a $B$-submodule and  the multiplication map $A_1\otimes \cdots \otimes A_n\to \barA(n)$ is a $B$-module isomorphism. 

\q Let  $0\leq m<  n$ and  let $V_m$ be the $B$-submodule of $E$ spanned by $e_j$, with $m<j\leq n$.  Let  $V=V_m$.  For $r\geq 0$ we write $\tbw^r V$ for the subspace of $\tbw^r E$ spanned by $\hate_i$, with $i\in I(n,r)$, $i_a>m$ for $1\leq a\leq r$. Since this is the image of the $B$-submodule $ V^{\otimes r}$ of $E^{\otimes r}$ under the natural map $E^{\otimes r}\to \tbw^r E$ we have that $\tbw^r V$ is a $B$-submodule of $E^{\otimes r}$. Similarly we have that the ideal $J$, say,  of $S(E)$ generated by $V$ is a $B$-submodule. We write $S(E/V)$ for the $\K$-algebra and $B$-module $S(E)/J$.  We write $S^r(E/V)$ for the $r$th homogeneous component of $S(E/V)$.
 
 Let $U_m$ be the $B$-submodule of $A(n)$ spanned by $\barc_{m1},\ldots, \barc_{ mm}$. Now we have a $B$-module homomorphism $\theta\colon  E\to U_m$, sending $e_i$ to $\barc_{mi}$, for $1\leq i\leq m$, and to $0$ for $m<i\leq n$.  Then $\theta$ induces a $B$-module isomorphism 
$S(E/V_m)\to A_m$.  Hence we have
\begin{equation}
	\label{barAnr}
	\barA(n,r)\cong \bigoplus_{r=r_1+\cdots+r_n}
	S^{r_1}(E/V_1)\otimes\cdots\otimes  S^{r_n}(E/V_n).
\end{equation}

 By  \cite{SD37}, Lemma 3.3(ii),  we have that, for $r>0$, the $\K$-linear map $\psi\colon \tbw^rE\to E\otimes \tbw^{r-1}E$, given by 
$$\psi(\hate_i)=\sum_{a=1}^r (-1)^{a-1} e_{i_a}\otimes \hate_{i_1\ldots {\hat i_a}\ldots i_r}$$
for $\in I(n,r)$ with $i_1>\cdots>i_r$ 
(where ${\hat i_a}$ indicates that $i_a$ is omitted) is a $G$-module homomorphism. Combining these maps with the multiplication maps $E\otimes S^b(E)\to S^{b+1}(E)$, $b\geq 0$, (also $G$-module maps) in the usual way, for $a\geq 0$,  we obtain  the Koszul resolution
$$0\to \tbw^aE\to \cdots \to \tbw^2 E \otimes  S^{a-2}E\to E\otimes  S^{a-1}E \to S^r(E)\to 0.$$
By restricting the maps in the above we obtain, in the usual way, 
the Koszul resolution 
(cf.\cite[II~12.12 (i)]{RAG})
\begin{align*}0\to \tbw^a V_m&\to \cdots\to  \tbw^{a-j}V_m \otimes S^j V_m\cr
&\to \cdots \to S^a(E) \to S^a(E/V_m)\to 0.
\end{align*}
Tensoring all such together, for $1\leq m\leq n$,  we obtain a resolution 
$$\cdots \to Y_1\to Y_0\to S^{r_1}(E/V_1)\otimes\cdots\otimes  S^{r_n}(E/V_n)\to 0$$
where each term $Y_s$ is a direct sum of modules of the form   
$M_1\otimes Z_1\otimes \cdots \otimes M_t\otimes Z_t$ with each $M_i$ a $G$-module and each $Z_j$ polynomial of degree $d_j$, say, $d_1+\cdots+d_t=s$.  Now from Corollary~7.2, 
we have that $R^i\ind_B^G Y_s=0$ for $i>s$ and hence by Remark~7.3 we have
$R^i(S^{r_1}(E/V_1)\otimes\cdots\otimes  S^{r_n}(E/V_n))=0$ for all $i>0$. Hence
by~\eqref{barAnr}  above we have $R^i\ind_B^G \barA(n)=0$, for all $i>0$.

\end{proof}

\bs

\begin{theorem}
\label{commutativity}
Let $\phi\colon \barA(n)\to k[B]$ and $\psi\colon A(n)\to k[G]$ be the
inclusion maps.  Let $\pi\colon A(n)\to \barA(n)$ be the restriction map.  Then, for  $V\in \Comod(\barA(n))$,  we have  $R^i\ind_B^G( \phi_0 V)\cong \psi_0 R^i\pi^0 V$ for all $i\geq 0$.
\end{theorem}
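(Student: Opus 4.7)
The plan is first to establish the case $i=0$ as a natural isomorphism $\ind_B^G\phi_0 V\cong \psi_0\pi^0 V$, and then to bootstrap to higher $i$ by showing that $\phi_0$ sends injective $\barA(n)$-comodules to $\ind_B^G$-acyclic $B$-modules, with Proposition~\ref{indovAisA} as the essential acyclicity input.

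For the case $i=0$, write $\rho\colon\K[G]\to\K[B]$ for the restriction comorphism of the inclusion $B\hookrightarrow G$, so that $\ind_B^G=\rho^0$. The four coalgebra maps assemble into the commutative square $\rho\circ\psi=\phi\circ\pi$. Both $\ind_B^G\phi_0 V$ and $\psi_0\pi^0 V$ are naturally realised as subspaces of $V\otimes\K[G]$, the latter via $\id_V\otimes\psi\colon V\otimes A(n)\hookrightarrow V\otimes\K[G]$. Substituting the commutativity relation into the equalizer condition defining $\pi^0 V$ shows at once that $\psi_0\pi^0 V\subseteq\ind_B^G\phi_0 V$. For the opposite inclusion I will argue that $\ind_B^G\phi_0 V\subseteq V\otimes A(n)$: since $\phi_0 V$ is a polynomial $B$-module, a routine extension of Remark~4.3 (via a composition series and the left exactness of induction, together with Lemma~4.2) shows $\ind_B^G\phi_0 V$ is a polynomial $G$-module, so its coefficient space lies in $A(n)$; applying $\ep_{\K[G]}\otimes\id_{\K[G]}$ to the defining coaction equations then forces every element of $\ind_B^G\phi_0 V$ to lie in $V\otimes A(n)$. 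Since $\phi$ is injective, the $\rho$-equalizer condition, now restricted to $V\otimes A(n)$, reduces to the $\pi$-equalizer condition, giving the reverse inclusion.

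Next, the acyclicity on injectives. Proposition~\ref{indovAisA} tells us $\phi_0\barA(n)$ is $\ind_B^G$-acyclic. Every injective in $\Comod(\barA(n))$ is a direct summand of a cofree comodule $W\otimes\barA(n)$ for some vector space $W$, which as a $\barA(n)$-comodule is a direct sum of copies of $\barA(n)$. Since $\Comod(\barA(n))$ is locally finite, direct sums of injectives remain injective, and both $\ind_B^G$ and its derived functors $R^i\ind_B^G$ commute with direct sums of $B$-comodules; consequently $\phi_0$ carries every injective $\barA(n)$-comodule to an $\ind_B^G$-acyclic $B$-module.

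With these two ingredients in hand, pick an injective resolution $0\to V\to I^\bullet$ in $\Comod(\barA(n))$. Since $\phi_0$ is exact, $\phi_0 I^\bullet$ is an $\ind_B^G$-acyclic resolution of $\phi_0 V$, whence $R^i\ind_B^G(\phi_0 V)=H^i(\ind_B^G\phi_0 I^\bullet)$. The natural isomorphism of the first step, applied termwise, gives an isomorphism of complexes $\ind_B^G\phi_0 I^\bullet\cong\psi_0\pi^0 I^\bullet$; exactness of $\psi_0$ lets it commute with cohomology, so $H^i(\psi_0\pi^0 I^\bullet)=\psi_0 H^i(\pi^0 I^\bullet)=\psi_0 R^i\pi^0 V$, as required. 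The principal obstacle will be the reverse inclusion in the $i=0$ step, which depends on promoting Remark~4.3 beyond the one-dimensional setting and converting the polynomiality of $\ind_B^G\phi_0 V$ into the concrete containment inside $V\otimes A(n)$; once this is done, the derived identification is a formal consequence of Proposition~\ref{indovAisA} and the exactness of $\phi_0$ and $\psi_0$.
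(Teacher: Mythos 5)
Your argument is correct and follows essentially the same route as the paper: both proofs rest on (a) the identification $\ind_B^G\circ\phi_0=\psi_0\circ\pi^0$ (the $i=0$ case, deduced via Remark~4.3), (b) Proposition~\ref{indovAisA} to show $\phi_0$ carries injectives to $\ind_B^G$-acyclic objects, and (c) the exactness of $\phi_0$ and $\psi_0$. The only cosmetic difference is that the paper phrases step (c) as a degenerating Grothendieck spectral sequence while you compute directly via an acyclic resolution, and you unpack the $i=0$ inclusion more explicitly than the paper's terse appeal to Remark~4.3 (which, note, is already stated for arbitrary polynomial $B$-modules, so no extension is actually needed).
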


\begin{proof}   If  $M$ is  a polynomial $B$-module then $\ind_B^GM$ is a
polynomial $G$-module, by Remark 4.3. Hence we have $\ind_B^G\circ
\phi_0= \psi_0\circ \pi^0\colon \Comod(\barA(n))\to \Mod(G)$. We write
$F=\ind_B^G\circ \phi_0= \psi_0\circ \pi^0$.   Now $\psi_0$ is exact so
we have $RF^i V =\psi_0 R^i\pi^0 V$.  An  injective $\barA(n)$-comodule
is a direct summand of a direct sum of copies of the left regular
comodule $\barA(n)$. So it follows from the Proposition~\ref{indovAisA}
that $\phi_0$ takes injective objects to $\ind_B^G$-acyclic objects.
Hence we have a Grothendieck spectral sequence, with second page
$R^i\ind_B^G\circ R^j\phi_0 V$ converging to $R^*F V$. But $\phi_0$ is
exact, so the spectral sequence degenerates and we have $R^i F V\cong R^i\ind_B^G( \phi_0 V)=\psi_0 R^i\pi^0 V$.  

\end{proof}

\begin{remark} Slightly less formally, identifying  $\Comod(\barA(n))$ with the
full subcategory  of $B$-modules whose objects are the  polynomial
modules and identifying  $\Comod(A(n))$ with the subcategory of
$G$-modules whose objects are the polynomial modules, we have $R^i \pi^0 V \cong R^i\ind_B^G V$ for a polynomial $B$-module $V$.

\end{remark}

\q The Theorem~\ref{commutativity} has the following corollary, generalising Remark 4.3,   but  which may also be proved by a straightforward dimensional shifting argument.

\begin{corollary}  If $V$ is a polynomial $B$-module then $R^i\ind_B^G V$ is a polynomial $G$-module, for all $i\geq 0$.

\end{corollary}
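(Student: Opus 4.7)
The plan is to deduce this directly from Theorem~\ref{commutativity}. Given a polynomial $B$-module $V$, identify it with an object of $\Comod(\barA(n))$; then $\phi_0 V$ is that same $V$ regarded as a full $B$-module. Theorem~\ref{commutativity} provides, for every $i\ge 0$, the isomorphism
$$R^i\ind_B^G V \;=\; R^i\ind_B^G(\phi_0 V)\;\cong\;\psi_0\, R^i\pi^0 V.$$
The right-hand side is an $A(n)$-comodule restricted along the inclusion $\psi\colon A(n)\hookrightarrow \K[G]$, so its coefficient space lies in $A(n)$. This is precisely the defining property of a polynomial $G$-module, and so we are done.

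For the alternative dimension-shifting proof mentioned in the Remark, one argues by induction on $i$. The base case $i=0$ is Remark~4.3. For the inductive step, embed $V$ into an injective polynomial $B$-module $Q$; such a $Q$ may be taken to be a direct summand of a direct sum of copies of the right regular comodule $\barA(n)$. Using Proposition~\ref{indovAisA} together with the fact that $\ind_B^G$ and its derived functors commute with direct limits, one obtains $R^i\ind_B^G Q=0$ for $i>0$. The long exact sequence for $0\to V\to Q\to V'\to 0$ then yields $R^i\ind_B^G V\cong R^{i-1}\ind_B^G V'$, and $V'$ is again polynomial (as a quotient of polynomial modules), closing the induction.

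The main potential obstacle is purely a matter of bookkeeping: one must keep track of the two parallel identifications, between $\Comod(\barA(n))$ and the full subcategory of polynomial $B$-modules on the source side, and between $\Comod(A(n))$ and the full subcategory of polynomial $G$-modules on the target side, so that the isomorphism furnished by Theorem~\ref{commutativity} really produces a statement about $R^i\ind_B^G$ computed in $\Mod(G)$. Once this is set up, the first route is essentially immediate, and the second route requires only the standard fact that derived induction commutes with direct sums, which is already implicit in the preceding sections.
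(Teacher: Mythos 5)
Your proposal is correct and matches the paper's intended argument; indeed the paper merely remarks that the corollary ``follows from Theorem~7.5 ... but may also be proved by a straightforward dimensional shifting argument'' without writing either out, and your first route makes the deduction from Theorem~7.5 explicit in exactly the expected way. One small slip in the alternative dimension-shifting route: from $0\to V\to Q\to V'\to 0$ the long exact sequence gives $R^i\ind_B^G V\cong R^{i-1}\ind_B^G V'$ only for $i\geq 2$; for $i=1$ you get an epimorphism $\ind_B^G V'\twoheadrightarrow R^1\ind_B^G V$ rather than an isomorphism, but since a quotient of a polynomial module is polynomial the induction goes through anyway.
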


\q However, the main point of the discussion is to demonstrate the following result, which follows from Kempf's Vanishing Theorem for $G$, as in \cite{SD37},  Theorem 3.4.

\begin{corollary} (Kempf Vanishing  for polynomial modules.)  Let $\pi\colon A(n)\to \barA(n)$ be the restriction map.  For  $\lambda\in \Lambda^+(n)$ we have 
$$R^i\pi^0 \K_\lambda=
\begin{cases}
\nabla(\lambda), & \hbox{ if } i=0;\cr
0, & \hbox { if } i>0.
\end{cases}$$
\end{corollary}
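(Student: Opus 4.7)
The plan is to deduce this corollary directly from Theorem~\ref{commutativity} together with the known Kempf Vanishing Theorem for the quantum group $G(n)$ (cited in the statement).

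First I would observe that for $\lambda \in \Lambda^+(n) \subseteq \Lambda(n)$, the one-dimensional $B$-module $\K_\lambda$ is polynomial, so it can be identified with an object of $\Comod(\barA(n))$. This is exactly the setup in which Theorem~\ref{commutativity} applies: specialising the isomorphism $R^i\ind_B^G(\phi_0 V) \cong \psi_0 R^i\pi^0 V$ to $V = \K_\lambda$ gives
\begin{equation*}
R^i\ind_B^G \K_\lambda \;\cong\; \psi_0\, R^i\pi^0 \K_\lambda
\end{equation*}
for every $i \geq 0$, where on the left we regard $\K_\lambda$ as a $B$-module via $\phi_0$.

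Next I would invoke Kempf's Vanishing Theorem for $G(n)$, in the form used earlier in Section~6 (see \cite{SD37}, Theorem~3.4, applied for instance at the base step of the induction in Lemma~6.2). Since $\lambda \in X^+(n)$, this tells us that $R^i\ind_B^G \K_\lambda = 0$ for all $i>0$, while for $i=0$ we have $\ind_B^G \K_\lambda = \nabla(\lambda)$ by the definition of $\nabla(\lambda)$ given earlier in the paper.

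Finally, since $\psi_0\colon \Comod(A(n)) \to \Mod(G)$ is the fully faithful embedding of the category of polynomial $G$-modules into the category of all $G$-modules, the isomorphism above transfers the vanishing (and the identification with $\nabla(\lambda)$) from the ambient $G$-module category back to the polynomial category, yielding the stated formula for $R^i\pi^0 \K_\lambda$. There is no real obstacle: all the substantive work has already been carried out in Proposition~\ref{indovAisA} and Theorem~\ref{commutativity}, and this corollary is simply the specialisation of that commutativity to a one-dimensional polynomial weight module where Kempf vanishing is available on the quantum group side.
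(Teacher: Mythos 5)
Your proposal is correct and follows exactly the paper's intended argument: the paper simply remarks that the corollary ``follows from Kempf's Vanishing Theorem for $G$, as in \cite{SD37}, Theorem 3.4'' applied via Theorem~\ref{commutativity}, and you have spelled out precisely that deduction, including the routine but necessary observation that $\psi_0$ reflects vanishing because it is a faithful exact embedding.
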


\q Let $\pi(r)\colon A(n,r)\to \barA(n,r)$ be the restriction of $\pi$.  Now
$\pi=\oplus_{r=0}^\infty  \pi(r)\colon A(n)=\oplus_{r=0}^\infty A(n,r)\to \barA(n,r)$. If $V\in \Comod(\barA(n))$ then we may write $V$ uniquely as $V=\bigoplus_{r=0}^\infty V(r)$, where $V(r)\in \Comod(\barA(n,r))$ (or less formally, $V(r)$ is polynomial of degree $r$). It follows that 
$R^i\pi^0 V=\bigoplus_{r=0}^{\infty} R^i\pi(r)^0 V(r)$. Hence we get:

\begin{corollary} (Kempf  Vanishing  for homogeneous polynomial modules.)  Let
	$r\geq 0$ and let  $\pi(n,r)\colon A(n,r)\to \barA(n,r)$ be the restriction map.  For  $\lambda\in \Lambda^+(n,r)$ we have 
$$R^i\pi(n,r)^0 \K_\lambda=
\begin{cases}
\nabla(\lambda), & \hbox{ if } i=0;\cr
0, & \hbox { if } i>0.
\end{cases}$$
\end{corollary}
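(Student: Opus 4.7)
The plan is to deduce this homogeneous version directly from the preceding corollary (Kempf Vanishing for polynomial modules) together with the splitting remark made in the paragraph immediately preceding the statement. The point is that everything in sight decomposes by degree.

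First I would observe that $\K_\lambda$, for $\lambda\in \Lambda^+(n,r)$, is a polynomial $B$-module of degree $r$, that is, $\K_\lambda\in \Comod(\barA(n,r))$; this is immediate from the definition of the comodule structure map together with the fact that $\deg(\lambda)=r$. Hence, viewing $\K_\lambda$ as an element of $\Comod(\barA(n))$ via the natural inclusion $\barA(n,r)\hookrightarrow \barA(n)$, its homogeneous component of degree $r$ is $\K_\lambda$ itself and all other components vanish.

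Next I would apply the identity
\begin{equation*}
  R^i\pi^0 V = \bigoplus_{s\geq 0} R^i\pi(n,s)^0 V(s),
\end{equation*}
recorded just before the corollary statement, with $V=\K_\lambda$. All summands with $s\neq r$ are zero, so $R^i\pi^0 \K_\lambda \cong R^i\pi(n,r)^0 \K_\lambda$ as $A(n)$-comodules (and the right-hand side sits in $\Comod(A(n,r))$). On the other hand, the previous corollary (Kempf Vanishing for polynomial modules) gives $R^i\pi^0 \K_\lambda=\nabla(\lambda)$ for $i=0$ and $0$ for $i>0$, and $\nabla(\lambda)$ is polynomial of degree $r$ since $\lambda\in \Lambda^+(n,r)$. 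Combining these two identifications yields the claim.

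There is no genuine obstacle here; the argument is essentially a bookkeeping exercise in homogeneous components. The only mildly delicate point is to confirm that the direct-sum decomposition $R^i\pi^0 V = \bigoplus_s R^i\pi(n,s)^0 V(s)$ really is compatible with the $A(n)$-comodule structure and not merely a linear isomorphism, but this follows from the fact that the coalgebra map $\pi$ is graded and injective resolutions in $\Comod(\barA(n))$ can be chosen in a degree-preserving fashion (for instance, by resolving each homogeneous component $V(s)$ separately in $\Comod(\barA(n,s))$ and taking the direct sum).
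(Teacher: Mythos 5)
Your argument is correct and is essentially identical to the paper's: the paper's proof is precisely the observation, recorded in the paragraph preceding the statement, that $\pi$, any $\barA(n)$-comodule, and hence $R^i\pi^0$ all decompose by degree, combined with the preceding Corollary (Kempf Vanishing for polynomial modules). Your extra remark about compatibility of the grading with the comodule structure is a sound sanity check but is already implicit in the paper's reduction.
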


Let $S(n,r)=A(n,r)^*$ and $S^-(n,r)=\barA(n,r)^*$. Then  from Proposition~2.1   we get :

\begin{corollary} (Kempf Vanishing for Schur algebras)  For $\lambda\in \Lambda^+(n,r)$ we have
$$\Tor_i^{S^-(n,r)}(\K_\lambda^*,S(n,r))=
\begin{cases} \nabla(\lambda)^*, & \hbox{ if } i=0;\cr
0,  &\hbox{ if } i>0.

\end{cases}.$$

\end{corollary}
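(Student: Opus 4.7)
The plan is to apply Proposition~2.1 directly to the coalgebra morphism $\pi(n,r)\colon A(n,r)\to \barA(n,r)$ and combine this with the preceding corollary computing $R^i\pi(n,r)^0\K_\lambda$.

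First I would observe that both $A(n,r)$ and $\barA(n,r)$ are finite dimensional $\K$-coalgebras and that their linear duals are, by the very definition given just before the statement, the quantised Schur algebra $S(n,r)=A(n,r)^*$ and the negative Borel-Schur algebra $S^-(n,r)=\barA(n,r)^*$. Hence Proposition~2.1, applied to $\phi=\pi(n,r)$ and $V\in\comod(\barA(n,r))$, yields the natural identification
\begin{equation*}
R^i\pi(n,r)^0 V \;\cong\; \bigl(\Tor_i^{S^-(n,r)}(V^*,S(n,r))\bigr)^{*}
\end{equation*}
for all $i\geq 0$.

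Next I would specialise to $V=\K_\lambda$ with $\lambda\in\Lambda^+(n,r)$. The previous corollary (Kempf Vanishing for homogeneous polynomial modules) computes the left-hand side as $\nabla(\lambda)$ when $i=0$ and as $0$ when $i>0$. Substituting gives
\begin{equation*}
\bigl(\Tor_i^{S^-(n,r)}(\K_\lambda^*,S(n,r))\bigr)^{*} \;=\;
\begin{cases}\nabla(\lambda),& i=0;\\ 0,& i>0.\end{cases}
\end{equation*}

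Finally I would dualise this identification. Since $\K_\lambda$ is one-dimensional and $S^-(n,r)$, $S(n,r)$ are finite dimensional, each $\Tor_i^{S^-(n,r)}(\K_\lambda^*,S(n,r))$ is a finite-dimensional vector space; moreover $\nabla(\lambda)$ is finite dimensional as it is polynomial of degree $r$. Applying the duality $(-)^{**}\cong\id$ on finite-dimensional spaces then yields the stated formula. There is no real obstacle here: once Proposition~2.1 and the Kempf Vanishing statement for homogeneous polynomial modules are in place, the result is a one-step translation through linear duality.
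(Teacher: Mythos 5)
Your proposal is correct and follows exactly the paper's route: apply Proposition~2.1 to $\phi=\pi(n,r)$ to get $R^i\pi(n,r)^0\K_\lambda\cong(\Tor_i^{S^-(n,r)}(\K_\lambda^*,S(n,r)))^*$, substitute the Kempf vanishing computation for homogeneous polynomial modules, and dualise using finite dimensionality. No comparison is needed beyond noting agreement.
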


(Here $\K_\lambda^*$ denotes the right $S^-(n,r)$-dual module of $\K_\lambda$.)

\section{General coefficient rings}

\q We shall work with Schur algebras over  general coefficient rings.   We will
use the universal coefficient ring $\Z=\zz\left[ t,t^{-1} \right]$.  First we
consider the Schur algebra $S_{\rat(t),t}(n,r)$ over the field of rational
functions in the parameter $t$. We define $S_{\Z,t}(n,r)$ to be 
$$\{\xi\in S_{\rat(t),t}(n,r)\vert \xi(f)\in \Z \hbox{ for all } f\in A_{\Z,t}(n,r)\}$$
which, by Lemma~3.1, is a  $\Z$-form of $S_{\rat(t),t}(n,r)$.
 For an arbitrary commutative ring and a unit $q$ in $R$ we define, by base change via the ring homomorphism  from $\Z$ to $R$, taking $t$ to $q$, the $R$-algebra
$$S_{R,q}(n,r)=R\otimes_{\Z}  S_{\Z,t}(n,r).$$

It is easy to check that, for $R$ a field and $q$ a unit in $R$ this is
consistant with our earlier  definition, i.e., that the homomorphism $\Z\to R$, taking $t$ to $q$ induces an isomorphism $A_{R,q}(n,r)^*\to R\otimes_{\Z} S_{\Z,t}(n,r)$.

\q In the same way we define the negative (quantised) Borel-Schur subalgebra $S^-_{R,q}(n,r)$ of $S_{R,q}(n,r)$. We define $S^-_{\que(t),t}(n,r)=\barA_{\que(t),t}(n,r)^*$. The coalgebra $\overline{A}_{\que(t),t}(n,r)$ has a $\Z$-form $\overline{A}_{\Z}(n,r)$ spanned as a $\Z$-module by the elements $\barc_{ij,\que(t),t}$,  with $i,j\in I(n,r)$. We define 
$S^-_{\Z,t}(n,r)$ to be
$$\{\xi\in S^-_{\rat(t),t}(n,r)\vert \xi(f)\in \Z \hbox{ for all } f\in \overline{A}_{\Z,t}(n,r)\}$$
which, by Lemma~3.3 is a $\Z$-form of   $S^-_{\que(t),t}(n,r)$.  For an
arbitrary commutative ring and a unit $q$ in $R$ we define, by base change via
the ring homomorphism from $\Z$ to $R$ taking $t$ to $q$, the $R$-algebra
$$S^-_{R,q}(n,r)=R\otimes_{\Z}  S^-_{\Z,t}(n,r).$$
It is easy to check that, for $R$ a field and $q$ a unit in $R$ this is
consistent with our earlier  definition, i.e., that the homomorphism $\Z\to R$, taking $t$ to $q$ induces an isomorphism $\overline{A}_{R,q}(n,r)^*\to R\otimes_{\Z} S^-_{\Z,t}(n,r)$.

\q The positive Borel-Schur algebra $S^+_{R,q}(n,r)$ is defined in an analogous
way. We define $A^+_{\que(t),t}(n)=A_{\que(t),t}(n)/I$, where $I$ is the ideal
of $A_{\que(t),t}(n)$ generated by the elements  $c_{ij}$ with $1\le j<i\le n$.
Then $A^+_{\que(t),t}(n)$ has a natural coalgebra grading
\begin{equation*}
	A^+_{\que(t),t}(n)=\bigoplus_{r\geq 0} A^+_{\que(t),t}(n,r).
\end{equation*}
	For every nonnegative $r$ we
define $S^+_{\que(t),t}(n,r)$ to be the $\que(t)$-algebra dual of
$A^+_{\que(t),t}(n,r)$.  We write $A^+_{\Z,t}(n,r)$ for the image of $A_{\Z,t}(n,r)$ under the natural map 
 $A_{\que(t),t}(n,r)\to A^+_{\que(t),t}(n,r)$. Then $A^+_{\Z,t}(n,r)$  is a
 $\Z$-form of $A^+_{\que(t),t}(n,r)$ and we define $S^+_{\Z,t}(n,r)$ to be 
$$\{\xi\in S^+_{\rat(t),t}(n,r)\vert \xi(f)\in\Z  \hbox{ for all } f\in A^+_{\Z,t}(n,r)\}.$$
For an arbitrary commutative ring and a unit $q$ in $R$ we define, by base
change via the ring homomorphism from $\Z$ to $R$ taking $t$ to $q$, the $R$-algebra
$$S^+_{R,q}(n,r)=R\otimes_{\Z}  S^+_{\Z,t}(n,r).$$

We identify $S^-_{R,q}(n,r)$ and $S^+_{R,q}(n,r)$ with $R$-subalgebras of $S_{R,q}(n,r)$ in the obvious way.

\bigskip
\bs

\q We now generalise Corollary~7.10 to   an arbitrary commutative  ground ring from  a general result.   This is presumably well known but we include it here since we were unable to find a suitable reference. For an algebra $S$ over a commutative ring $R$ and maximal ideal $M$ of $R$ with residue field $\K=R/M$ we write $S_\K$ for the $\K$-algebra $\K\otimes_R S$ obtained by base change.  Further, if $D$ is a left (resp. right) $S$-module we write $D_\K$ for the left (\resp right) $S_\K$-module $\K\otimes_R D$ obtained by base change.

\begin{proposition} Let $R$ be a commutative Noetherian ring. Let $S$ be an $R$-algebra which is finitely generated and projective as an  $R$-module. Let $D$ be a right $S$-module and $E$ a left $S$-module. Suppose that $D$ and $E$ are finitely generated and projective as $R$-modules. Suppose further that for each  maximal ideal $M$ of $R$ we have
$$\Tor_i^{S_\K}(D_\K, E_\K)=0$$
for all $i>0$ (where $\K=R/M$). Then  we have
$$\Tor_i^S(D,E)=0$$
 for all $i>0$.

\end{proposition}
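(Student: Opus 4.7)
The plan is to combine the change-of-rings identity $\Tor_i^S(D, E_\K)\cong \Tor_i^{S_\K}(D_\K, E_\K)$ with an Artin--Rees / Krull-intersection argument.  Localising at each maximal ideal we may assume $R$ is Noetherian local with maximal ideal $\mathfrak{m}$ and residue field $\K$, since the hypotheses and $\Tor^S$ both commute with localisation.

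Choose a projective resolution $P_\bullet \to D$ over $S$ by finitely generated projective modules; each $P_j$ is a summand of $S^{n_j}$ and is therefore $R$-projective.  Since $D$ is $R$-flat, $P_\bullet$ is simultaneously an $R$-flat resolution of $D$, so $\Tor_q^S(D, S_\K) = \Tor_q^R(D, \K) = 0$ for $q>0$.  The change-of-rings spectral sequence
$$\Tor_p^{S_\K}(\Tor_q^S(D, S_\K), E_\K) \Longrightarrow \Tor_{p+q}^S(D, E_\K)$$
therefore degenerates and gives $\Tor_i^S(D, E_\K) \cong \Tor_i^{S_\K}(D_\K, E_\K) = 0$ for $i>0$.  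Using the finite filtration of $E/\mathfrak{m}^k E$ whose successive quotients are $\mathfrak{m}^j E / \mathfrak{m}^{j+1} E \cong E_\K^{n_j}$ (by $R$-flatness of $E$ and finite-dimensionality of $\mathfrak{m}^j/\mathfrak{m}^{j+1}$ over $\K$), the long exact sequence of $\Tor^S$ then yields inductively $\Tor_i^S(D, E/\mathfrak{m}^k E) = 0$ for $i>0$ and every $k\geq 1$, and the long exact sequence of $0 \to \mathfrak{m}^k E \to E \to E/\mathfrak{m}^k E \to 0$ provides isomorphisms $\Tor_i^S(D, \mathfrak{m}^k E) \xrightarrow{\sim} \Tor_i^S(D, E)$ for $i \geq 1$.

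Set $C_\bullet = P_\bullet \otimes_S E$, a complex of finitely generated $R$-modules.  Since each $P_j$ is $S$-flat we have $P_\bullet \otimes_S \mathfrak{m}^k E = \mathfrak{m}^k C_\bullet$ as a subcomplex of $C_\bullet$, so the last isomorphism translates to: every class $[z] \in \Tor_i^S(D, E) = H_i(C_\bullet)$ is represented by some cycle $w \in Z_i(C_\bullet) \cap \mathfrak{m}^k C_i$ for every $k$.  Applying the Artin--Rees lemma to the finitely generated submodule $Z_i(C_\bullet) \subseteq C_i$ produces $k_0$ such that $Z_i(C_\bullet) \cap \mathfrak{m}^k C_i \subseteq \mathfrak{m}^{k - k_0} Z_i(C_\bullet)$ for $k \geq k_0$, whence $[z] \in \mathfrak{m}^{k - k_0} H_i(C_\bullet)$ for every $k \geq k_0$; Krull's intersection theorem applied to the finitely generated $R$-module $H_i(C_\bullet)$ then forces $[z] = 0$, completing the argument.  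The most delicate point to check is the chain-level identification $P_\bullet \otimes_S \mathfrak{m}^k E = \mathfrak{m}^k C_\bullet$, needed to pass from the module-level $\Tor$-isomorphism to representing classes by cycles inside $\mathfrak{m}^k C_i$, and this rests on the $S$-flatness of the individual $P_j$.
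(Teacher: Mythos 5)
Your proof is correct, and after the opening reduction to the local case the two arguments genuinely diverge. The first half of your argument is, up to rephrasing, the same as the paper's: the change-of-rings spectral sequence $\Tor_p^{S_\K}(\Tor_q^S(D,S_\K),E_\K)\Rightarrow\Tor_{p+q}^S(D,E_\K)$ is precisely the paper's first Grothendieck spectral sequence for $\F=\G\circ\H$ with $\H(X)=X_\K$ and $\G(Y)=Y\otimes_{S_\K}E_\K$, and both use the $R$-projectivity of $D$ (together with the $R$-projectivity of the terms of an $S$-projective resolution, itself coming from $S$ being $R$-projective) to degenerate it and obtain $\Tor_i^S(D,E_\K)=0$ for $i>0$. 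From there the routes are different. The paper first reduces to $i=1$ by dimension shifting, introduces a second Grothendieck spectral sequence (for $\F=\P\circ\Q$ with $\Q(X)=X\otimes_S E$ and $\P(Y)=\K\otimes_R Y$), extracts the five-term exact sequence, concludes $\Tor_1^R(\K,D\otimes_SE)=0$, invokes the local criterion for projectivity to get $\Tor_2^R(\K,D\otimes_SE)=0$, and finally kills $\Tor_1^S(D,E)$ by Nakayama. You instead keep all $i>0$ in play, bootstrap the vanishing of $\Tor_i^S(D,E_\K)$ through the $\mathfrak{m}$-adic filtration of $E$ to get $\Tor_i^S(D,\mathfrak{m}^kE)\xrightarrow{\sim}\Tor_i^S(D,E)$ for all $k$, and conclude by Artin--Rees and Krull intersection applied to the finitely generated complex $P_\bullet\otimes_SE$. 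The chain-level identification $P_\bullet\otimes_S\mathfrak{m}^kE=\mathfrak{m}^k(P_\bullet\otimes_SE)$ that you flag does hold (and actually only surjectivity onto $\mathfrak{m}^kC_\bullet$ is needed for your argument, so the $S$-flatness of the $P_j$ is not even essential at that spot). Your approach trades the paper's second spectral sequence and Nakayama-type lemma for the filtration/Artin--Rees/Krull machinery, and has the mild advantage of handling all homological degrees uniformly rather than reducing to $i=1$; both rely on the Noetherian hypothesis in an essential way, just through different classical theorems.
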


\begin{proof} We first  make a reduction to the case in which $R$ is local.  So we first  assume the result in the local case. Let $M$ be a maximal ideal of $R$ and $\K=R/M$. Then we have the $R_M$ algebra $S_M$ obtained by localising at $M$. The $R_M$ module $D_M$ (\resp $E_M$) obtained by localisation is naturally a left (\resp right) $S_M$-module. Also, for $i\geq 0$, we have the localisation $\Tor^S_i(D,E)_M$ of the $R$-module $\Tor^S_i(D,E)$. Moreover, by (the argument of) \cite{Mat}, (3.E), we have
	\begin{equation}
		\label{torsidem}
		\Tor^S_i(D,E)_M\cong \Tor_i^{S_M}(D_M,E_M) 
	\end{equation}
		and 
$$\Tor^{\K \otimes_R S_M}_i(\K \otimes _R D_M,\K \otimes E_M)\cong \Tor_i^{S_\K}(D_\K,E_\K)=0$$
for $i>  0$.   Thus, for $i>  0$, we get $\Tor_i^S(D,E)_M=0$ for all maximal ideals. Since $\Tor_i^S(D,E)$ is a finitely generated $R$-module, this implies $\Tor_i^S(D,E)=0$.

\q We now assume that $R$ is local with maximal ideal $M$ and $\K=R/M$. We make
a reduction to the case $i=1$. Suppose that $\Tor_1^S(D,E)$ is zero for all $D,E$ as above but that the result is  false. We choose $i>1$ as small as possible such that $\Tor_i^S(D,E)\neq 0$ for some $D,E$ as above. We choose an epimorphism from a finitely generated projective $S$-module $P$  onto $E$ and consider the corresponding  short exact sequence of $S$-modules
$$0\to N\to P\to E\to 0.$$
Then $N$ is finitely generated and projective as an $R$-module. Hence we have a short exact sequence of $S_\K$-modules 
$$0\to N_\K\to P_\K\to E_\K \to 0$$
with $P_\K$ projective. Hence we have 

$$\Tor_j^S(D,N)=\Tor^S_{j+1}(D,E) \hbox{\ \ and\ \ } \Tor_j^{S_\K}(D_\K,N_\K)=\Tor_{j+1}^{S_\K}(D_\K,E_\K)$$
for $j\geq 1$. So by the minimality of $i$ we have $\Tor_{i-1}^S(D,N)=0$ and therefore also $\Tor_i^S(D,E)=0$, a contradiction.

 \q Hence it suffices to prove that $\Tor^S_1(D,E)=0$ for all $D,E$ satisfying the hypotheses.  We now consider the right exact functor $\F$ from the category of finitely generated $S$-modules to $\K$-spaces, $\F(X)=X_\K\otimes_{S_\K} E_\K$.  Note that $\F$ factorizes: $\F$ is isomorphic to $\G\circ \H$, where $\H$ is a functor from $S$-modules to $S_\K$-modules, $\H(X)=X_\K$ and $G$ is a functor from the category of $S_\K$-modules to $\K$-spaces $\G(Y)=Y\otimes_{S_\K} E_\K$. Moreover, the functors $\G$ and $\H$ are right exact and $\H$ takes projective $S$-modules to projective $S_\K$-modules. Hence, for $X\in \mod(S)$,  there is a Grothendieck spectral sequence with second page $(L_i\G\circ L_j \H)X$ converging to $(L_*\F)X$. Taking $X=D$, since $D$ is projective as an $R$-module we have $(L_j \H)D=0$ for all $j>0$. Hence the spectral sequence degenerates and we have $(L_i\F)D=(L_i \G)(\H(D))$ for all $i\geq 0$. Hence we have $(L_i \F)D=\Tor^{S_\K}_i(D_\K,E_\K)$, and from the hypotheses, $(L_i\F)D=0$ for all $i>0$. 
 
 \q But also, for a right $S$-module $X$ we have $\F(X)=X_\K\otimes_{S_\K} E_\K = \K\otimes_R (X\otimes_S E)$. This gives another factorisation: $\F$ is the composite $\P\circ \Q$, where $\Q$ is the functor from right $S$-modules to $R$-modules, $\Q(X)=X\otimes_S E$ and $\P$ is the functor from $R$-modules to $\K$-spaces $\P(Y)=\K\otimes_R Y$. For $X$ projective, $X\otimes_S E$ is a projective $R$-module. Hence, for $X$ a right $S$-module,  there is a Grothendieck spectral sequence, with second page $(L_i\P\circ L_j\Q)X$ converging to $(L_*\F)X$. In particular (see \cite{Weibel}, Corollary 5.8.4), we have the $5$-term exact sequence  
 \begin{align*}
  (L_2\F)X\to (L_2 \P)(\Q(X)) &\to \P(L_1\Q(X))\\& \to (L_1\F)X\to (L_1\P)\Q(X)\to 0.
  \end{align*}
	 Taking $X=D$ we obtain the exact sequence
 \begin{align*}
 \Tor^{S_\K}_2(D_\K,E_\K)\to &\Tor_2^R(\K,D\otimes_S E)\to \K\otimes_R \Tor_1^S(D,E)\cr
 \to&\Tor^{S_\K}_1(D_\K,E_\K)\to \Tor_1^R(\K,D\otimes_S E)\to 0.
 \end{align*}
 But $\Tor_i^{S_\K}(D_\K,E_\K)=0$, for $i>0$, and so $\Tor_1^R(\K,D\otimes_S E)=0$. Hence $D\otimes_R E$ is a projective $R$-module, see \cite{Mat}, Section 18, Lemma 4. Hence $\Tor_2^R(\K,D\otimes_S E)=0$ and hence $\K\otimes_R \Tor_1^S(D,E)=0$, and hence
  $\Tor_1^S(D,E)=0$.

\end{proof}

\q Let $R$ be a commutative ring with Noetherian subring $R_0$. Let  $\phi\colon X\to
Y$ be an $R_0$-module homomorphism. It is easy to check (and we leave this to
the reader) that if for every subring  $R'$ of $R$ containing $R_0$ which is
finitely generated over $R_0$, the $R'$-module homomorphism $\phi_{R'}\colon
R'\otimes_{R_0} X\to R'\otimes_{R_0} Y$ is injective then the $R$-module
homomorphism $\phi_R\colon X_R\to Y_R$  (obtained by base change) is injective.

\begin{lemma} Let $R$ be a commutative ring and let $R_0$ be a Noetherian
	subring. Let $S$ be an $R_0$-algebra, finitely generated and projective
	as an $R_0$-module. Let $M$ be a right $S$-module and $N$ a left
	$S$-module and suppose that $M$ and $N$ are finitely generated and
	projective over $R_0$.    If $\Tor^{S_{R'}}_i(M_{R'},N_{R'})=0$ for all
	$i>0$, and all subrings $R'$ of $R$ containing $R_0$ and  finitely generated over $R_0$, then $\Tor^{S_R}_i(M_R,N_R)=0$ for all $i>0$.

\end{lemma}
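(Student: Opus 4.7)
\emph{Proof strategy.} The plan is to construct, once and for all, a resolution of $N$ by finitely generated free left $S$-modules $P_\bullet \to N \to 0$, and to check that its base change to any commutative $R_0$-algebra $R'$ remains a free resolution of $N_{R'}$ over $S_{R'}$; one then has the identification $\Tor^{S_{R'}}_i(M_{R'},N_{R'}) = H_i(R' \otimes_{R_0} (M \otimes_S P_\bullet))$ for every $R'$, and the hypothesis together with a filtered colimit argument gives the conclusion. First I would note that $S$ is left (and right) Noetherian, being finitely generated over the Noetherian ring $R_0$, and that $N$ is finitely generated as an $S$-module since it is finitely generated as an $R_0$-module. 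Hence a resolution $P_\bullet \to N$ with each $P_i \cong S^{n_i}$ exists, and each $P_i$ is then finitely generated and projective over $R_0$.

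The crucial step is to show that every syzygy $\Omega_i = \ker(P_{i-1} \to P_{i-2})$ (with the convention $\Omega_0 = N$, $P_{-1} = N$) is flat as an $R_0$-module. Since $\Omega_0 = N$ is $R_0$-projective by hypothesis, this follows by induction on $i$: from the short exact sequence $0 \to \Omega_{i+1} \to P_i \to \Omega_i \to 0$, the standard fact that in $0 \to A \to B \to C \to 0$ of $R_0$-modules with $B, C$ flat also $A$ is flat gives flatness of $\Omega_{i+1}$. Granted this, for any commutative $R_0$-algebra $R'$ the base-changed complex $(P_\bullet)_{R'} = R' \otimes_{R_0} P_\bullet$ is exact over $N_{R'}$ (since tensoring each short exact sequence with $R'$ preserves exactness, by $R_0$-flatness of the syzygies), each $(P_i)_{R'}$ is a free $S_{R'}$-module, and a standard base-change identity gives
\begin{equation*}
\Tor^{S_{R'}}_i(M_{R'}, N_{R'}) = H_i\bigl(M_{R'} \otimes_{S_{R'}} (P_\bullet)_{R'}\bigr) = H_i\bigl(R' \otimes_{R_0} (M \otimes_S P_\bullet)\bigr).
\end{equation*}

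Now $R$ is the filtered union of its $R_0$-subalgebras $R'$ which are finitely generated over $R_0$ (the system is directed because the join of two such subalgebras is again finitely generated). Since $R \otimes_{R_0} -$ commutes with filtered colimits and filtered colimits commute with taking homology, applying the previous identity both for $R' = R$ and for each finitely generated $R'$ yields
\begin{equation*}
\Tor^{S_R}_i(M_R, N_R) = \varinjlim_{R'} \Tor^{S_{R'}}_i(M_{R'}, N_{R'}),
\end{equation*}
which vanishes for $i > 0$ by hypothesis. The main technical obstacle is the syzygy-flatness step, as it is the ingredient that makes the base change of $P_\bullet$ to an arbitrary $R'$ remain a resolution; once this is in hand, the remainder of the argument is essentially formal.
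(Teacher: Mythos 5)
Your proof is correct. It follows the same underlying idea as the paper — exhaust $R$ by its finitely generated $R_0$-subalgebras and pass to the limit — but packages it differently. The paper isolates a single step: it records first the observation that an $R_0$-linear map which becomes injective after base change to every finitely generated $R_0$-subalgebra of $R$ also becomes injective after base change to $R$; then, given one short exact sequence $0\to H\to P\to N\to 0$ of $S$-modules with $P$ finitely generated projective, it applies this observation to $M\otimes_S H\to M\otimes_S P$ to get $\Tor^{S_R}_1(M_R,N_R)=0$, and dimension-shifts for the higher Tor groups (implicitly using that $H$ is again a finitely generated projective $R_0$-module because the sequence splits over $R_0$). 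You instead build an entire resolution of $N$ by finitely generated free $S$-modules, check $R_0$-flatness of all syzygies so that the whole resolution base-changes to a resolution over any $R_0$-algebra, and then appeal to the fact that homology commutes with filtered colimits of chain complexes. What your route buys is that the limit argument is applied once, uniformly in the homological degree, avoiding an explicit dimension-shifting induction; what it costs is the need to invoke Noetherianity of $S$ to secure finitely generated free resolutions and the extra (though routine) syzygy-flatness induction. Both arguments are correct and roughly comparable in length; the paper's is slightly more economical in hypotheses at each step, while yours makes the "why this commutes with the colimit" step more transparent because it is phrased at the level of the whole complex.
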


\begin{proof} Choose an $S$-module surjection $P\to N$, where $P$ is a finitely projective $S$-module and let 
	\begin{equation}
		\label{hpn}
		0\to H\to P\to N\to 0
	\end{equation}
		be the corresponding short exact sequence. Then we have that \\
 $M_{R'}\otimes_{S_R'} N_{R'} \to H_{R'}\otimes_{S_{R'}}  P_{R'}$, is injective, i.e., 
 $$R'\otimes_{R_0} (M\otimes_{R_0} H) \to R'\otimes_{R_0} (M\otimes_{R_0} P) $$
     is injective (whenever $R'$ is a subring of $R$ finitely generated over $R_0$, since $\Tor_1^{S_{R'}}(M_{R'},N_{R'})=0$). Hence we have that 
     $$R\otimes_{R_0} (M\otimes_{R_0} H) \to R\otimes_{R_0} (M\otimes_{R_0} P) $$
      is injective, i.e., $M_R \otimes_{S_R} N_R  \to H_R \otimes_{S_{R}}  P_R$
      is injective and therefore $\Tor_1^{S_R}(M_R,N_R)=0$. Now for $i>1$ it
      follows that $\Tor^{S_R}_i(M_R,N_R)=0$ using~\eqref{hpn}  and dimension shifting.
\end{proof}

\q Let $\lambda\in \Lambda(n,r)$. Then we have the one dimensional module $\K_\lambda$ for the quantised Borel subgroup $B(n)$ over $\K=\que(t)$. Thus $\K_\lambda$ is naturally a left  $S^-_{\que(t),t}(n,r)$-module and we obtain an $S^-_{R,q}(n,r)$-module $R_\lambda$, free of rank one over $R$, by base change. We write $R_\lambda^*$ for the right $S_{R,q}(n,r)$-module dual of $R_\lambda$.
Similarly we construct an $S^+_{R,q}(n,r)$-module, also denoted  $R_\lambda$, free of rank one over $R$.

\begin{theorem}  Let $R$ be a commutative ring and let $q$ be a unit in $R$.  Let  $\lambda\in \Lambda^+(n,r)$. Then we have 
	$$\Tor^{S^-_{R,q}(n,r)}_i(R_\lambda^*,S_{R,q}(n,r))=0$$
for all $i>0$.

\end{theorem}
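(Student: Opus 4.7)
The plan is to derive the theorem from the field case (Corollary~7.10) by two successive base-change reductions: first Lemma~8.2 to pass from $R$ to Noetherian subrings, then Proposition~8.1 to pass from a Noetherian ring to its residue fields.

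The structure map $\Z = \zz[t,t^{-1}] \to R$ sending $t \mapsto q$ makes $R$ into a $\Z$-algebra. Let $R_0 \subseteq R$ denote the image of $\Z$; being a quotient of $\Z$, it is Noetherian and contains the unit $q$. Everything in sight is obtained by base change from its $\Z$-form: by construction, $S^-_{R,q}(n,r) = R \otimes_\Z S^-_{\Z,t}(n,r)$, $S_{R,q}(n,r) = R \otimes_\Z S_{\Z,t}(n,r)$, and likewise $R_\lambda$ is base-changed from the corresponding $S^-_{\Z,t}(n,r)$-module, so the same applies to its $R$-dual $R_\lambda^*$. These $\Z$-forms are finitely generated free over $\Z$ (by Lemmas~3.1 and 3.3, together with the rank-one freeness of $R_\lambda$), and this freeness is preserved by arbitrary base change. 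This puts me in position to apply Lemma~8.2 with this $R_0$; it remains to establish the required Tor-vanishing for every subring $R' \subseteq R$ containing $R_0$ and finitely generated over it.

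Fix such an $R'$. Since $R_0$ is Noetherian and $R'$ is finitely generated over $R_0$, $R'$ is itself Noetherian. I apply Proposition~8.1 to the $R'$-algebra $S = S^-_{R',q}(n,r)$ with $D = R_\lambda^*$ and $E = S_{R',q}(n,r)$ (all finite free over $R'$, so the hypotheses are met). For each maximal ideal $M$ of $R'$ with residue field $\K = R'/M$, the image of $q$ in $\K$ remains a unit, and by the compatibility of the $\Z$-form construction built into Section~8, the reductions $S_\K$, $D_\K$, $E_\K$ are canonically identified with $S^-_{\K,q}(n,r)$, $\K_\lambda^*$, $S_{\K,q}(n,r)$. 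Then Corollary~7.10 gives $\Tor_i^{S_\K}(D_\K, E_\K) = 0$ for all $i>0$, so Proposition~8.1 yields the vanishing over $R'$. Lemma~8.2 then transports the vanishing up to $R$, completing the proof.

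No new homological input is needed beyond these three tools; the only substantive work is the bookkeeping of base-change identifications, i.e.\ checking that the $\Z$-form construction commutes with passage to the subalgebra $S^-$, with duals, and with reduction modulo $M$. This is the main obstacle in the sense that it is the step most easily botched, but it is routine given the definitions assembled in Section~8 and the bases exhibited in Section~3.
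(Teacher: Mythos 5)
Your proof is correct and takes essentially the same route as the paper: the paper's own proof is the one-line remark that the Noetherian case follows from Corollary~7.10 and Proposition~8.1, and the general case from Lemma~8.2, which is precisely the scheme you have spelled out, with the sensible choice of $R_0$ as the image of $\Z$ in $R$.
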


\begin{proof} The result for $R$ Noetherian follows from Corollary~7.10 and Proposition~8.1 and the result for general $R$ follows from Lemma~8.2.

\end{proof}

\q We shall also give the  version of this result for the positive Borel-Schur
algebra. Suppose $J$ is an anti-automorphism of a ring $S$ and that~$S$ has
subrings $S^-$ and $S^+$ interchanged by $J$.   Given a right $S^-$-module
(\resp left)  $M$ we write $M^J$ for the same group $M$ regarded as a left
(\resp right) $S^+$-module with action $xm=mJ(x)$ (\resp $mx=J(x)m$), $m\in M$,
$x\in S^+$.  Note that if $M$ is $S$, regarded as a right $S^-$-module via right
multiplication, then $M^J$ is $S$ regarded as a left $S^+$-module via
left multiplication.  Similarly if   $M$ is $S$, regarded as a left  $S^-$-module via
left multiplication, then $M^J$ is $S$ regarded as a right $S^+$-module via right
multiplication.  If $M$ is a right $S^-$-module and $N$ is a left $S^-$-module then we have
$$\Tor_i^{S^-}(M,N)\cong  \Tor_i^{S^+}(N^J,M^J)$$
for all $i\geq 0$. 

\q Recall that, by \cite{SD42}, pg.~82, we have an involutary 
anti-auto\-mor\-phism $J$ of the Schur algebra $S_{\K,q}(n,r)$ over a field $\K$.  For
$i\in I(n,r)$ we write $d(i)$ for the number of pairs $(a,b)$ such that $1\leq
a< b\leq r$ and $i_a<i_b$. Then, for $i,j\in I(n,r)$, we have 
$$c_{ji}(\xi)q^{d(j)}=c_{ij}(J(\xi))q^{d(i)}$$
(see \cite{SD42}, p83) and clearly $J$ is determined by this property.  Taking
$\K=\que(t)$ and $q=t$, it is easy to check that $J$ preserves $S_{\Z,t}(n,r)$
and interchanges $S^-_{\Z,t}(n,r)$ and $S^+_{\Z,t}(n,r)$. Hence $J$  induces,
for a general commutative ring $R$ and unit $q\in R$,  an anti-automorphism,
which we also denote $J$, of $S_{R,q}(n,r)$ which interchanges $S^-_{R,q}(n,r)$
and $S^+_{R,q}(n,r)$. Moreover, for $\lambda\in \Lambda(n,r)$, starting with the
left $S^-_{R,q}(n,r)$-module $R_\lambda$, we have that $(R_\lambda^*)^J$ is the
left $S^+_{R,q}(n,r)$-module, also denoted by $R_\lambda$. Thus from Theorem 8.3 we get our quantised version of Woodcock's Theorem.

\begin{theorem}
	\label{mainfirst}
	Let $R$ be a commutative ring and let $q$ be a unit in $R$.  Let  $\lambda\in \Lambda^+(n,r)$. Then we have 
	$$\Tor^{S^+_{R,q}(n,r)}_i(S_{R,q}(n,r),R_\lambda)=0$$
for all $i>0$.

\end{theorem}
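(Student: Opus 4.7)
The proof is a straightforward transport from Theorem 8.3 via the anti-automorphism $J$; essentially all of the substantive homological content has already been packaged into Theorem 8.3, which in turn rests on Kempf vanishing (Corollary 7.10) together with the local-global base-change machinery of Proposition 8.1 and Lemma 8.2. My plan is to assemble the three ingredients already set up in the paragraph preceding the theorem statement.

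First I would invoke Theorem 8.3 to obtain
$$\Tor^{S^-_{R,q}(n,r)}_i\bigl(R_\lambda^{*},\,S_{R,q}(n,r)\bigr)=0\quad \text{for all } i>0,$$
where $R_\lambda^{*}$ is the right $S^-_{R,q}(n,r)$-dual of the one-dimensional left module $R_\lambda$, and $S_{R,q}(n,r)$ is viewed as a left $S^-_{R,q}(n,r)$-module by left multiplication. Next I would apply the general identity recalled just above the statement: for an anti-automorphism $J$ of a ring $S$ interchanging subrings $S^-$ and $S^+$, and for a right $S^-$-module $M$ and left $S^-$-module $N$, one has a natural isomorphism
$$\Tor_i^{S^-}(M,N)\;\cong\;\Tor_i^{S^+}(N^{J},M^{J}),\quad i\geq 0.$$

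Specializing to $M=R_\lambda^{*}$ and $N=S_{R,q}(n,r)$, the paragraph preceding the theorem explicitly identifies $(R_\lambda^{*})^{J}$ with $R_\lambda$ as a left $S^+_{R,q}(n,r)$-module, and $S_{R,q}(n,r)^{J}$ (starting from the left regular representation over $S^-_{R,q}(n,r)$) with $S_{R,q}(n,r)$ regarded as a right $S^+_{R,q}(n,r)$-module by right multiplication. Substituting these identifications yields
$$\Tor^{S^+_{R,q}(n,r)}_i\bigl(S_{R,q}(n,r),\,R_\lambda\bigr)\;\cong\;\Tor^{S^-_{R,q}(n,r)}_i\bigl(R_\lambda^{*},\,S_{R,q}(n,r)\bigr)=0$$
for all $i>0$, which is precisely the claim. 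The only point that requires care is keeping the left/right structures straight across $J$ and checking that the two module identifications are the ones actually asserted in the preceding discussion (both follow from the explicit formula $c_{ji}(\xi)q^{d(j)}=c_{ij}(J(\xi))q^{d(i)}$ recalled from \cite{SD42}); beyond this bookkeeping the argument is entirely formal and no further obstacle arises.
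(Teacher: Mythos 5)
Your proposal is correct and matches the paper's argument exactly: the paper likewise deduces Theorem~\ref{mainfirst} from Theorem~8.3 by means of the anti-automorphism $J$ of $S_{R,q}(n,r)$ interchanging $S^-_{R,q}(n,r)$ and $S^+_{R,q}(n,r)$, the identity $\Tor_i^{S^-}(M,N)\cong\Tor_i^{S^+}(N^J,M^J)$, and the identifications $(R_\lambda^*)^J\cong R_\lambda$ and $S_{R,q}(n,r)^J\cong S_{R,q}(n,r)$ described in the paragraph preceding the theorem. No gap; this is the intended proof.
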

Recall that $A_{\rat(t),t}(n,r)$ has basis $\left\{\, c_{ij} \,\middle|\,
(i,j)\in Y(n,r)
\right\}$. Let 
\begin{equation*}
 \left\{\, \xi_{ij} \,\middle|\, (i,j)\in I(n,r) \right\}
 \end{equation*}
	be the dual basis of $S_{\rat(t),t}(n,r)$, that is 
\begin{equation*}
	\xi_{ij}(c_{i',j'}) = 
	\begin{cases}
		1, & i=i',\ j=j'\\
		0, & (i',j') \in Y(n,r),\ (i,j)\not=(i',j').
	\end{cases}
\end{equation*}
Then it is straightforward that $\left\{\, \xi_{ij} \,\middle|\, (i,j)\in
Y(n,r) \right\}$ is a $\Z$-basis of $S_{\Z,t}(n,r)$. We will also denote by the
same symbol $\xi_{ij}$ the image of $\xi_{ij}$ in $S_{R,q}(n,r)$ under base
change. 
For $\lambda\in \Lambda(n,r)$,
we will write $\xi_\lambda$ for $\xi_{l(\lambda), l(\lambda)}$.

	Note that in Section~2 of~\cite{SD42} there is used a slightly different
	parameterisation of the set $\left\{\, \xi_{ij} \,\middle|\, (i,j)\in
	Y(n)
	\right\}$. As we will refer the results of \cite{SD42}, 
	we will 
	explain this in more detail.  Let $U$ be the subset of $I(n,r)\times
	I(n,r)$ of pairs $(l\left( \lambda \right), j)$ such that 
	$j_{1}\ge \dots\ge j_{\lambda_1}$, $j_{\lambda_1 + 1}\ge \dots \ge
	j_{\lambda_1+\lambda_2 } $ and so on. 
Now for every $\lambda\in \Lambda(n,r)$, there is 
a permutation $\pi_\lambda\in \Sigma_\lambda$ of order~$2$ such that
$\left( l(\lambda),j \right) \in Y(n,r)$ if and only if  $\left( l\left( \lambda
\right), j\pi_\lambda
\right)\in U$. 
Since the generators $c_{ab}$ and $c_{ab'}$ commute for any $b$ and $b'$, we
have $c_{l(\lambda),j} = c_{l(\lambda),j\pi_\lambda}$.
Thus we get for any $(i,j)\in Y(n,r)$ and $(i',j')\in U$ that
\begin{equation*}
	\xi_{ij}(c_{i',j'}) = \xi_{ij}\left( c_{i',j'\pi_\lambda} \right)=
	\begin{cases}
		1, & i=i',\ j = j'\pi_\lambda\\
		0, & \mbox{otherwise.}
	\end{cases}
\end{equation*}
Therefore $\xi_{ij}$ in our notation corresponds to $\xi_{i,j\pi_\lambda}$ in the
notation of~\cite{SD42}.

Using the above identification,
from \cite{SD42}, page 38, for $\lambda$, $\mu\in \Lambda(n,r)$ we obtain
\begin{equation*}
	\xi_\lambda \xi_{ij} \xi_\mu=
	\begin{cases}
		\xi_{ij}, & i\in \lambda,\ j\in \mu,\\
		0, &\mbox{otherwise}.
	\end{cases}
\end{equation*}
Moreover, $1= \sum_{\lambda\in \Lambda(n,r)}\xi_\lambda$ is an orthogonal
idempotent decomposition of the identity. 

Similarly to Lemma~3.2, we have that the kernel of the projection
\begin{equation*}
f\colon 	A_{\rat(t),t}(n,r) \twoheadrightarrow A^+_{\rat(t),t}(n,r)
\end{equation*}
has  basis 
\begin{equation*}
	\left\{\, c_{ij} \,\middle|\, (i,j)\in Y(n,r)\mbox{ but not } i\le j
	\right\}.
\end{equation*}
Thus for any $(i,j)\in Y(n,r)$ such that $i\le j$, we get that the restriction
of $\xi_{ij}$ to $\mathrm{Ker}(f) $ is zero. Therefore we can consider
$\xi_{ij}$ as an element
of $S^+_{\rat(t),t}(n,r) = A^+_{\rat(t),t}(n,r)^*$. Using a dimension argument 
we get that 
\begin{equation}
	\label{basis}
	\left\{\, \xi_{ij} \,\middle|\,  (i,j)\in Y(n,r),\ i\le j \right\}
\end{equation}
is a $\rat(t)$-basis of $S^+_{\rat(t),t}(n,r)$. Obviously, it is also a
$\Z$-basis of 
$S^+_{\Z,t}(n,r)$ and, by base change, an $R$-basis for any $S^+_{R,q}(n,r)$. 

We now recall the normalised bar construction.
Note that this is a special case of the construction described in Chapter~IX,~\S 7
of~\cite{homology} and its detailed treatment can be found in Section~3
of~\cite{advances}.  

Let $S$ be a ring with identity and $S'$ a subring of $S$. We assume that
there is an  epimorphism of rings $p\colon S\to S'$ that splits the natural
inclusion of $S'$ into $S$. Write $\kerp$ for the kernel of $p$.  
Then $\kerp$ is an $S'$-bimodule. 

For every $S$-module $M$ we define the chain complex 
\begin{equation*}
B_*\left(
S,S',M
\right) = (B_k\left(
S,S',M \right), d_k)_{k\ge -1}\end{equation*}
 as follows:
\begin{align*}
&	\begin{aligned}	B_{-1}\left( S,S',M \right) & = M, & B_0 \left(S,S',M \right)&= S\otimes
		M,\end{aligned}\\[1ex] & B_k\left( S,S',M \right)  = S \otimes \kerp^{\otimes k}\otimes M,
	\ \forall k\ge 1,\\
& d_k = \sum_{t=0}^k (-1)^t d_{kt}\colon B_{k}\left( S,S',M \right)\to B_{k-1}\left( S,S',M
\right),
\end{align*}
where all the tensor products  are over $S'$ and  
the $S$-module homomorphisms $d_{kt}\colon B_k\left( S,S',M \right)\to
B_{k-1}\left( S,S',M \right)$, $k\ge 0$, $0\le t\le k$ are given
by 
\begin{equation*}
	\begin{aligned}
	&	d_{00} \left( s\otimes m \right)  = sm\\
	&	d_{k0}\left( s\otimes s_1\otimes \dots \otimes s_k\otimes m \
		\right) 
		= ss_1 \otimes s_2 \otimes \dots \otimes s_k\otimes m\\
	&	d_{kt}\left( s\otimes s_1\otimes \dots \otimes s_k \otimes m
		\right) 
		= s \otimes \dots \otimes s_ts_{t+1}\otimes \dots \otimes m,\
		1\le t\le
		k-1\\
	&	d_{kk} \left( s\otimes s_1\otimes\dots \otimes s_k\otimes m
		\right) = s\otimes s_1
		\otimes \dots \otimes s_{k-1}\otimes s_k m,\ k\ge 0. 
	\end{aligned}
\end{equation*}
	The complex $\left( B\left( S,S',M \right), d \right)$ is exact and
is called the \emph{normalised bar resolution} of $M$ over $S$. 
Now we specialize this construction to the case of the quantised Borel-Schur algebra. 

Define
\begin{equation*}
	L = L_{R,q} =  \bigoplus_{(i,i)\in Y(n,r)} R \xi_{ii} =
	\bigoplus_{\lambda\in \Lambda(n,r)} R\xi_{\lambda}
\end{equation*}
 and 
\begin{equation*}
	\ideal = \ideal_{R,q} =  
	\bigoplus_{
	\begin{smallmatrix}
		(i,j)\in Y(n,r)\\[0.5ex]
		i<j
	\end{smallmatrix}
	}R\xi_{ij}.
\end{equation*}
Then $L \oplus \ideal = S^+_{R,q}(n,r)$. 

\begin{proposition}
	The $R$-module $L_{R,q}$ is a split subalgebra of $S^+_{R,q}(n,r)$ and
	$\ideal_{R,q}$ is a split ideal of $S^+_{R,q}(n,r)$.
\end{proposition}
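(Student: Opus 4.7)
The plan is to realise both $L$ and $\ideal$ by dualising a natural bialgebra decomposition of $A^+_{R,q}(n,r)$. Let $A(T)(n,r)$ denote the $R$-submodule of $A^+_{R,q}(n,r)$ spanned by the ``diagonal'' basis elements $c_{l(\lambda),l(\lambda)}$, $\lambda \in \Lambda(n,r)$, and let $\kerp'$ be the span of the complementary basis vectors $c_{ij}$ with $(i,j)\in Y(n,r)$ and $i<j$. By the basis~\eqref{basis} and the analogous basis for $A^+_{R,q}(n,r)$ noted just before it, we have $A^+_{R,q}(n,r) = A(T)(n,r)\oplus \kerp'$ as $R$-modules, and dually $S^+_{R,q}(n,r) = L_{R,q}\oplus \ideal_{R,q}$.

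I would first check that $A(T)(n,r)$ is a sub-bialgebra of $A^+_{R,q}(n,r)$: the generators $c_{ii}$ pairwise commute in $A^+$ (apply the relation $c_{js}c_{ir}=c_{ir}c_{js}-(q-1)c_{is}c_{jr}$ with $r=i$, $s=j$, which in $A^+$ reduces to $c_{jj}c_{ii}=c_{ii}c_{jj}$ since $c_{ji}=0$), and each $c_{ii}$ is group-like, so $\de$ maps $A(T)(n,r)$ into $A(T)(n,r)\otimes A(T)(n,r)$. Next I would verify that $\kerp'$ is both a two-sided ideal and a coideal of $A^+_{R,q}(n,r)$. The ideal statement follows by an induction via the defining relations of the same flavour as Lemma~3.2. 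The coideal property is immediate from
\begin{equation*}
 \de(c_{ij}) = \sum_{i\le k\le j} c_{ik}\otimes c_{kj}
\end{equation*}
in $A^+$: when $i<j$, the $k=i$ summand lies in $A^+\otimes \kerp'$, the $k=j$ summand lies in $\kerp'\otimes A^+$, and every intermediate summand has both factors in $\kerp'$.

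Consequently, the inclusion $A(T)(n,r)\hookrightarrow A^+_{R,q}(n,r)$ and the quotient $A^+_{R,q}(n,r)\twoheadrightarrow A^+_{R,q}(n,r)/\kerp'$ are bialgebra maps whose composite realises the canonical isomorphism $A(T)(n,r)\cong A^+_{R,q}(n,r)/\kerp'$. Dualising yields an algebra surjection $p\colon S^+_{R,q}(n,r)\to A(T)(n,r)^*$ and an algebra injection $j\colon A(T)(n,r)^*\hookrightarrow S^+_{R,q}(n,r)$ with $p\circ j = \id$. Matching dual bases identifies $A(T)(n,r)^*$ with $L_{R,q}$ (the image of $\xi_\lambda$ is $\xi_{l(\lambda),l(\lambda)}$), so $L_{R,q}$ is a split subalgebra. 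Since $\ideal_{R,q}$ is precisely the annihilator of $A(T)(n,r)$ in $S^+_{R,q}(n,r)$, it coincides with $\ker p$ and is therefore a two-sided ideal.

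The main obstacle is verifying that $\kerp'$ is a two-sided ideal of $A^+_{R,q}(n,r)$: this is the analogue for $A^+$ of the computation underlying Lemma~3.2, requiring careful book-keeping of the straightening moves produced by the $(q-1)$-correction in the defining relations and the observation that every such move on a product containing some $c_{ab}$ with $a<b$ produces only further strictly off-diagonal terms. Everything else is formal once this decomposition has been established.
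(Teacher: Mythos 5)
Your dualisation strategy is a genuinely different route from the paper: the paper works directly inside $S^+_{\rat(t),t}(n,r)$, noting that $L$ is obviously a subalgebra because the $\xi_\lambda$ are orthogonal idempotents, and verifying that $\ideal$ is an ideal by the one-line computation $(\xi_{ij}\xi_{i'j'})(\bar c_{l(\lambda),l(\lambda)})=\sum_h \xi_{ij}(\bar c_{l(\lambda),h})\xi_{i'j'}(\bar c_{h,l(\lambda)})$, where only $h=l(\lambda)$ can contribute. That computation is the transpose of the comultiplication identity you write down, so the two arguments share the same core observation; yours packages it more conceptually, but the paper's version is shorter. Unfortunately, you have misidentified which part of your plan actually carries the weight. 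The object $A^+_{R,q}(n,r)$ is only a coalgebra, not a bialgebra, so ``bialgebra maps'' is the wrong notion, and the two-sided ideal property of $\kerp'$ in $A^+$ is never used: what the dualisation needs is (a) $A(T)(n,r)$ is a \emph{subcoalgebra}, from which $\ideal=\ker(p)$ is a two-sided ideal of $S^+$ because the annihilator of a subcoalgebra always is one; and (b) $\kerp'$ is a \emph{coideal}, from which $L=\kerp'^\perp$ is a subalgebra. So the computation you flag as ``the main obstacle'' is irrelevant, while the coideal claim you call ``immediate'' is the one that must be nailed down.

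And there is a real gap in that coideal claim. Your display for $\de(c_{ij})$ is stated for single generators, but $\kerp'$ is spanned by monomials $\bar c_{ij}$ with $i,j$ multi-indices. For a multi-index $k$ with $i<k\le j$ the term $\bar c_{ik}$ is generally \emph{not} a basis element (one may have $(i,k)\notin Y(n,r)$), so ``$\bar c_{ik}\in\kerp'$'' is not immediate from the definition. You need the auxiliary fact that for multi-indices $i\le k$ with $i\ne k$, the expansion of $\bar c_{ik}$ in the $Y(n,r)$-basis involves only strictly off-diagonal basis elements. This does hold, because the defining relations preserve the contents (weights) of the left and right multi-indices, and $i\le k$ with equal contents would force $\sum_a i_a=\sum_a k_a$ and hence $i=k$; but that argument must be made, and it is the genuine content of the coideal step. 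Once you supply it (and drop the unnecessary ideal computation), your proof goes through.
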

\begin{proof}
	It is obvious that $L_{R,q}$ is a subalgebra of $S^+_{R,q}(n,r)$.
	
	Now, we will check that $\ideal_{R,q}$ is an ideal of $S^+_{R,q}(n,r)$. 
	By a base change argument, it is enough to check that $\ideal_{\Z,t}$ is
	an ideal of $S^+_{\Z,t}(n,r)$ and this can be reduced to showing that
	$\ideal_{\rat(t),t}(n,r)$ is an ideal of $S^+_{\rat(t),t}(n,r)$.
	
	Let $(i,j)$, $(i',j')\in Y(n,r)$ such that $i\le j$ and $i'\le j'$. Then the coefficient of $\xi_\lambda$
	in the expansion of the product $\xi_{i,j}\xi_{i',j'}$ in  the basis~\eqref{basis}
	of $S^+_{\rat(t),t}(n,r)$
	is given by
	\begin{equation*}
		(\xi_{i,j} \xi_{i',j'})	\left( \overline{ c }_{l\left( \lambda
		\right), l\left( \lambda \right)} \right) = 
		\sum_{h\in I(n,r)} \xi_{i,j}\left( \overline{ c}_{l(\lambda),h}  \right) 
		\xi_{i',j'}\left( \overline{ c }_{h,l\left( \lambda \right)}
		\right),
	\end{equation*}
	where $\overline{ c }_{ij}$ denotes the image of $c_{ij}$ under the
	epimorphism
	\begin{equation*}
		A_{\rat(t),t}(n,r)\twoheadrightarrow A^+_{\rat(t),t}(n,r).
	\end{equation*}
	Thus $\overline{ c }_{l(\lambda),h}\not=0$ and $\overline{ c }_{h,l\left(
	\lambda \right)}\not=0$ imply that $l(\lambda)\le h\le l(\lambda)$.
	Hence 
	\begin{equation*}
			(\xi_{i,j} \xi_{i',j'})	\left( \overline{ c }_{l\left( \lambda
			\right), l\left( \lambda \right)} \right) =
			\xi_{i,j}\left( \overline{ c }_{l(\lambda),l(\lambda)}
			\right) \xi_{i',j'}\left( \overline{ c
			}_{l(\lambda),l(\lambda)} \right),
	\end{equation*}
	which is zero, if $i\not=j$ or $i'\not=j'$. This proves that the product
	$\xi_{i,j}\xi_{i',j'}$ lies in $\ideal_{\rat(t),t}$, if $\xi_{i,j}\in
	\ideal_{\rat(t),t}$
	or $\xi_{i',j'}\in \ideal_{\rat(t),t}$.
\end{proof}
 For any $\lambda\in\Lambda(n,r)$ we can apply the normalised bar construction to
$S^+_{R,q}(n,r)$, ${L}$, and the rank-one module $R_\lambda$.

Denote $B_k\left( S^+_{R,q}(n,r), \ideal, R_\lambda \right)$ by $B^+_{k,\lambda}$ for
$k\ge -1$. 
We get
\begin{align*}
	&B^+_{-1,\lambda}  = R_\lambda, \  
	B^+_{0,\lambda}  = S^+_{R,q}\left( n,r \right)\otimes_{L}
	R_\lambda,\\[2ex]
	& B^+_{k,\lambda}  = S^+_{R,q}\left( n,r \right)\otimes_{L}
	\ideal^{\otimes k} \otimes_{L}
	R_{\lambda},\ k\ge 1.
\end{align*}
For any $\mu\in \Lambda(n,r)$,  $M\in \mbox{mod-}L$ and
$N\in L\mbox{-mod}$, we have
\begin{equation*}
	\left( M\otimes_L R\xi_\mu \right)\otimes_{R} \left(
	R\xi_\mu\otimes_L N
	\right) \cong M\otimes_L R\xi_\mu \otimes_L N.
\end{equation*}
Thus
\begin{align*}
	M\otimes_L N &\cong M\otimes_L L \otimes_L N \cong \bigoplus_{\mu\in
	\Lambda}
	\left( M\otimes_L R\xi_\mu \right) \otimes_{R} \left( R\xi_\mu\otimes_L
	N
	\right) \\& \cong \bigoplus_{\mu\in\Lambda} M\xi_{\mu}\otimes_R \xi_\mu N,
\end{align*}
since $M\otimes_L R\xi_\mu\cong M\xi_\mu$ and $R\xi_\mu\otimes_L N \cong \xi_\mu
N$. Hence
\begin{align*}
	B^+_{0,\lambda}&\cong 
	\bigoplus_{\mu\in \Lambda} S^+_{R,q}\left( n,r \right)\xi_{\mu}\otimes_R
	\xi_{\mu} R_\lambda \\ &= 
	S^+_{R,q}\left( n,r \right) \xi_\lambda \otimes_R R_\lambda \cong
	S^+_{R,q}\left( n,r \right) \xi_\lambda,
\end{align*}
since $\xi_\mu R_\lambda = 0$ unless $\mu = \lambda$. 
Further
\begin{multline*}
	B_{k,\lambda}^+
	\cong \bigoplus_{\mu^{\left( 1 \right)},\dots,\mu^{\left( k+1 \right)}\in \Lambda}
	S^+_{R,q}\left( n,r \right)\xi_{\mu^{( 1 )}}\otimes_R
	\xi_{\mu^{\left( 1 \right)}} \ideal \xi_{\mu^{\left( 2 \right)}} \otimes_R
	\dots\\
	\otimes_R \xi_{\mu^{\left( k \right)}} \ideal \xi_{\mu^{\left( k+1
	\right)}} \otimes_R \xi_{\mu^{\left( k+1 \right)}} R_\lambda.
\end{multline*}
As $\left\{\, \xi_{ij} \,\middle|\,  \left( i,j \right)\in
Y\left( n,r
\right), \ i<j\right\}$ is an $R$-basis of $\ideal$, we get that $\xi_\mu
\ideal\xi_\tau$ is zero,
unless $\mu\gtdom \tau$. 
If $\mu\gtdom \tau$, then $\xi_{\mu} \ideal \xi_\tau$ has an $R$-basis 
\begin{equation*}
	\left\{\,  \xi_{ij} \,\middle|\,  (i,j)\in Y\left( n,r \right),\
	i<j,\ j\in \tau, i\in \mu
	\right\}. 
\end{equation*}
Thus for every $k\ge 1$ we can write 
\begin{equation*}
	B^+_{k,\lambda} \cong
	\bigoplus_{\mu^{(1)}\gtdom \dots\gtdom \mu^{(k)}\gtdom \lambda} 
	S^+_{R,q}\left( n,r \right)\xi_{\mu^{(1)}} \otimes
	\xi_{\mu^{(1)}} \ideal \xi_{\mu^{(2)}} \otimes \dots 
	\otimes \xi_{\mu^{(k)}} \ideal \xi_\lambda,
\end{equation*}
where $\otimes$ means $\otimes_R$. 
Note that $B_{k,\lambda}^+$ is zero for $k$ sufficiently large.

Given $\mu\gtdom \lambda$, we define $\Omega^+_k\left( \lambda,\mu \right)$ to
be the set of all sequences 
\begin{equation*}
(i^{(1)}, j^{(1)}),\dots,(i^{(k)}, j^{(k)}) 
\end{equation*}
of elements in $Y(n,r)$ such that $i^{(1)}\in \mu$, $j^{(k)}\in \lambda$,
and 
\begin{equation*}
	i^{(1)}<j^{(1)}\sim i^{(2)} <j^{(2)} \sim i^{(3)}< \dots <j^{(k)},
\end{equation*}
where $j\sim i$ means that $i$ and $j$ have the same content.  
Then  we have isomorphisms of $S^+_{R,q}(n,r)$-modules
\begin{equation}
	\label{eq:bk}
	B^+_{k,\lambda} \cong
	\begin{cases}
		S^+_{R,q}(n,r)\xi_\lambda, & k = 0 \\
\bigoplus_{\mu\gtdom \lambda }
\left(
S^+_{R,q}\left( n,r \right)\xi_{\mu} \right)^{ \#\Omega^+_k\left(\lambda, \mu
\right)}, & k\ge 1 .
	\end{cases}
\end{equation}
So we  get the following result
\begin{theorem}
 \label{bpluslambda}
 Let $\lambda\in \Lambda(n,r)$. Then the complex $B^+_{*,\lambda}$ is a projective
 resolution of $R_\lambda$ over $S^+_{R,q}(n,r)$. 
\end{theorem}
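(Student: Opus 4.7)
The plan is to verify the two defining properties of a projective resolution separately: exactness of the augmented complex and projectivity of each term $B^+_{k,\lambda}$, both of which essentially follow from facts already assembled in the preceding discussion.

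For exactness, I would simply invoke the general property of the normalised bar construction recalled just above the theorem: for any ring $S$ with a split subring $S'$ (via an epimorphism $p\colon S\to S'$ whose kernel $\kerp$ is the two-sided piece used in the bar construction) and any $S$-module $M$, the complex $B_{*}(S,S',M)$ is exact, with $M$ in the $(-1)$ position. Proposition~9.1 provides exactly this setup: $L_{R,q}$ is a split subalgebra and $\ideal_{R,q}$ is a split ideal complementing it, so the projection $S^+_{R,q}(n,r)\to L$ with kernel $\ideal$ splits the inclusion $L\hookrightarrow S^+_{R,q}(n,r)$. Applying the general construction with $S = S^+_{R,q}(n,r)$, $S' = L$, and $M = R_\lambda$ immediately yields exactness of $B^+_{*,\lambda}\to R_\lambda\to 0$.

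For projectivity, I would rely on the explicit description~\eqref{eq:bk}, which reduces the problem to showing that each summand $S^+_{R,q}(n,r)\xi_\mu$ is a projective left $S^+_{R,q}(n,r)$-module. This is where the orthogonal idempotent decomposition $1 = \sum_{\mu\in\Lambda(n,r)} \xi_\mu$ obtained earlier plays the crucial role: it gives a direct sum decomposition
\begin{equation*}
S^+_{R,q}(n,r) = \bigoplus_{\mu\in\Lambda(n,r)} S^+_{R,q}(n,r)\xi_\mu
\end{equation*}
of the regular module, so each $S^+_{R,q}(n,r)\xi_\mu$ is a direct summand of a free module of rank one, hence projective. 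Since $B^+_{k,\lambda}$ is a finite direct sum of such summands (finiteness of the index set $\Omega^+_k(\lambda,\mu)$ is clear from the definition, and the outer index set $\{\mu\gtdom\lambda\}$ is finite), projectivity of each $B^+_{k,\lambda}$ follows.

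Finally, I would note in passing that the resolution is of finite length: the dominance order $\gtdom$ on $\Lambda(n,r)$ has maximal chains of bounded length, so for $k$ exceeding this bound there are no strictly decreasing chains $\mu^{(1)}\gtdom\cdots\gtdom\mu^{(k)}\gtdom\lambda$ and hence $B^+_{k,\lambda}=0$. The only potential obstacle would have been verifying that the terms genuinely assemble into a projective resolution under the mild hypotheses (arbitrary $R$, arbitrary unit $q$), but since both the exactness of the normalised bar complex and the idempotent decomposition survive base change from $\Z$ to~$R$, no additional work beyond citing Proposition~9.1 and the formula~\eqref{eq:bk} is required.
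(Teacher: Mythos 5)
Your proof is correct and reproduces exactly the argument the paper leaves implicit when it states Theorem~\ref{bpluslambda} with the words ``So we get the following result'': exactness comes from the general fact that the normalised bar complex is exact once the split‐subalgebra/split‐ideal decomposition (the proposition you invoke, which is Proposition~8.5 in the paper's numbering, not~9.1) is in place, while projectivity and finite length are read off from the explicit identification~\eqref{eq:bk} of each $B^+_{k,\lambda}$ as a finite direct sum of the summands $S^+_{R,q}(n,r)\xi_\mu$ of the regular module coming from the orthogonal idempotent decomposition $1=\sum_\mu\xi_\mu$. The closing remark about base change from $\Z$ is harmless but unnecessary, since the bar construction and the idempotent decomposition are applied directly over $R$.
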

Now consider $\lambda\in\Lambda^+(n,r)$.
The \emph{Weyl module}  associated with $\lambda$ is 
\begin{equation*}
W_\lambda=	S_{R,q}(n,r) \otimes_{S^+_{R,q}(n,r)}R_\lambda.
\end{equation*}
By
Theorem~\ref{mainfirst}, $R_\lambda$ is an acyclic module for the functor
$S_{R,q}\otimes_{S^+_{R,q}(n,r)} -$. Therefore 
	$B_{*,\lambda}:=
	S_{R,q}(n,r)\otimes_{S^+_{R,q}(n,r)}
	B_{*,\lambda}^+$
	is a projective resolution of~$W_\lambda$. Moreover, since
\begin{equation*}
	S_{R,q}(n,r)\otimes_{S^+_{R,q}(n,r)} S^+_{R,q}(n,r) \cong S_{R,q}(n,r),
\end{equation*}
	we
get
the following theorem. 
\begin{theorem}
	\label{weylresolution}
	Let $\lambda\in \Lambda^+(n,r)$. Define the complex $B_{\lambda}$ as
	follows:
	\begin{align*}
		B_{-1,\lambda}&= W_\lambda, &	B_{0,\lambda} & =
		S_{R,q}(n,r)\xi_\lambda ,
	\end{align*}
	and for $k\ge 1$, we set
	$B_{k,\lambda}$ to be
	\begin{align*}
   \bigoplus_{
    \begin{smallmatrix}
      	 \mu^{(1)}\gtdom \dots \gtdom \mu^{(k)}\gtdom \lambda\\
  \mu^{(1)}, \dots , \mu^{(k)}\in \Lambda(n,r)
   \end{smallmatrix}
    }
    	S_{R,q}\left( n,r \right)\xi_{\mu^{( 1 )}}\otimes_R
	\xi_{\mu^{\left( 1 \right)}} \ideal \xi_{\mu^{\left( 2 \right)}} \otimes_R
	\dots
	\otimes_R \xi_{\mu^{\left( k \right)}} \ideal \xi_{\lambda}.
\end{align*}
Define $d_0$ to be the canonical projection of $S_{R,q}(n,r)\xi_\lambda$ on
$W_\lambda$, and
for $k\ge 1$ define $d_k \colon B_{k,\lambda}\to B_{k-1,\lambda}$ to be the
$R$-linear extension of the map
\begin{align*}
	x_0 \otimes x_1 \dots \otimes x_k & \mapsto \sum_{t=0}^{k-1}
(-1)^t	x_0 \otimes \dots \otimes x_t x_{t+1} \otimes \dots \otimes x_k.
\end{align*}
Then $B_{\lambda}$ is a projective resolution of $W_\lambda$ over
$S_{R,q}(n,r)$. 
\end{theorem}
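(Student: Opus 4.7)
My plan is to obtain the resolution $B_\lambda$ by applying the induction functor
$F := S_{R,q}(n,r)\otimes_{S^+_{R,q}(n,r)} -$
to the resolution $B^+_{*,\lambda}$ of $R_\lambda$ provided by Theorem~\ref{bpluslambda}, and then to check that the resulting complex has the form described in the theorem statement.

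First, I would note that the terms of $F(B^+_{*,\lambda})$ are projective $S_{R,q}(n,r)$-modules. Indeed, from~\eqref{eq:bk}, each $B^+_{k,\lambda}$ is a direct sum of modules of the form $S^+_{R,q}(n,r)\xi_\mu$, and
\begin{equation*}
S_{R,q}(n,r)\otimes_{S^+_{R,q}(n,r)} S^+_{R,q}(n,r)\xi_\mu \cong S_{R,q}(n,r)\xi_\mu,
\end{equation*}
which is a direct summand of $S_{R,q}(n,r)$ and hence projective. The same identification, combined with the fact that tensor product commutes with direct sums, lets me rewrite $F(B^+_{k,\lambda})$ exactly as the module $B_{k,\lambda}$ displayed in the statement; the differentials of $F(B^+_{*,\lambda})$ carry over verbatim from those of the normalised bar complex, yielding the formula given for $d_k$.

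Next I need exactness. Since $B^+_{*,\lambda}$ is a projective (hence flat) $S^+_{R,q}(n,r)$-resolution of $R_\lambda$, the homology of $F(B^+_{*,\lambda})$ in degree $i\ge 0$ computes $\Tor^{S^+_{R,q}(n,r)}_i(S_{R,q}(n,r),R_\lambda)$. By Theorem~\ref{mainfirst} these Tor groups vanish for $i>0$, and in degree $0$ we obtain $S_{R,q}(n,r)\otimes_{S^+_{R,q}(n,r)}R_\lambda = W_\lambda$ with the augmentation $d_0$ being exactly the canonical projection of $S_{R,q}(n,r)\xi_\lambda$ onto $W_\lambda$. Hence the augmented complex $B_\lambda \to W_\lambda \to 0$ is exact.

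There is no real obstacle beyond this: the heavy lifting is done by Theorem~\ref{bpluslambda} (existence of the bar resolution) and Theorem~\ref{mainfirst} (Tor vanishing), and what remains is purely the routine observation that induction from $S^+_{R,q}(n,r)$ to $S_{R,q}(n,r)$ preserves the projective resolution when the module being induced is acyclic for the induction functor. The one piece that requires a little care is the bookkeeping identifying $F(B^+_{k,\lambda})$ with the explicit direct sum over chains $\mu^{(1)}\gtdom\cdots\gtdom\mu^{(k)}\gtdom\lambda$, but this is immediate from~\eqref{eq:bk} together with $S_{R,q}(n,r)\otimes_{S^+_{R,q}(n,r)}(S^+_{R,q}(n,r)\xi_\mu)\cong S_{R,q}(n,r)\xi_\mu$ applied summand by summand.
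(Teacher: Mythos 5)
Your proof is correct and follows the same route as the paper: induce the normalised bar resolution $B^+_{*,\lambda}$ along $S_{R,q}(n,r)\otimes_{S^+_{R,q}(n,r)}-$, invoke Theorem~\ref{mainfirst} for the Tor-vanishing that gives exactness, and use $S_{R,q}(n,r)\otimes_{S^+_{R,q}(n,r)}S^+_{R,q}(n,r)\xi_\mu\cong S_{R,q}(n,r)\xi_\mu$ summand by summand to identify the terms. Nothing is missing.
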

We will show now that  $B_\lambda$ is  stable under base change.
Our resolutions for a moment will get   additional indices to emphasize dependence on
$R$ and $q\in R$. 
Let $R$ and $R'$ be commutative rings, $\theta\colon R\to R'$ a
ring homomorphism, $q\in R$ and $q':= \theta(q)\in R'$ 
invertible elements. 
 Since $S_{R,q}(n,r)\otimes_R
R' \cong S_{R',q}(n,r)$ and $B^R_{k,\lambda}$ are free $S_{R,q}(n,r)$-modules
for $k\ge 0$, we get that $( B^R_{k,\lambda}\otimes_R R'
,\ k\ge 0 )$  and $( B^{R'}_{k,\lambda},\ k\ge 0 )$ are
isomorphic complexes. 
Moreover, from the commutative diagram with exact rows
\begin{equation*}
	\xymatrix{
	B^R_{1,\lambda}\otimes_{R} R' \ar[r]^{d_{1}} \ar[d]_{\cong} &
	B^R_{0,\lambda}\otimes_R R'
 \ar[r]^{d_0} \ar[d]_{\cong} & W^R_\lambda\otimes_R R'\ar[r]
 \ar@{-->}[d]^{\exists!}_{\cong}& 0 \\
  B^{R'}_{1,\lambda}\ar[r]^{d_1} & B^{R'}_{0,\lambda}\ar[r]^{d_0} &
  W^{R'}_\lambda\ar[r] & 0,
 }
\end{equation*}
it follows that $B^R_{*,\lambda}\otimes_R R'$ and $B^{R'}_{*,\lambda}$ are
isomorphic also in degree $-1$. 
\section{The Hecke algebra and resolutions of co-Specht modules}
In this section we will use the notation of~\cite{dipperjames}
but  will denote by $\lng\left( \sigma \right)$ the length of  $\sigma \in \Sigma_r$. 
The \emph{Hecke algebra} $\hecke = \hecke_{R,q}$ associated with $\Sigma_r$ over
$R$ is free as an $R$-module with basis $\left\{\, T_\sigma \,\middle|\,
\sigma\in \Sigma_r
\right\}$, where
\begin{equation*}
T_s T_\sigma =
\begin{cases}
	T_{s\sigma}, & \mbox{if $\lng(s\sigma) = \lng(\sigma) + 1$}\\
	qT_{s\sigma} + (q-1)T_{\sigma}, & \mbox{otherwise},
\end{cases}
\end{equation*}
for $\sigma$, $s\in \Sigma_r$ with $\lng(s) = 1$.

In \cite{boltjemaish} Boltje and Maisch constructed for every
composition $\lambda$ of $r$ a chain complex
$\widetilde{C}_*^\lambda$ of $\hecke$-modules. These complexes
are lifting to the $q$-setting of the corresponding
$R\Sigma_r$-module complexes described in~\cite{boltjehartmann}. It
was proved in \cite{advances}, that $\widetilde{C}^\lambda_*$ is a 
permutation resolution of the co-Specht modules $\Hom_R(S^\lambda,R)$ for $q=1$ and
$\lambda$ a partition  of $r$. In this section we will prove a similar
result for any invertible $q$ in $R$.  

Choose any $n\ge r$, and let 
\begin{align*}
&\omega = (1,\dots,1,0,\dots, 0)\in
\Lambda(n,r)\\ &u = (1,2,\dots, r)\in I(n,r).
\end{align*}
	Then (see \cite[Section~0.23]{SD42})
\begin{equation*}
	\left\{\, \xi_{u,u\pi} \,\middle|\,  \pi\in \Sigma_r \right\}
\end{equation*}
is an $R$-basis of $\xi_\omega S_{R,q}(n,r) \xi_\omega$ and
\begin{align*}
	\xi_\omega S_{R,q}(n,r) \xi_\omega & \to \hecke\\
	\xi_{u,u\pi} & \mapsto T_{\pi^{-1}}
\end{align*}
is an isomorphism of $R$-algebras. Therefore we have the Schur functor
\begin{align*}
	F \colon S_{R,q}(n,r)\mbox{-mod} & \to \hecke\mbox{-mod}\\
	V \mapsto \xi_{\omega} V.
\end{align*}
Applying $F$ to the resolution $B_\lambda$ of $ W_\lambda$ with $\lambda\in
\Lambda^+(n,r)$,  we obtain an 
exact sequence $F\left( B_\lambda \right)$. It is our aim to prove
that $F\left( B_\lambda \right)$ and $\widetilde{C}^\lambda_*$ are
isomorphic chain complexes of $\hecke$-modules. This
will prove that the complexes $\widetilde{C}^\lambda_*$ are 
resolutions of the co-Specht modules $\Hom_R(S^\lambda,R)$ over $\hecke$.

We start by reminding to the reader some facts on Hecke algebra.
Denote by $\Sigma_\lambda$ the
standard Young subgroup corresponding to the composition~$\lambda$. 
By \cite[Lemma~1.1]{dipperjames} each right coset of $\Sigma_\lambda$ in
$\Sigma_r$ contains a unique element of minimal length, the \emph{distinguished}
coset representative of $\Sigma_\lambda$ in $\Sigma_r$. We denote by
$D_\lambda$ the set of these elements.  Given two compositions $\lambda$ and
$\mu$, we
also define $D_{\lambda,\mu} = D_\lambda \cap D_{\mu}^{-1}$. By
\cite[Lemma~1.6]{dipperjames} the set $D_{\lambda,\mu}$ is a system of 
$\Sigma_\lambda$-$\Sigma_\mu$ double coset representatives in $\Sigma_r$.

Recall that, for $\lambda\in \Lambda(n,r)$,  we write $l(\lambda)$ for the multi-index 
$(1^{\lambda_1}, 2^{\lambda_2}, \dots, n^{\lambda_n})$. Then every element of $Y(n,r)$ is  of the
form $(l\left( \lambda \right), j)$ for some $\lambda\in \Lambda(n,r)$ and
$j\in I(n,r)$. It is easy to see (cf. \cite[pp.~24-25]{SD25}), that for given
$\lambda$, $\mu\in \Lambda(n,r)$, there is a bijective correspondence 
\begin{equation}
	\label{7.1}
	\left\{\, (l(\lambda),j)\in Y(n,r) \,\middle|\, j\in \mu \right\}
	 \to D_{\lambda,\mu}, 
\end{equation}
defined as follows. For a given pair $(l(\lambda), j )$ the set
\begin{equation*}
\left\{\, \pi\in
\Sigma_r
\,\middle|\, l(\mu)\pi^{-1} = j \right\}
\end{equation*}
	is a $\Sigma_\mu$-orbit, and thus
contains a unique distinguished element $\bar{d}$ of $D_{\mu}^{-1}$. We define
$d$ as 
the representative of $\Sigma_\lambda \overline{ d } \Sigma_\mu$ in
$D_{\lambda,\mu}$. 

For $\lambda\in \Lambda(n,r)$, define $x_\lambda := \sum\limits_{\pi\in
\Sigma_\lambda} T_\pi$  and $M^\lambda := x_\lambda\hecke$. 
Then $\Hom_{\hecke}(M^\mu, M^\lambda)$ has an $R$-basis 
\begin{equation*}
	\left\{\, \varphi^{\lambda,\mu}_d \,\middle|\, d\in D_{\lambda,\mu}
	\right\},
\end{equation*}
where 
\begin{equation*}
	\varphi^{\lambda,\mu}_d (x_\mu) = \sum_{\pi\in \Sigma_\lambda d \Sigma_\mu}
	T_{\pi},\ d\in D_{\lambda,\mu}.
\end{equation*}
Theorem~3.2.5 and Corollary~3.2.6 in~\cite{SD25} say that
there is an algebra isomorphism
\begin{align}
	\label{8.1}
	S_{R,q}(n,r) & \to \bigoplus_{\mu,\lambda\in \Lambda(n,r)}
	\Hom_{\hecke}(M^\mu,M^\lambda)\\[2ex]
	\nonumber
	\xi_{l(\lambda), j} & \mapsto \varphi^{\lambda,\mu}_d,
\end{align}
where the correspondence $(l(\lambda),j)\mapsto d$ is given by \eqref{7.1}. 

Denote by $\tab (\lambda,\mu)$ the set of all $\lambda$-tableaux with content
$\mu$ and by $\tab^{rs}(\lambda,\mu)$ the set of all row semistandard
$\lambda$-tableaux with content $\mu$. 
Write
\begin{equation*}
	T^\lambda = 
	\begin{array}{cccccc}
		1 & 2 & \dots & \lambda_1\\
		\lambda_1 + 1 & \lambda_1 + 2 & \dots & \dots & \dots & \lambda_1 +
		\lambda_2 \\
		 \dots \\
		 \lambda_1 + \dots + \lambda_{n-1} + 1 & \dots & \dots & \dots & r
	\end{array}
\end{equation*}
and for each $i\in I(n,r)$, let $T^\lambda_i$ be the $\lambda$-tableaux
\begin{equation*}
	T_i^\lambda = 
	\begin{array}{cccccc}
		i_1 & i_2 & \dots & i_{\lambda_1}\\
		i_{\lambda_1 + 1} & i_{\lambda_1+2} & \dots & \dots & \dots &
		i_{\lambda_1 + \lambda_2}\\
		\dots \\
		i_{\lambda_1 +\dots \lambda_{n-1} + 1} & \dots & \dots & \dots &
		i_r.
	\end{array}
\end{equation*}
Recall that $(i,j)\in Y(n,r)$ if and only if $i_1\le i_2 \le \dots \le
i_r$ and $j_{\nu}\le j_{\nu+1}$ if $i_\nu = i_{\nu+1}$, $1\le \nu \le r-1$. 
Therefore there is a bijective correspondence
\begin{align*}
	\left\{\, \left( l\left( \lambda \right), j \right)\in Y\left(
	n,r \right) \,\middle|\,  j\in \mu \right\} & \to \tab^{rs}\left(
	\lambda, \mu
	\right)\\
	\left( l\left( \lambda \right), j \right) & \mapsto T^\lambda_j
\end{align*}
that in combination with \eqref{7.1} induces the bijection
\begin{equation}
	\label{9.1}
	D_{\lambda,\mu} \leftrightarrow \tab^{rs}\left( \lambda,\mu \right). 
\end{equation} 
Boltje and Maisch say that  a $\lambda$-tableaux in $\tab(\lambda, \mu)$ is
\emph{ascending} if, for every $a\in \mathbb{N}$, the $a$th row of this tableau contains
only entries which are greater than or equal to $a$. They denote the set of all
ascending elements of $\tab^{rs}\left( \lambda,\mu \right)$ by
$\tab^{\wedge}(\lambda, \mu)$. 
One has $\tab^\wedge\left( \lambda,\mu \right)\not=\varnothing$ if and only if
$\mu\ledom \lambda$, if and only if $T^{\lambda}_{l\left( \mu \right)}\in
\tab^\wedge(\lambda,\mu)$. 
Notice that for $j\in I(n,r)$, the $\lambda$-tableau $T^\lambda_j$ is ascending
if and only if $l\left( \lambda \right)\le j$. Therefore we have a bijective
correspondence
\begin{align*}
 Y\left( \lambda,\mu \right)^\wedge := \left\{\, \left( l\left( \lambda
	\right), j \right)\in Y\left(n,r \right) \,\middle|\,  j\in \mu,\ l\left( \lambda \right)\le j
	\right\} & \to \tab^\wedge\left( \lambda,\mu \right)\\
	(l(\lambda), j) & \mapsto T^\lambda_j. 
\end{align*}
Denote by $D^\wedge_{\lambda,\mu}$ the image of $Y(\lambda,\mu)^\wedge $ in
$D_{\lambda,\mu}$ under the correspondence~\eqref{7.1}. Boltje and Maisch define
for each $\mu\ledom \lambda$
\begin{equation*}
	\Hom^\wedge_{\hecke} (M^\mu, M^\lambda) := 
	\bigoplus_{d\in D^\wedge_{\lambda,\mu}} R\varphi^{\lambda,\mu}_d \subset
	\Hom_{\hecke} (M^\mu,M^\lambda). 
\end{equation*}
Then under the isomorphism~\eqref{8.1}, 
$\Hom^\wedge_{\hecke}(M^\mu,M^\lambda)$ corresponds to
\begin{equation*}
 \bigoplus\limits_{(l(\lambda), j)\in
 I^2(\lambda,\mu)^\wedge}R\xi_{l(\lambda),
 j}.\end{equation*}

But, since 
\begin{equation*}
	\left\{\, \xi_{l(\lambda), j} \,\middle|\, (l(\lambda), j) \in
	Y(n,r),\ l(\lambda)\le j,\ \lambda\in \Lambda \right\}
\end{equation*}
is an $R$-basis of $S_{R,q}^+(n,r)$ and for any $\nu$, $\tau\in \Lambda(n,r)$
\begin{equation*}
	\xi_{\nu} \xi_{ij} \xi_{\tau} = 
	\begin{cases}
		\xi_{ij}, & \mbox{if $i\in \nu$, $j\in \tau$}\\
		0 , &\mbox{otherwise,}
	\end{cases}
\end{equation*}
we get that $\Hom^\wedge_{\hecke}(M^\mu,M^\lambda)$ corresponds to
$\xi_\lambda S^+_q(n,r)\xi_\mu$. We saw previously that $S^+_{R,q}(n,r) = L
\oplus \ideal$. But if $\lambda\gtdom \mu$, we have $\xi_\lambda L \xi_\mu = 0$.
Hence
$\Hom^\wedge_{\hecke} (M^\mu, M^\lambda)$ corresponds to $\xi_\lambda \ideal
\xi_\mu$ if $\lambda \gtdom \mu$. 

Next we define the Boltje-Maisch complex $\widetilde\bmcom^\lambda_*$. We will
restrict ourselves to the case
when $\lambda$ is a partition of $r$.
 For every right $\hecke$-module $N$ the
$R$-module $\Hom_R(N,R)$ has the structure of  a left
$\hecke$-module given by 
\begin{equation*}
	(h\varepsilon)(n)  := \varepsilon (nh),
\end{equation*}
where $h\in \hecke$, $\varepsilon\in \Hom_R(N,R)$, and $n\in N$.
So given an $R$-module $N'$, the $R$-module $\Hom_R(N,R)\otimes_R N'$ can
be viewed as an $\hecke$-module via
\begin{equation*}
 h (\varepsilon \otimes n') = (h\varepsilon )\otimes n',
\end{equation*}
where $h\in \hecke$, $\varepsilon \in \Hom_R(N,R)$, and $n'\in
N'$.

For each $\lambda\in \Lambda^+(n,r)$, Boltje and Maisch define a
complex
\begin{equation*}
 \widetilde{C}^\lambda_*\colon
 0 \to C^{\lambda}_{a(\lambda)}
 \stackrel{d^\lambda_{a(\lambda)}}{\longrightarrow}
 C^\lambda_{a(\lambda)-1} 
  \stackrel{d^\lambda_{a(\lambda)-1}}{\longrightarrow}
 \dots 
 \stackrel{d^\lambda_1}{\longrightarrow}
 C_0^\lambda
 \stackrel{d^\lambda_0}{\longrightarrow}
 C^\lambda_{-1}\to 0
\end{equation*}
in the following way:
\begin{equation*}
 \bmcom^\lambda_{-1} = \Hom_{R}(S^\lambda,R), 
\end{equation*}
\begin{multline*}
 \bmcom^\lambda_k = \!\!\!\!
\bigoplus_{
\begin{smallmatrix}
	 \mu^{(1)}\gtdom \dots \gtdom \mu^{(k)}\gtdom \lambda\\
	 \mu^{(1)}, \dots , \mu^{(k)}\in \Lambda(n,r)
	\end{smallmatrix}
	}\!\!\!\!
	\Hom_{R}\left( M^{\mu^{(1)}}, R \right) \otimes_R 
	\Hom^\wedge_{\hecke} (M^{\mu^{(2)}}, M^{\mu^{(1)}})
      \\
	\otimes_R
       \cdots
     \otimes_R 
     \Hom^\wedge_{\hecke} (M^\lambda, M^{\mu^{(k)}}).
    \end{multline*}
The differential $d_k^\lambda\colon \bmcom_k^\lambda\to
\bmcom^\lambda_{k-1}$ is given by the sum
$\sum\limits_{t=0}^{k-1} (-1)^t d_{kt}$, where
for
$k\ge 1$ and $1\le t\le k-1$, we set
\begin{equation}
	\label{differential}
	\begin{aligned}
		d_{k0} (\varepsilon \otimes \phi_1 \otimes \dots\otimes  \phi_k)
	&= 
	\varepsilon \phi_1 \otimes \phi_2 \dots \otimes \phi_k,\\
	d_{kt} (\varepsilon \otimes \phi_1 \otimes \dots\otimes  \phi_k) &= 
	\varepsilon \otimes \phi_1 \otimes \dots \otimes \phi_t \phi_{t+1}\otimes \dots
	\otimes  \phi_k,
\end{aligned}
\end{equation}
	and $d_0\colon \Hom_{R}(M^\lambda, R)\to \Hom_R(S^\lambda,R)$ is
defined to be the restriction on $S^\lambda$.

Let us consider the resolution $B_\lambda$ of $W_\lambda$. Applying the Schur
functor to $B_\lambda$ we obtain the exact sequence $F(B_\lambda)$, where
	\begin{align*}
		F(B_\lambda)_{-1} &= \xi_\omega W_\lambda, & 
		F(B_\lambda)_0 & = \xi_\omega S_{R,q}(n,r) \xi_\lambda,
\end{align*}	
and for $k\ge 1$ the $\hecke$-module $F(B_\lambda)_k$ is given by
\begin{equation*}
\bigoplus_{
\begin{smallmatrix}
	 \mu^{(1)}\gtdom \dots \gtdom \mu^{(k)}\gtdom \lambda\\
 \mu^{(1)}, \dots , \mu^{(k)}\in \Lambda(n,r)
\end{smallmatrix}
}
\xi_\omega S_q(n,r) \xi_{\mu^{(1)}} \otimes_R 
\xi_{\mu^{(1)}} \ideal \xi_{\mu^{(2)}} \otimes_R 
\dots
\otimes_R 
\xi_{\mu^{(k)}} \ideal \xi_\lambda.
	\end{equation*}
Notice that, for $\mu\in\Lambda(n,r)
$
the subspace $\xi_\omega S_q(n,r)\xi_\mu$ corresponds
under~\eqref{8.1} to
$\Hom_{\hecke}(M^\mu, M^\omega)$. But $M^\omega = x_\omega
\hecke = \hecke$, since $\Sigma_\omega$ is the trivial group and
$x_\omega = \sum_{\pi\in \Sigma_\omega} T_\pi=T_\id$. Thus $\xi_\omega
S_{R,q}(n,r) \xi_\mu$ corresponds under~\eqref{8.1} to $\Hom_{\hecke}
(M^\mu, \hecke)$.
 Here we have that $\Hom_{\hecke} (M^\mu,
\hecke)$ is a $\hecke$-module by 
\begin{equation*}
	(h\psi)(m) = h\psi(m),
\end{equation*}
where $h\in \hecke$, $m\in M^\mu$, and $\psi\in \Hom_{\hecke} (M^\mu,
\hecke)$.

Thus we can write
\begin{align*}
	F(B_\lambda)_0 & =  \Hom_{\hecke}(M^{\lambda},
	\hecke),
\end{align*}
\begin{multline*}
F(B_\lambda)_k   = \!\!\!\! 
\bigoplus_{
\begin{smallmatrix}
	 \mu^{(1)}\gtdom \dots \gtdom \mu^{(k)}\gtdom \lambda\\
 \mu^{(1)}, \dots , \mu^{(k)}\in \Lambda(n,r)
\end{smallmatrix}}\!\!\!\!\!\!\!\!\!\!\!\!
\Hom_{\hecke}(M^{\mu^{(1)}}, \hecke)
\otimes_R 
	\Hom^\wedge_{\hecke} (M^{\mu^{(2)}}, M^{\mu^{(1)}})
      \\
	\otimes_R
       \cdots
     \otimes_R 
     \Hom^\wedge_{\hecke} (M^\lambda, M^{\mu^{(k)}}), 
\end{multline*}
and the differentials $d_k$ in $F(B_\lambda)$  
are the sums $\sum_{t=0}^{k-1}(-1)^t d_{kt}$, where the maps
$d_{kt}$ are defined analogously to~\eqref{differential}. 

We will prove that $F(B_\lambda)$ is isomorphic to the complex
$\widetilde\bmcom^\lambda_*$ in
non-negative degrees. Since $F(B_\lambda)$ is exact and
$\widetilde\bmcom^\lambda_*$ is exact in the
degrees $0$ and $-1$ by \cite[Theorems~4.2~and~4.4 ]{boltjemaish}, the isomorphism in degree $-1$ will follow.

To prove that $F(B_\lambda)_k\cong \bmcom^\lambda_k$ for $k\ge
0$, we start by 
showing that there is an isomorphisms of $\hecke$-modules
\begin{equation*}
	\mathfrak{F}_\mu\colon \Hom_{\hecke} (M^\mu, \hecke)  \to 
	\Hom_{R}(M^\mu, R)
\end{equation*}
such that for all $\nu\in \Lambda(n,r)$, $\psi\in \Hom_{\hecke} (M^\mu,
\hecke)$, $\varphi\in \Hom_{\hecke} (M^\nu, M^\mu)$, we have
$\mathfrak{F}_\nu(\psi\varphi) = \mathfrak{F}_\mu(\psi)\varphi$. 

We will prove this in a more general setting.
Let ${}^*\colon \hecke\to \hecke$ be the anti-automorphism of
$\hecke$ given by $T_\pi \mapsto T^*_\pi =T_{\pi^{-1}}$.
Let $M$ be any right
$\hecke$-module. By \cite[Theorem~2.6]{dipperjames} there is an
isomorphism of $R$-modules 
\begin{align*}
	\Hom_R(M,R) & \to \Hom_{\hecke} ( M, \hecke)\\
	\varphi & \mapsto \hat{\varphi},
\end{align*}
where
\begin{equation*}
	\hat{\varphi}(m) := 
	\sum_{\sigma \in \Sigma_r} q^{-\lng(\sigma)}
	\varphi(mT^*_{\sigma})T_\sigma.
\end{equation*}
The inverse of this isomorphism is the map
\begin{align*}
	\Hom_{\hecke} \left( M, \hecke \right) &\to \Hom_{R}(M, R)\\
	\psi & \mapsto \tilde{\psi},
\end{align*}
where $\tilde{\psi}(m)$ is the coefficient of $T_\id$ in the expansion 
\begin{equation*}
	\psi(m) = \sum_{\sigma \in \Sigma_r} a_\sigma T_\sigma, \ a_\sigma\in R.
\end{equation*}
 Consider the
symmetric associative bilinear form $f\colon \hecke\otimes
\hecke \to R$ (\cite[Lemma~2.2 and proof of Theorem~2.3]{dipperjames}) given by 
\begin{equation*}
f(T_\sigma, T_\pi) = 
\begin{cases}
	q^{\lng(\sigma)}, & \mbox{if $\sigma = \pi^{-1}$}\\
	0 , & \mbox{otherwise.}
\end{cases}
\end{equation*}
Note that we have
\begin{equation*}
	f(\sum_{\sigma\in \Sigma_r} a_\sigma T_\sigma, T_\id) = a_\id (T_\id, T_\id) =
	a_\id.
\end{equation*}
Thus for $m\in M$ we get $\tilde{\psi}(m) = f(\psi(m), T_\id)$. We will prove that
$\psi\to \tilde{\psi}$ is an $\hecke$-module homomorphism. Recall that
$\Hom_R(M,R)$ is a left $\hecke$-module by $(h\varphi)(m) =
\varphi(mh)$, where $h\in \hecke$, $m\in M$, $\varphi\in
\Hom_R(M,R)$, and $\Hom_{\hecke}(M,\hecke)$ is a left
$\hecke$-module by $(h\psi)(m) = h\psi(m)$, where $h\in \hecke$,
$\psi\in \Hom_{\hecke}(M,\hecke)$, $m\in M$. 
\begin{proposition}
	\label{second1}
	The map
	\begin{equation}
			\label{ffrak}	
			\begin{aligned}
					\mathfrak{F}_M\colon
					\Hom_{\hecke}(M,\hecke) & \to
					\Hom_R(M,R)\\
						\psi & \mapsto \tilde{\psi}
						\end{aligned}
					\end{equation}
		where $\tilde{\psi}(m) = f\left( \psi(m), T_\id \right)$ for $m\in M$, is
	an isomorphism of $\hecke$-modules.
\end{proposition}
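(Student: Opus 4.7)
The plan is to combine two ingredients already laid out in the excerpt and reduce everything to a short manipulation with the bilinear form $f$. First, Theorem~2.6 of~\cite{dipperjames} furnishes the inverse map $\varphi\mapsto\hat\varphi$ and shows that $\mathfrak{F}_M$ is a bijection of the underlying $R$-modules, so the only thing still to verify is that $\mathfrak{F}_M$ intertwines the left $\hecke$-actions on source and target. Second, $f$ is both symmetric and associative, i.e.\ $f(a,b)=f(b,a)$ and $f(ab,c)=f(a,bc)$ for all $a,b,c\in\hecke$ (\cite[Lemma~2.2]{dipperjames}). That is essentially all the machinery needed.

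For the equivariance, fix $h\in\hecke$, $\psi\in\Hom_\hecke(M,\hecke)$ and $m\in M$. Unwinding definitions, using the definition of the left $\hecke$-action on $\Hom_\hecke(M,\hecke)$, and associativity of $f$ with the tail $\psi(m)T_{\id}=\psi(m)$, I compute
\[
\mathfrak{F}_M(h\psi)(m)=f\bigl((h\psi)(m),T_{\id}\bigr)=f\bigl(h\,\psi(m),T_{\id}\bigr)=f\bigl(h,\psi(m)\bigr).
\]
On the other hand, the $\hecke$-action on $\Hom_R(M,R)$ precomposes with right multiplication on $M$, and $\psi$ is right $\hecke$-linear, so
\[
\bigl(h\,\mathfrak{F}_M(\psi)\bigr)(m)=\mathfrak{F}_M(\psi)(mh)=f\bigl(\psi(m)\,h,T_{\id}\bigr)=f\bigl(\psi(m),h\bigr),
\]
where the final equality again uses associativity. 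Symmetry of $f$ identifies the two expressions, yielding $\mathfrak{F}_M(h\psi)=h\,\mathfrak{F}_M(\psi)$, which upgrades the $R$-module isomorphism to an $\hecke$-module isomorphism.

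At the same point I would also record the compatibility $\mathfrak{F}_\nu(\psi\varphi)=\mathfrak{F}_\mu(\psi)\varphi$ announced just before the proposition, as this will be needed to compare the complexes $F(B_\lambda)$ and $\widetilde\bmcom^\lambda_*$; it is immediate by evaluating both sides at $n\in M^\nu$, producing $f\bigl(\psi(\varphi(n)),T_{\id}\bigr)$ either way. There is no substantive obstacle: the argument is a two-line application of the associative symmetric form $f$. The only place where one has to be careful is bookkeeping of the conventions — the $\hecke$-action on $\Hom_\hecke(M,\hecke)$ acts on $\psi(m)$ by left multiplication inside $\hecke$, whereas on $\Hom_R(M,R)$ it acts by precomposition with right multiplication on $M$ — but associativity of $f$ is precisely what reconciles the two.
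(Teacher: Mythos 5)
Your proof is correct and takes essentially the same approach as the paper: bijectivity comes from \cite[Theorem~2.6]{dipperjames}, and equivariance is verified by a chain of equalities using the associativity and symmetry of the bilinear form $f$ together with the right $\hecke$-linearity of $\psi$. The only cosmetic difference is that you compute the two sides separately and meet in the middle via symmetry, while the paper writes it as a single chain of nine equalities.
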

\begin{proof}
	Given $h\in \hecke_{r,q}$, $\psi\in \Hom_{\hecke} (M,
	\hecke)$, and $m\in M$, we have 
\begin{align*}
	\widetilde{h\psi}(m) &= f\left( (h\psi)(m), T_\id \right) =
	f(h\psi(m), T_\id) = f(h, \psi(m)T_\id)\\& = f(h, \psi(m)) = f(\psi(m), h) =
	f(T_\id, \psi(m)h) = f(T_\id, \psi(mh))\\& = f(\psi(mh), T_\id)  
	=\tilde{\psi}(mh) = (h\tilde{\psi})(m). 
\end{align*}
\end{proof}
\begin{proposition}
	\label{second2}
	Let $M$ and $N$ be right $\hecke$-modules. Then the following
	diagram is commutative
	\begin{equation*}
		\xymatrix@C3.5em{
		\Hom_{\hecke} (M, \hecke) \otimes
		\Hom_{\hecke}(N,M) 
		\ar[r]^{\mathfrak{F}_M\otimes \mathrm{id}} 
		\ar[d]_{\circ} & 
		\Hom_{R}(M,R)\otimes \Hom_{\hecke} (N,M) \ar[d]^{\circ}\\
		\Hom_{\hecke} (N, \hecke) \ar[r]^{\mathfrak{F}_N} & \Hom_R(N,R).
}
	\end{equation*}
	\begin{proof}
		Let $\varphi\colon N \to M$, $\psi\colon M\to \hecke$ be
		 homomorphisms of right $\hecke$-modules. Then for all
		$n\in N$
\begin{align*}
	\widetilde{\psi\varphi}(n) &= f(\psi\varphi(n), T_\id) =
	f(\psi(\varphi(n)), T_\id) = \tilde{\psi}(\phi(n)) =
	\tilde{\psi}\phi(n).
\end{align*}
Thus $\widetilde{\psi\varphi} = \tilde{\psi}\varphi$. 
	\end{proof}
\end{proposition}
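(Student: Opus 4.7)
The plan is to verify commutativity of the square by a direct pointwise calculation on an elementary tensor. Since both composite maps are $R$-linear in the tensor factors, it suffices to take a pure tensor $\psi \otimes \varphi$, with $\psi \in \Hom_{\hecke}(M, \hecke)$ and $\varphi \in \Hom_{\hecke}(N, M)$, and check that its two images in $\Hom_R(N, R)$ coincide as $R$-linear maps.

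Tracing the two paths around the diagram, going left then down one obtains $\psi \otimes \varphi \mapsto \psi \circ \varphi \mapsto \mathfrak{F}_N(\psi \circ \varphi) = \widetilde{\psi\varphi}$, while going right then down one obtains $\psi \otimes \varphi \mapsto \tilde{\psi} \otimes \varphi \mapsto \tilde{\psi} \circ \varphi$. I would then evaluate both at an arbitrary $n \in N$ using the defining formula $\mathfrak{F}_M(\psi)(m) = f(\psi(m), T_\id)$ from Proposition \ref{second1}. The first gives $\widetilde{\psi\varphi}(n) = f((\psi\circ \varphi)(n), T_\id) = f(\psi(\varphi(n)), T_\id)$, and the second gives $(\tilde{\psi} \circ \varphi)(n) = \tilde{\psi}(\varphi(n)) = f(\psi(\varphi(n)), T_\id)$. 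The two expressions agree on the nose, so the two images coincide in $\Hom_R(N, R)$ and the diagram commutes.

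There is essentially no obstacle here. What is being asserted is a naturality statement for the family of isomorphisms $\mathfrak{F}$ of Proposition \ref{second1} with respect to precomposition by $\hecke$-linear maps, and it is forced by the explicit formula, which involves only evaluation of $\psi$ at an element followed by extraction of the $T_\id$-coefficient via the bilinear form $f$. The only subtlety worth noting is that the left $\hecke$-module structure on $\Hom_{\hecke}(M, \hecke)$ is by post-multiplication on the target, whereas $\varphi$ acts by precomposition on the source of $\psi$; these two operations commute trivially, so the one-line verification above is all that is required.
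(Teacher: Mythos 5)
Your proof is correct and follows essentially the same route as the paper's: trace both paths around the square on a pure tensor, then reduce the equality to the pointwise identity $\widetilde{\psi\varphi}(n)=f(\psi(\varphi(n)),T_{\id})=\tilde\psi(\varphi(n))$ using the defining formula for $\mathfrak{F}$. Nothing further is needed.
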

Returning to our setting we will abbreviate $\mathfrak{F}_{M^\mu}$ to $\mathfrak{F}_\mu$. 
For each $k\ge 0$ define the map $\tau_k\colon F(B_{\lambda})_k \to \bmcom_{k,\lambda}$
to be the direct sum
\begin{equation*}
\tau_k := 	\bigoplus_{
\begin{smallmatrix}
	 \mu^{(1)}\gtdom \dots \gtdom \mu^{(k)}\gtdom \lambda\\
 \mu^{(1)}, \dots , \mu^{(k)}\in \Lambda(n,r)
\end{smallmatrix}}
\mathfrak{F}_{\mu^{(1)}} \otimes \id \otimes \dots \otimes \id.
\end{equation*}
Then $\tau_k$ is an isomorphism of
$\hecke$-modules for $k\ge 0$. From
Proposition~\ref{second2}, we get that for every $k\ge 1$ 
\begin{equation*}
 d_{k,0}\tau_k = \tau_{k-1} d_{k,0}.
\end{equation*}
Moreover it is obvious that  for all $k\ge 1$ and $1\le t\le k$
\begin{equation*}
 d_{k,t}\tau_k = \tau_{k-1} d_{k,t}.
\end{equation*}
Thus for all $k\ge 1$ we have $d_k \tau_k = \tau_{k-1} d_k$. This
shows that $\tau = (\tau_k)_{k\ge 1}$ is a chain transformation between
the truncated complexes $F(B_{\lambda})_{\ge 0}$ and
$\widetilde\bmcom_{\ge 0,\lambda}$. Since every $\tau_k$ is an
isomorphism of $\hecke$-modules, we get that $F(B_{\lambda})$
and $\widetilde\bmcom_{*,\lambda}$ are isomorphic in non-negative degrees
as promised.
The existence of an isomorphism in degree $-1$ follows from the commutative diagram
with  exact rows
\begin{equation*}
 \xymatrix{
 F(B_{\lambda})_1 \ar[r]^{d_{1}} \ar[d]_{\tau_1} & F(B_{\lambda})_0
 \ar[r]^{d_0} \ar[d]_{\tau_0} & F(W_\lambda)\ar[r]
 \ar@{-->}[d]^{\exists!}& 0 \\
 \bmcom_{1}^\lambda \ar[r]^{d_1} & \bmcom_0^\lambda \ar[r]^{d_0} &
 S^\lambda \ar[r] & 0.
 }
\end{equation*}
Thus we proved
\begin{theorem}
	\label{main:second_part}
	Let $\lambda$ be a partition of $r$. 
	 Then the complexes $\widetilde{\bmcom}^\lambda_*$  and $F\left( B_\lambda
	 \right)$ are isomorphic. In particular, $\widetilde\bmcom^\lambda_*$ is
	 an exact complex. 
\end{theorem}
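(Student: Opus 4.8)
The plan is to assemble, degree by degree, the isomorphisms $\mathfrak{F}_\mu$ of Proposition~\ref{second1} into a chain isomorphism $\tau\colon F(B_\lambda)\to\widetilde\bmcom^\lambda_*$. On the summand of $F(B_\lambda)_k$ (for $k\ge 0$) indexed by a chain $\mu^{(1)}\gtdom\dots\gtdom\mu^{(k)}\gtdom\lambda$, I would apply $\mathfrak{F}_{\mu^{(1)}}$ to the first tensor factor $\Hom_\hecke(M^{\mu^{(1)}},\hecke)$ and the identity to each remaining factor $\Hom^\wedge_\hecke(M^{\mu^{(i+1)}},M^{\mu^{(i)}})$; taking the direct sum over all such chains gives a map $\tau_k\colon F(B_\lambda)_k\to\bmcom^\lambda_k$. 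Since $\mathfrak{F}_{\mu^{(1)}}$ is an isomorphism of $\hecke$-modules (Proposition~\ref{second1}) and the identity maps are isomorphisms, each $\tau_k$ is an isomorphism of $\hecke$-modules.

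Next I would check that $\tau=(\tau_k)_{k\ge 0}$ commutes with the differentials. In either complex the differential is $\sum_{t=0}^{k-1}(-1)^t d_{kt}$, so it suffices to treat each $d_{kt}$ separately. For $1\le t\le k-1$ the map $d_{kt}$ only multiplies together two of the $\Hom^\wedge$-factors and leaves the first factor untouched, so $d_{kt}\tau_k=\tau_{k-1}d_{kt}$ is immediate. For $t=0$ the map $d_{k0}$ replaces the first two factors $\psi\otimes\varphi$ by the composite $\psi\varphi$ (and, for $k=1$, follows this by the restriction $d_0$), and the required identity $\mathfrak{F}_{\mu^{(2)}}(\psi\varphi)=\mathfrak{F}_{\mu^{(1)}}(\psi)\varphi$ is precisely the content of Proposition~\ref{second2}. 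Hence $d_k\tau_k=\tau_{k-1}d_k$ for all $k\ge 1$, so $\tau$ is an isomorphism of complexes in non-negative degrees.

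It then remains to promote this to an isomorphism in degree $-1$. Here I would use that $F(B_\lambda)$ is exact --- as already observed, applying the Schur functor $V\mapsto\xi_\omega V$ to the resolution $B_\lambda$ of $W_\lambda$ yields an exact complex --- so that $d_0\colon F(B_\lambda)_0\to F(W_\lambda)$ is surjective with kernel $\mathrm{im}\,d_1$; and that, by \cite[Theorems~4.2 and~4.4]{boltjemaish}, $\widetilde\bmcom^\lambda_*$ is exact in degrees $0$ and $-1$, so $d_0\colon\bmcom^\lambda_0\to\Hom_R(S^\lambda,R)$ is likewise surjective with kernel $\mathrm{im}\,d_1$. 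Thus $F(W_\lambda)$ and $\bmcom^\lambda_{-1}$ are each the cokernel of the relevant map $d_1$, and the isomorphism given by $(\tau_1,\tau_0)$ of the truncated complexes, being compatible with $d_1$, descends to a unique isomorphism $\tau_{-1}\colon F(W_\lambda)\to\Hom_R(S^\lambda,R)$ fitting into the diagram with exact rows. This yields $F(B_\lambda)\cong\widetilde\bmcom^\lambda_*$, and the exactness of $\widetilde\bmcom^\lambda_*$ then follows from the exactness of $F(B_\lambda)$.

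The substantive content is all in the preparations preceding the statement: the combinatorial bijection~\eqref{7.1}, the algebra isomorphism~\eqref{8.1} from~\cite{SD25}, and the construction of the $\mathfrak{F}_M$ together with Propositions~\ref{second1} and~\ref{second2}. Given those, the only step that needs a little care is the degree $-1$ matching, where one cannot simply identify the last terms by a formula but must feed in the Boltje--Maisch low-degree exactness on one side and the exactness of all of $F(B_\lambda)$ on the other; after that the diagram chase is routine.
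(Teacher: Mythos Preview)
Your proposal is correct and follows essentially the same route as the paper: define $\tau_k$ as the direct sum of $\mathfrak{F}_{\mu^{(1)}}\otimes\id\otimes\cdots\otimes\id$, use Proposition~\ref{second1} for the $\hecke$-module isomorphism and Proposition~\ref{second2} for compatibility with $d_{k0}$, note the $d_{kt}$ with $t\ge 1$ are trivial, and then recover the degree $-1$ isomorphism from the cokernel diagram using the exactness of $F(B_\lambda)$ together with the low-degree exactness of $\widetilde\bmcom^\lambda_*$ from \cite{boltjemaish}. The only quibble is your parenthetical ``(and, for $k=1$, follows this by the restriction $d_0$)'': the map $d_{10}$ is just composition $\psi\otimes\varphi\mapsto\psi\varphi$ landing in degree $0$, with no restriction involved; the restriction $d_0$ is the separate map from degree $0$ to degree $-1$, handled afterwards by the cokernel argument.
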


\bibliographystyle{amsalpha}
\bibliography{q}

\end{document}